\newcommand{\cI}{\mathcal{I}}
\newcommand{\cJ}{\mathcal{J}}
\newcommand{\cO}{\mathcal{O}}
\newcommand{\cS}{\mathcal{S}}
\newcommand{\cM}{\mathcal{M}}
\newcommand{\cC}{\mathcal{C}}
\newcommand{\cG}{\mathcal{G}}
\newcommand{\Lbr}{\Lambda^{\mathbf r}}
\newcommand{\Lbp}{\Lambda^{\mathbf p}}
\newcommand{\Lbo}{\Lambda^{\mathbf o}}
\newcommand{\vt}{v_{top}}
\newcommand{\at}{\alpha_{top}}
\newcommand\dm{\mathbb D_m}
\newcommand{\yddm}{{}^{\dm}_{\dm}\mathcal{YD}}
\newcommand{\oV}{\overline{V}}
\newcommand{\oW}{\overline{W}}
\newcommand{\oZ}{\overline{Z}}
\newcommand{\oU}{\overline{U}}
\renewcommand{\_}[1]{_{\left( #1 \right)}}
\newcommand{\com}{{\Delta}}
\newcommand{\ot}{{\otimes}}
\newcommand{\ku}{\Bbbk}
\newcommand\fL{\mathsf{L}}
\newcommand\fM{\mathsf{M}}
\newcommand\fN{\mathsf{N}}
\newcommand\fR{\mathsf{R}}
\newcommand\fS{\mathsf{S}}
\newcommand\fm{\mathsf{m}}
\newcommand\fn{\mathsf{n}}
\newcommand{\Z}{{\mathbb Z}}
\newcommand{\N}{{\mathbb N}}
\newcommand{\C}{{\mathbb C}}
\newcommand{\DD}{{\mathbb D}}
\newcommand{\D}{\mathcal{D}}
\newcommand{\II}{\mathcal{I}}
\newcommand{\BV}{{\mathfrak B}}
\newcommand{\lieg}{{\mathfrak g}}
\newcommand{\lieb}{{\mathfrak b}}
\newcommand{\lien}{{\mathfrak n}}
\newcommand{\ydh}{{}^H_H\mathcal{YD}}
\newcommand{\Irr}{\operatorname{Irr}}
\newcommand{\Inf}{\operatorname{Inf}}
\newcommand{\Ind}{\operatorname{Ind}}
\newcommand{\Res}{\operatorname{Res}}
\newcommand\sgn{\operatorname{sgn}}
\newcommand\rk{\operatorname{rk}}
\newcommand\ad{\operatorname{ad}}
\newcommand\op{\operatorname{op}}
\newcommand\id{\operatorname{id}}
\newcommand\soc{\operatorname{soc}}
\newcommand\head{\operatorname{top}}
\newcommand\mm[1]{\boldsymbol{|#1\rangle}}
\newcommand\e{\varepsilon}
\theoremstyle{plain}
\newtheorem{lema}{Lemma}[section]
\newtheorem{theorem}[lema]{Theorem}
\newtheorem{cor}[lema]{Corollary}
\newtheorem{prop}[lema]{Proposition}
\newtheorem{claim}[lema]{Claim}
\newtheorem{question-app}{Question}
\theoremstyle{definition}
\newtheorem{definition}[lema]{Definition}
\newtheorem{exa}[lema]{Example}
\theoremstyle{remark}
\newtheorem{obs}[lema]{Remark}
\newtheorem{notation}[lema]{Notation}
\begin{document}

\title[]{Simple modules of small quantum groups\\
at dihedral groups}

\author[G. A. Garc\'ia, C. Vay ]{Gast\'on Andr\'es Garc\'ia and Cristian Vay}

\address{\newline\noindent Departamento de Matem\'atica, Facultad de Ciencias Exactas,
Universidad Nacional de La Plata. CMaLP -- CONICET, (1900) La Plata, Argentina.}
\email{ggarcia@mate.unlp.edu.ar}

\address{\newline\noindent Facultad de Matem\'atica, Astronom\'\i a, F\'\i sica y Computaci\'on,
Universidad Nacional de C\'ordoba. CIEM -- CONI\-CET. Medina Allende s/n, 
Ciudad Universitaria 5000 C\'ordoba, Argentina}
\email{cristian.vay@unc.edu.ar}

\thanks{\noindent 2010 \emph{Mathematics Subject Classification.} 16T20, 17B37, 22E47, 17B10.}

\thanks{The work of C. V. was partially supported by CONICET, Secyt (UNC) and Foncyt PICT 2016-3927. The work of
G. A. G. was partially supported by CONICET, Secyt (UNLP) and Foncyt PICT 2016-0147.}

\begin{abstract}
Based on previous 
results on the classification of finite-dimensional Nichols algebras over dihedral groups and the characterization of simple modules of Drinfeld doubles, 
we compute the irreducible characters of the Drinfeld doubles of bosonizations of finite-dimensional Nichols algebras over 
the dihedral groups $\DD_{4t}$ with $t\geq 3$. To this end, we develop new techniques that can be applied to Nichols algebras 
over any Hopf algebra. Namely, we explain how to construct recursively irreducible representations when the Nichols algebra is generated by a 
decomposable module, and show that the highest-weight of minimum degree in a Verma module determines its socle. 
We also prove that tensoring a simple module by a rigid simple module gives a semisimple module.
\end{abstract}

\maketitle



\section{Introduction}

This paper is devoted to study the representations
of certain families of Hopf algebras $\D(V,\dm)$, which are given by
Drinfeld doubles of bosonizations of finite-dimensional Nichols algebras $\BV(V)$ over 
dihedral groups $\dm$ of order $2m$ with $m=4t\geq 12$. The Hopf algebras $\D(V,\dm)$ might be considered as analogs of small quantum groups but with 
non-abelian torus. This election is based on the classification result in \cite{FG} of 
all finite-dimensional Nichols algebras over $\dm$. In particular, $V$ belongs to an infinite family of reducible 
Yetter-Drinfeld modules over $\dm$ and $\BV(V)\simeq\bigwedge V$.

The small quantum groups or Frobenius-Lusztig kernels $u_{q}(\lieg)$ are 
finite dimensional quotients of quantum universal enveloping algebras $U_{q}(\lieg)$ at a root of unity $q$ for $\lieg$
a semisimple complex Lie algebra \cite{L}, with some restrictions on the order $\ell$ of $q$ depending on the type of $\lieg$.
As it is well-known, $U_{q}(\lieg)$ can be described as a quotient 
of the Drinfeld double of the quantum group 
$U_{q}(\lieb)$ associated with a standard Borel subalgebra $\lieb$ of $\lieg$.
Consequently, $u_{q}(\lieg)$ can also be described as a quotient of the Drinfeld double of the small quantum 
group $u_{q}(\lieb)$. The latter is a pointed Hopf algebra over the abelian group $\Z_{\ell}^{n}$ with $n= \rk \lieg$. 
As such, it is isomorphic to the smash product or \textit{bosonization} $u_{q}(\lieb) \simeq u_{q}(\lien)\# \C \Z_{\ell}^{n}$
and $u_{q}(\lien)$ is a Nichols algebra of diagonal type \cite{Ro}, \cite{AS}, \cite{A}. In fact, 
as a consequence of the classification theorem due to Andruskiewitsch and Schneider \cite{AS2}, 
under mild conditions on the order of the groups, 
all complex finite-dimensional pointed Hopf algebras over abelian groups are variations of $u_{q}(\lieb)$. 
In these notes, we consider objects analogous to the small quantum groups $u_{q}(\lieb)$ but containing 
a non-abelian torus. These pointed Hopf algebras are given by
bosonizations of finite-dimensional Nichols algebras over the infinite family of dihedral 
groups $\dm$ classified in \cite{FG}. These Nichols algebras actually turn out to be exterior algebras over 
semisimple objects in the braided category $\yddm$. See Section \ref{sec:dihedral groups} for more details.

More generally, one may consider the Drinfeld double $\D(V,H):=\D(\BV(V)\#H)$ of the 
bosoni\-zation of a finite-dimensional Nichols algebra $\BV(V)$ over a finite-dimensional Hopf algebra $H$. 
These kind of generalized small quantum groups admits a triangular decomposition $\BV(V)\ot\D(H)\ot\BV(\oV)$ 
in the sense of Holmes and Nakano \cite{HN}, \cite{BT}. Thus, as in the classical context, the simple modules can be obtained as quotients 
of generalizations of Verma modules and consequently are classified by their \textit{highest-weights}. Here, the {\it weights} are the simple representations of 
the Drinfeld double $\D(H)$, which plays the role of the Cartan subalgebra. 
In case $H=\Bbbk \Gamma$ is a group algebra of an abelian group, the weights are one-dimensional and there are several results known, 
see for example \cite{AAMR}, \cite{Ch}, \cite{KR1, KR2}, \cite{HY}.
However, in case $\Gamma$ is not an abelian group, the weights are not necessarily one-dimensional and the computations turn out to be more involved. 
In this case, the description of the simple modules is only known for $\BV(V)=\mathcal{E}_{3}$ being the Fomin-Kirillov algebra over the symmetric group 
$\mathbb{S}_3$ \cite{PV}. It is worth pointing out that the category of graded modules can be endowed with an structure of 
\textit{highest-weight category} when $H$ is semisimple. 

In conclusion, with the classification of the irreducible representations at hand, 
the central problem to address is to compute their characters, {\it i.e.} their weight decomposition. 
Our main contribution is the solution to this problem for small quantum groups at the dihedral groups $\dm$, see Theorem \ref{thm:simple-D(M_I)-modules}. 
On the way, we develop new techniques and provide results that can be applied to Nichols algebras over any Hopf algebra.

Specifically, we develop a recursive process based on different triangular decompositions and
bosonizations when $V$ is reducible: If $V=U\oplus W$, then the Nichols algebra $\BV(V)$ can be described as a 
\textit{braided} bosonization $\BV(Z)\#\BV(U)$ for certain module $Z$ associated with the adjoint action 
of $U$ on $W$, and $\BV(V)\#H \simeq \BV(Z)\#\big(\BV(U)\#H\big)$ as Hopf algebras, see \cite{AA}, \cite{AHS}, \cite{HS}.
In this situation, we construct first the simple modules of $\D(U,H)= \D(\BV(U)\#H\big)$ from the simple $\D(H)$-modules -- the weights-- and then
use the same proceeding to construct those of $\D(V,H)$ from the former.
To use this tool, we need first to prove some technical results on composition of certain functors that assure that our recursive process gives the desired answer.

We would like to emphasize other two new results which might help to describe the simple modules of generalized small quantum groups. 
First, it is known that the simple modules can also be obtained as the socle of the Verma modules \cite[Theorem 2]{PV};
we show in Corollary \ref{cor:highest weight of the socle} that the socle of a Verma module is generated by its highest-weight of minimum degree. 
Second, a simple $\D(V,H)$-module is said to be {\it rigid} if it is also simple as $\D(H)$-module;
we prove in Theorem \ref{thm:rigid tensor} that the tensor product between a simple module and a rigid simple module is semisimple.

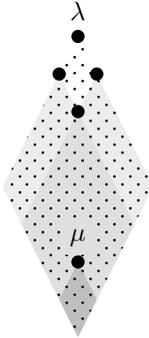
\begin{figure}[!h]
\begin{tikzpicture}

\fill[fill=gray!15] (.25,1.5) -- (1,0) -- (0,-2) -- (-.75,-.5) -- (.25,1.5);
;

\fill[fill=gray!15] (-.25,1.5) -- (-1,0) -- (0,-2) -- (.75,-.5) -- (-.25,1.5);
;

\node[circle,fill=black,inner sep=0pt, minimum width=5pt,label=above:{$\lambda$}] at (0,2) {};

\node[circle,fill=black,inner sep=0pt, minimum width=5pt] at (-.25,1.5) {};

\node[circle,fill=black,inner sep=0pt, minimum width=5pt] at (.25,1.5) {};

\fill[fill=gray!25] (0,1) -- (-.75,-.5) -- (0,-2) -- (.75,-.5) -- (0,1);

\node[circle,fill=black,inner sep=0pt, minimum width=5pt] at (0,1) {};

%


\fill[fill=gray!50] (0,-1) -- (-.25,-1.5) -- (0,-2) -- (.25,-1.5) -- (0,-1);
;

%


\fill[pattern=my crosshatch dots] (0,2) -- (1,0) -- (0,-2) -- (-1,0) -- (0,2);

\node[circle,fill=black,inner sep=0pt, minimum width=5pt,label={[fill=gray!25]above:$\mu$}] at (0,-1) {};

\end{tikzpicture}
\caption{The Verma module $\fM_H^V(\lambda)$. The (big) dots represent their (highest-)weights. 
The shadow regions indicate submodules generated by highest-weights. 
In particular, the region on the bottom is the socle $\fS^{V}_H(\lambda)$ which is generated by the highest-weight of minimum degree. 
The white region on the top depicts its unique simple quotient $\fL^{V}_H(\lambda)$.}
\label{figure:intro}
\end{figure}

As the category ${}_{\D(V,\dm)}\cM$ is non-semisimple, this is a first step towards a complete description of it. Nevertheless, 
taking into account that the main results needed lots of computations, we prefer to present the result on simple modules
first and leave the study of the indecomposable modules, tensor products and extensions for future work. 
Finally, we point out that the description of the simple modules over $\D(V,H)$ can also be seen as a first step for finding
new finite-dimensional Hopf algebras by using the generalized lifting method, see for instance \cite{AA}.

The paper is mostly self-contained and includes figures to lighten the reading. It is organized as follows. 
In the preliminaries we collect definitions, notation and basic facts that are used along the paper. 
In Section \ref{sec:general framework} we recall the general framework of Hopf algebras with 
triangular decomposition and present the new results mentioned above, cf. Corollary \ref{cor:highest weight of the socle} and
Theorems \ref{thm:rigid tensor} and \ref{thm:simples W op V}. In Section \ref{sec:dihedral groups} 
we summarize some facts about the category of $\D(\dm)$-modules. We list the simple $\D(\dm)$-modules 
and compute some tensor products between them. Dealing with these tensor products is one of the main 
issues when the corresponding weights are not one-dimensional. We also recall the classification due to \cite{FG} 
of the finite-dimensional Nichols algebras $\BV(V)$ over $\dm$, cf. Theorem \ref{thm:all-nichols-dm}. 
We fix a lighter notation to work with their Drinfeld doubles $\D(V,\dm)$ in the last section and characterize those which are spherical, 
see Theorem \ref{thm:DrinfeldDoubleSpherical}. Finally, we compute in Section \ref{sec:simple-MI-modules} 
the weight decomposition of the simple $\D(V,\dm)$-modules for all $V$ in the classification of \cite{FG}, 
see Theorem \ref{thm:simple-D(M_I)-modules}; we first consider the case when $V$ is simple in \S \ref{subsec:MIsinglecase},
and then prove the recursive step in \S\ref{subsec:MIsemisimple}.

\section*{Acknowledgements}
This work was partially done during a short stay of C.V. at UNLP as visiting professor. 
The authors would like to thank the referee for the careful reading and all the suggestions 
that helped to improve the exposition of the paper.

\section{Preliminaries}
In this section we introduce notation and recall some basic results that are needed along the paper.

\subsection{Conventions}\

We work over an algebraically closed field $\Bbbk$ of
characteristic zero. All vector spaces are considered over $\Bbbk$ and
$\ot = \ot_{\Bbbk}$.
For $n\in\N$, we denote by $\Z_n$ the ring of integers module $n$. We use the same letter to indicate an integer and its class in $\Z_n$. 
Through the work graded means $\Z$-graded. 
Let $\fN=\oplus_{n\in\Z}\fN_n$ be a graded vector space. For $i\in \Z$, 
the shift $\fN[i]$ of $\fN$ is the same vector space $\fN$ but with shifted grading, where 
$\fN[i]_n=\fN_{n-i}$ as homogeneous component of degree $n$.

We work with Hopf algebras $H$ over $\Bbbk$. As usual, we denote the comultiplication by $\com$, the 
antipode by $\cS$ and the counit by $\e$. The comultiplication 
is written using Sweedler's sigma notation without the summation symbol, \textit{i.e.}
$\Delta(h)=h\_{1}\ot h\_{2}$ for all $h\in H$. Analogously, for a left $H$-comodule $(V,\lambda)$ 
we write $\lambda(v)= v_{(-1)}\ot v_{(0)} \in H\ot V$ for all $v\in V$ to denote its coaction.
We refer to \cite{Rad} 
for basic and well-known results in the theory.

Throughout these notes, we make use of the triangular decomposition associated with 
finite-dimensional graded algebras as introduced by Holmes and Nakano \cite{HN}. Here we apply it to  
Drinfeld doubles of finite-dimensional  Hopf algebras.   
A graded algebra 
$A= \bigoplus_{n\in \Z} A_{n}$ admits a \textit{triangular decomposition} 
if there exist graded subalgebras $A^{-}$, $T$ and $A^{+}$
such that the multiplication $m: A^{-}\ot T \ot A^{+} \to A$ gives a linear isomorphism and 
\begin{enumerate}
 \item[(td1)] $A^{\pm} \subseteq \bigoplus_{n\in \Z_{\pm}} A_{n}$ and $T\subseteq A_{0}$;
 \item[(td2)] $(A^{\pm})_{0} = \Bbbk$;
 \item[(td3)] $B^{\pm}:=A^{\pm} T = T A^{\pm}$.
\end{enumerate}
In our situation, this coincides with \cite[Definition 3.1]{BT} as 
$T$ is a split $\Bbbk$-algebra because of our assumptions on $\Bbbk$. 
We denote by ${}_A\cM$ the category of finite-dimensional left $A$-modules and by 
${}_A\cG$ the category of the graded ones with morphisms preserving the grading. 
We write ${}_A\cC$ to refer to either one of these categories. 
We denote by $\Irr{}_{A}\cC$ a complete set of non-isomorphic simple objects in ${}_{A}\cC$.

\section{The general framework}\label{sec:general framework}

We outline in this section the general framework of our study. 
Eventually, we will be more explicit in the successive subsections according to our convenience. 
For more details we refer the reader to \cite{AS}, \cite{BT} and \cite{V}.

\subsection{Drinfeld doubles of bosonization of Nichols algebras}\label{subsec:Boson-Nichols-Drinfeld}\
 Fix $H$ a finite-dimensional Hopf algebra. 

\subsubsection{Nichols algebras} \label{subsubsec:Def-Nichols} 

A left Yetter-Drinfeld module over $H$ is a left 
$H$-module $(V,\cdot)$ and a left $H$-comodule $(V,\lambda)$ that satisfies the compatibility condition
$$ 
\lambda(h\cdot v) = h_{(1)}v_{(-1)}\cS(h_{(3)})\ot h\_{2}\cdot v_{(0)}\qquad \text{ for all }h\in H, v\in V.
$$
The finite-dimensional left Yetter-Drinfeld modules over $H$ together with morphisms of left $H$-modules and left $H$-comodules 
form a braided rigid tensor category denoted by $\ydh$.
The braiding is given by $c_{V,W}(v\ot w) = v_{(-1)}\cdot w \ot v_{(0)}$ for all $v\in V$, $w\in W$ with $V$, $W$ objects in $\ydh$.

Let $V \in \ydh$. Then, the tensor algebra $T(V)$ is a graded braided
Hopf algebra in $\ydh$. Roughly speaking, it satisfies the axioms of a Hopf algebra but the structural maps are morphism in the category. 
For instance, its comultiplication is determined by setting 
$\com(v) = v\ot 1 + 1 \ot v$ for all $v\in V$. 

The \textit{Nichols algebra} $\BV(V) = \bigoplus_{n\geq 0} \BV^{n}(V)$ of $V$ is the graded braided Hopf algebra in $\ydh$ defined by the quotient
$\BV(V) = T(V) /\cJ(V)$,
where $\cJ(V)$ is the largest Hopf ideal of $T(V)$ generated as an ideal by homogeneous elements of degree greater
than or equal to $2$. By definition, we have that 
$\BV^{0}(V) = \Bbbk$ and $\BV^{1}(V) = V$.
In case $\BV(V)$ is finite-dimensional, we denote by $n_{top}$ its maximum degree. It is well-known that 
$\lambda_V:=\BV^{n_{top}}(V)$ is one-dimensional (in fact, $\BV(V)$ satisfies the Poincar\'e duality);
in particular, it is a simple $H$-module and $H$-comodule.
A fixed linear generator $v_{top} \in \BV^{n_{top}}(V)$ is usually called a \textit{volume element}.
We refer to \cite{A} for more details on Nichols algebras.

\subsubsection{}\label{subsec:Bosonization}

The {\it bosonization} of $\BV(V)$ over $H$ is
a usual Hopf algebra whose underlying 
vector space is 
\begin{align*}
\BV(V)\#H:=\BV(V)\ot H 
\end{align*}
and it is endowed with a Hopf algebra structure which is a sort of semidirect product. 
It is generated by $V$ and $H$ as an algebra, whereas its multiplication and 
comultiplication are completely determined by
\begin{align*}
hv& =   (h_{(1)}\cdot v )\# h_{(2)}\quad\mbox{and}\quad
\Delta(v)= v\ot1+v\_{-1}\ot v\_{0}
\end{align*}
for all $v\in V$ and $h\in H$. 
In particular, $H=1\#H$ is a Hopf subalgebra and 
$\BV(V)=\BV(V)\#1$ is a subalgebra which coincides with the subalgebra of left coinvariants
associated with the projection $\BV(V)\#H \to H$ given by $\pi(b\#h) = \varepsilon(b)h$ for all 
$b\in \BV(V)$ and $h\in H$.
Note that the adjoint action of $H$ on $V$ coincides with the action as Yetter-Drinfeld module. 
That is, for all $h\in H$ and $v\in V$ we have that 
$\ad(h)(v)=h\_{1}v\cS(h\_{2})=(h\_{1}\cdot v)\#h\_{2}\cS(h\_{3})=h\cdot v$.

\subsubsection{}\label{subsec:Drinfeld double} 
The {\it Drinfeld double of $H$} is the Hopf algebra defined on the vector space 
\begin{align*}
\D(H):=H\ot H^*
\end{align*}
in such a way that $H=H\ot1$ and $H^{*\op}=1\ot H^{*\op}$ are Hopf subalgebras,
and the elements $h\in H$ and $f\in H^{*}$ obey the multiplication rule
\begin{align*}
fh=\langle f\_{1},h\_{1}\rangle\langle f\_{3},\cS(h\_{3})\rangle h\_{2}f\_{2}.
\end{align*}
By convention we write $hf=h\ot f$ for all $h\in H$, $f\in H^{*}$, cf. \cite[Theorem 7.1.1]{M}. 

The Drinfeld double $\D(H)$ is a quasitriangular Hopf algebra with $R$-matrix given by
$R=\sum_i f_i\ot h_i \in \D(H)\ot \D(H)$, where $\{h_i\}\subset H$ and $\{f_i\}\subset H^*$ are 
dual bases of $H$ and $H^{*}$, respectively. Also, it holds that $R^{-1}=\sum_i \cS(f_i)\ot h_i$. 
Thus, the category $_{\D(H)}\cM$ of $\D(H)$-modules is braided with braiding 
$c_{M,N}(m\ot n)=\sum_i (h_i\cdot n)\ot (f_i\cdot m)$ for all $m\in M$, $n\in N$ with $M,N\in{}_{\D(H)}\cM$. 
As the braiding is invertible, one may consider also ${}_{\D(H)}\cM$  as braided category with braiding $c^{-1}$.

It is possible to relate the categories $\ydh$, $_{\D(H)}\cM$ and $_{\D(H)}^{\D(H)}\mathcal{YD}$ by means of the following functors:
\begin{align*}
\xymatrix{
\ydh
\ar@{->}^{F}[rr]
&&
\left(_{\D(H)}\cM,c\right)
\ar@{->}^{F_R}[rr]
&&
_{\D(H)}^{\D(H)}\mathcal{YD}\\
_{H^{*\op}}^{H^{*\op}}\mathcal{YD}
\ar@{->}^{\bar{F}}[rr]
&&
\left(_{\D(H)}\cM,c^{-1}\right)
\ar@{->}_{F_{R^{-1}}}[rru]
&&
}
\end{align*}
Explicitly, $F$ is the braided equivalence which transforms $M\in \ydh$ 
into a $\D(H)$-module with  the action
\begin{align*}
(hf)\cdot m=\langle f,m\_{-1}\rangle h\cdot m\_{0},
\end{align*}
for all $h\in H$, $f\in H^*$ and $m\in M$. Analogously, $\bar{F}$  
transforms $M\in{}_{H^{*\op}}^{H^{*\op}}\mathcal{YD}$ into a $\D(H)$-module with the action
\begin{align*}
(hf)\cdot m=\langle (f\cdot m)\_{-1},\cS(h) \rangle (f\cdot m)\_{0}.
\end{align*}

The functors $F_R$ and $F_{R^{-1}}$ 
are both fully faithful and are defined as follows: 
For $M \in\, _{\D(H)}\cM $, the object $F_R(M)$ (resp. $F_{R^{-1}}(M)$) coincides with $M$ as $\D(H)$-module, 
whereas the $\D(H)$-coaction
is provided by the action of the $R$-matrix $R$ (resp. $R^{-1}$) as follows
\begin{equation}\label{eq:coactionsD(H)}
m\_{-1}\ot m\_{0}=\sum_ih_i\ot(f_i\cdot m)\quad\left(\mbox{resp.}\quad \sum_{i}\cS(f_i)\ot(h_i\cdot m)\right) 
\end{equation}
for all $m\in M$. 

Note that the braidings of $V$ and $F_R\circ F(V)=V$ coincide as linear maps. 
This implies that the Nichols algebras $\BV(V)$ and $\BV(F_R\circ F(V))$ are isomorphic as braided Hopf algebras \cite{T}. 
In particular, they are isomorphic as algebras and coalgebras. In the sequel, we identify both Nichols algebras
and consider $\BV(V) \in{}_{\D(H)}^{\D(H)}\mathcal{YD}$.

Let $V^*$ be the $\D(H)$-module dual to $V$. We define
\begin{align*}
\oV= F_{R^{-1}}(V^*)\in{}_{\D(H)}^{\D(H)}\mathcal{YD}. 
\end{align*}

As above, the Nichols algebras 
$\BV(\oV)$ in ${}_{\D(H)}^{\D(H)}\mathcal{YD}$ and 
$\BV(V^{*})$ in $({}_{\D(H)}\cM,c^{-1})$ are isomorphic as braided Hopf algebras. 
Besides, there is an isomorphism of $\D(H)$-module algebras
\begin{align}\label{eq:nichols de oV}
\BV(\oV)\simeq\BV(F_R(V^*))
\end{align}
where the algebra on the right-hand side is the Nichols algebra of $F_R(V^*)$ in 
${}_{\D(H)}^{\D(H)}\mathcal{YD}$, or equivalently the Nichols algebra of $V^*$ in $({}_{\D(H)}\cM,c)$. 
This follows from \cite[Lemma 1.11]{AHS}, see also \cite[(4.9)]{V}.

Actually, one may consider $V^{*}\in\, _{H^{*\op}}^{H^{*\op}}\mathcal{YD}$  with action and coaction defined by
\begin{align*}
\langle f\cdot \alpha,x\rangle=\langle f,\cS^{-1}(x\_{-1})\rangle\langle \alpha,x\_{0}\rangle\quad\mbox{and}\quad
\langle \alpha,h\cdot x\rangle=\langle \alpha\_{-1},h\rangle\langle \alpha\_{0},x\rangle
\end{align*}
for all $f\in H^{*op}$, $h\in H$, $x\in V$ and $\alpha\in V^*$. Then 
$\bar{F}(V^*)$ is the dual object of $V$ as $\D(H)$-module and hence $\oV=F_{R^{-1}}\circ\bar{F}(V^*)$.

The Nichols algebras $\BV(\oV)$ and $\BV(V)$ in ${}_{\D(H)}^{\D(H)}\mathcal{YD}$ play a central role in the rest of the paper. 
In case they are finite-dimensional, they 
are related by the fact that there is an isomorphism of Hopf algebras $(\BV(V)\# H)^{*\op} \simeq \BV(\oV)\#H^{*\op}$ 
(adapt the proof of \cite[Lemma 5]{PV}).

\subsubsection{A generalized quantum group} \label{subsub:generalized qg}

From now on, we denote by $\D(V, H)$ the Drinfeld double of $\BV(V)\#H$ with $\BV(V)$ a finite-dimensional 
Nichols algebra in $\ydh$. We 
describe its Hopf algebra structure following \cite[Lemma 4.3]{V}. 

From the very definition of the Drinfeld double, it is possible to show that
$$
\D(V, H)\simeq\BV(V)\ot H\ot(\BV(V)\ot H)^*\simeq\BV(V)\ot H\ot\BV(\oV)\ot H^*
$$
as vector spaces. 
Via this isomorphism, we may assume that $\D(V ,H)$ is generated as an 
algebra by the elements of $V$, $\oV$, $H$ and $H^*$. Henceforth, 
the Hopf algebra structure of $\D(V ,H)$ is completely determined by the following features:
\begin{enumerate}[label=\rm{(\alph*)}]
 \item The subalgebra generated by $H$ and $H^*$ is a Hopf subalgebra isomorphic to $\D(H)$.
 \item  The subalgebra generated by $V$ (resp. $\oV$) and $\D(H)$ is isomorphic to the bosonization 
 \begin{align*}
 \D^{\leq0}(V, H):=\BV(V)\#\D(H)\quad\left(\mbox{resp.}\quad\D^{\geq0}(V, H):=\BV(\oV)\#\D(H)\right).
 \end{align*}
In particular, both $\D^{\leq0}(V, H)$ and $\D^{\geq0}(V, H)$ are Hopf subalgebras.
 \item  If $v\in V$ and $\alpha\in\oV$, then
 \begin{align}\label{eq:commutation rules gral}
 \alpha v= (\alpha\_{-1}\cdot v)\alpha\_{0}+\Phi_{\alpha,v}
 \end{align}
where $\Phi_{\alpha,v}=\langle \alpha,v\rangle-\alpha\_{-1}v\_{-1}\langle \alpha\_{0},v\_{0}\rangle  \in\D(H)$
\footnote{This formula is a revised version of \cite[$(4.5)$]{V}, see Equation (4.5) of arXiv:1808.03799v2.}.
 %
\end{enumerate}

It turns out that $\D(V ,H)$ is a graded Hopf algebra with grading determined by
$\deg V=-1$, $\deg\oV = 1$ and 
$\deg\D(H)=0$. Moreover, there is an isomorphism of graded vector spaces induced by the multiplication: 
\begin{align}\label{eq:triangular decomsposition of D}
\D(V ,H)\simeq\BV(V)\ot\D(H)\ot\BV(\oV). 
\end{align}
In conclusion, $\D(V ,H)$ admits a triangular decomposition with $T = \D(H)$, $A^{-} = \BV(V)$ and $A^{+} = \BV(\oV)$.

\subsection{Simple \texorpdfstring{$\D(V,H)$}{D(V,H)}-modules}\label{subsec:simples}\

In this subsection we explain how to compute the simple modules of $\D(V ,H)$ 
by exploiting its triangular decomposition. 
For more details we refer to \cite{BT,PV,V}.

We begin by constructing the proper standard modules which are the images of the composition of the functors
$$
\xymatrix{\fM_{H}^{V}: {}_{\D(H)}\cC \ar[rr]^{\Inf_{\D(H)}^{\D^{\geq0}(V, H)}} && {}_{\D^{\geq0}(V, H)}\cC\ar[rr]^{\Ind_{\D^{\geq0}(V, H)}^{\D(V,H)}} 
& &
{}_{\D(V, H)} \cC}
$$
where $\Inf_{\D(H)}^{\D^{\geq0}(V, H)}$ is given by the canonical (graded)  
Hopf algebra epimorphism $\D^{\geq0}(V, H)\twoheadrightarrow\D(H)$ and 
$\Ind^{\D(V,H)}_{\D^{\geq0}(V, H)}$ by the inclusion $\D^{\geq0}(V, H)\hookrightarrow\D(V,H)$. 
More explicitly, the \textit{proper standard module} of $N\in{}_{\D(H)}\cC$ is
$$
\fM_{H}^{V}(N) = \D(H,V) \ot_{\D^{\geq0}(V, H)} \Inf_{\D(H)}^{\D^{\geq0}(V, H)}(N) = 
\D(H,V) \ot_{\D^{\geq0}(V, H)} N.
$$
Notice that $\oV$ acts by zero on $N=1\ot N\subset\fM_{H}^{V}(N)$.

Composing $\fM^{V}_{H}$ with the endofunctor given by taking the head, one has the functor
$$
\xymatrix{
{}_{\D(H)}\cC \ar[r]^{\fL^{V}_{H}} & {}_{\D(V,H)}\cC, & \fL^{V}_{H}(N)   = \head(\fM^{V}_{H}(N)),
}
$$
whose image is the maximal semisimple quotient of $\fM^{V}_{H}(N)$. Note that both $\fM_H^V$ and $\fL_H^V$ commute with the shift-of-grading functors.

The proper standard modules of simple objects in ${}_{\D(H)}\cC$ will play a special role in the classification of those in ${}_{\D(V,H)}\cC$. 
For that reason, we introduce a particular terminology.  We call \textit{weights} the elements in $\Irr{}_{\D(H)}\cC$.
If $\lambda\in\Irr{}_{\D(H)}\cC$ is a subobject of $\fN\in{}_{\D(V,H)}\cC$ such that $\oV\cdot \lambda = 0$, 
we say that $\lambda$ is a {\it highest-weight (of $\fN$)}. 
An object in ${}_{\D(V,H)}\cC$  generated 
by a highest-weight is called a {\it highest-weight module}. We define {\it lowest-weight (modules)} analogously using $V$ 
instead of $\oV$. Given $\lambda\in\Irr{}_{\D(H)}\cC$, we call \textit{Verma module}
\footnote{We change slightly the notation of the Verma modules with respect to \cite{V} to put the emphasis on 
$V$ and $H$ as this will be useful for our recursive argument for $V$ decomposable.}
the proper standard module
\begin{align}\label{eq:verma}
\fM_{H}^{V}(\lambda)=\D(V ,H)\ot_{\D^{\geq0}(V, H)}\lambda.
\end{align}
Thus, any highest-weight module is a quotient of a Verma module.

A classification of the simple modules over algebras with triangular decomposition is well-known, see for instance \cite{HN,BT}. 
In the case of $\D(V,H)$ this is given as follows.

\begin{theorem}\label{thm:pogo-vay}
\
\begin{enumerate}[label=\rm{(\alph*)}]
 \item $\fL^{V}_{H}(\lambda)$ is a simple highest-weight module for all $\lambda\in\Irr{}_{\D(H)}\cC$.
 \item Any 
 simple object in ${}_{\D(V,H)}\cC$ is isomorphic to $\fL^{V}_{H}(\lambda)$ for a unique weight $\lambda\in\Irr{}_{\D(V,H)}\cC$. 
 \end{enumerate}
\qed
\end{theorem}


We list some general remarks about simple modules that will be useful later. 

\begin{obs}\label{obs:highest weight module}
If $\fN$ is a highest-weight module of weight $\lambda$, then $\fN$ is a quotient of $\fM^{V}_{H}(\lambda)$ 
and the head of $\fN$ is isomorphic to $\fL^{V}_{H}(\lambda)$. This is a direct consequence of the
description above.
\end{obs}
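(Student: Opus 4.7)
The plan is to show both claims by exploiting the universal property of the induced module and then a formal argument about heads.

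First, to show that $\fN$ is a quotient of $\fM^V_H(\lambda)$, I would invoke Frobenius reciprocity for the adjunction between $\Ind_{\D^{\geq 0}(V,H)}^{\D(V,H)}$ and restriction. By hypothesis, there is a copy of $\lambda$ inside $\Res(\fN)$ which is a simple $\D(H)$-submodule satisfying $\oV\cdot\lambda=0$. Since $\D^{\geq0}(V,H)=\BV(\oV)\#\D(H)$, this copy of $\lambda$ is stable under the action of $\D^{\geq0}(V,H)$, hence a $\D^{\geq0}(V,H)$-submodule of $\Res(\fN)$ (the inflation). The inclusion $\lambda\hookrightarrow\fN$ then factors through a unique $\D(V,H)$-morphism
\[
\varphi:\ \fM_H^V(\lambda)=\D(V,H)\otimes_{\D^{\geq 0}(V,H)}\lambda\ \longrightarrow\ \fN.
\]
Because $\fN$ is generated as a $\D(V,H)$-module by its copy of $\lambda$, the image of $\varphi$ is all of $\fN$, and so $\fN$ is a quotient of $\fM_H^V(\lambda)$.

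For the second part, I would argue as follows. Composing the surjection $\fM_H^V(\lambda)\twoheadrightarrow \fN$ with the canonical projection $\fN\twoheadrightarrow\head(\fN)$ gives a surjection from $\fM_H^V(\lambda)$ onto a semisimple module. By the universal property of the head (maximal semisimple quotient), this surjection factors through $\fL_H^V(\lambda)=\head(\fM_H^V(\lambda))$, which is simple by Theorem \ref{thm:pogo-vay}. Hence $\head(\fN)$ is a quotient of the simple module $\fL_H^V(\lambda)$, and therefore is either zero or isomorphic to $\fL_H^V(\lambda)$. Since $\fN$ is finite-dimensional (it is a quotient of $\fM_H^V(\lambda)$, which is finite-dimensional as $\BV(V)$ and $\lambda$ are) and nonzero, its head is nonzero, giving $\head(\fN)\simeq \fL_H^V(\lambda)$.

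This is essentially a formal argument with no serious obstacle; the only subtle point to verify is that the inflated $\lambda$ really is a $\D^{\geq0}(V,H)$-submodule of $\Res(\fN)$, which is exactly the content of the highest-weight condition $\oV\cdot\lambda=0$ combined with the triangular/bosonization description of $\D^{\geq0}(V,H)$ in \S\ref{subsub:generalized qg}.
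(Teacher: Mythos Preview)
Your proposal is correct and is precisely the standard argument the paper has in mind; the remark in the paper is stated without proof, simply noting it is ``a direct consequence of the description above,'' and your Frobenius reciprocity argument combined with the universal property of the head is exactly how one makes that description explicit.
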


\begin{obs}\label{obs:simple simple}
Let $B$ and $\overline{B}$ be linear bases of $V$ and $\oV$, respectively. As the elements of $V$ and $\oV$ act nilpotently,
it holds that $\fL^{V}_{H}(\lambda)=\lambda$, with $V$ and $\oV$ acting trivially, if and only if $\Phi_{\alpha,v}$ acts by zero on 
$\lambda$ for all $v \in B$ and $\alpha \in \overline{B}$. These simple modules are called {\it rigid} \cite{BT}.

\end{obs}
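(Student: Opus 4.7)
The plan is to verify the equivalence by direct computation using the commutation rule \eqref{eq:commutation rules gral}; both implications rest on the observation that once $V$ and $\oV$ act trivially on $\lambda$, the cross relation between them collapses to the requirement $\Phi_{\alpha,v}\cdot\lambda=0$.

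For the forward direction, I would start by observing that if $\fL^{V}_{H}(\lambda)=\lambda$, then $V$ and $\oV$ are forced to annihilate $\lambda$: the subspace $V\cdot\lambda\subseteq\lambda$ is a $\D(H)$-submodule (by the bosonization relation $hv=(h\_{1}\cdot v)h\_{2}$), hence equals $0$ or $\lambda$ by simplicity, and iterating together with the finite-dimensionality of $\BV(V)$ rules out the latter; likewise $\oV\cdot\lambda=0$. Evaluating \eqref{eq:commutation rules gral} on any $x\in\lambda$ then gives
$\Phi_{\alpha,v}\cdot x = \alpha v\cdot x - (\alpha\_{-1}\cdot v)\cdot(\alpha\_{0}\cdot x) = 0$,
since both $V$ and $\oV$ act as zero (note $\alpha\_{0}\in\oV$). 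Hence $\Phi_{\alpha,v}$ kills $\lambda$ for every $v\in V$, $\alpha\in\oV$, and in particular on the prescribed bases.

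For the converse, I would first use that $\Phi_{\alpha,v}$ depends bilinearly on $(\alpha,v)$, so the vanishing on $B\times\overline{B}$ propagates to all of $V\times\oV$. I then extend the $\D(H)$-action on $\lambda$ to a $\D(V,H)$-action by declaring $V\cdot\lambda=0=\oV\cdot\lambda$, and verify consistency against the triangular decomposition \eqref{eq:triangular decomsposition of D}: the relations inside $\BV(V)$ and $\BV(\oV)$ become $0=0$ automatically, the bosonization relations between $V$ (resp.\ $\oV$) and $\D(H)$ also act as $0=0$ on $\lambda$, and the cross relation \eqref{eq:commutation rules gral} collapses to $0=\Phi_{\alpha,v}\cdot\lambda$, which is exactly the hypothesis. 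The resulting representation is simple over $\D(V,H)$ because it remains simple over $\D(H)$, and $\oV\cdot\lambda=0$ makes it a simple highest-weight module of weight $\lambda$, so by Theorem \ref{thm:pogo-vay}(a) it coincides with $\fL^{V}_{H}(\lambda)$. The main technical point---mild though it is---is this consistency verification in the converse, which is precisely engineered to reduce to the vanishing of $\Phi_{\alpha,v}$.
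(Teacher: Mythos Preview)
The paper states this as a remark without supplying a proof, so there is no argument to compare against; your task was really to supply one, and your proof is correct.

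A couple of comments. In the forward direction your nilpotency argument is fine: $V\cdot\lambda$ is indeed a $\D(H)$-submodule, and if it equalled $\lambda$ then iterating would give $\lambda=\BV^{n_{top}+1}(V)\cdot\lambda=0$, a contradiction. For the converse, your direct verification of the relations works, but note that enumerating ``all relations of $\D(V,H)$'' is slightly informal; you are implicitly relying on the presentation implicit in \S\ref{subsub:generalized qg}~(a)--(c) together with the triangular decomposition \eqref{eq:triangular decomsposition of D}. A cleaner variant that avoids this is to work inside the Verma module: show that $N:=\bigoplus_{k\geq1}\BV^k(V)\ot\lambda\subset\fM^V_H(\lambda)$ is a $\D(V,H)$-submodule. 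It is visibly stable under $\BV(V)$ and $\D(H)$, and for $\oV$ the only issue is degree $-1$, where the commutation rule \eqref{eq:commutation rules gral} gives $\alpha\cdot(v\ot m)=(\alpha\_{-1}\cdot v)\ot(\alpha\_{0}\cdot m)+\Phi_{\alpha,v}\cdot m=\Phi_{\alpha,v}\cdot m=0$. Then $\fM^V_H(\lambda)/N\simeq\lambda$ is a simple quotient, hence equals $\fL^V_H(\lambda)$. This sidesteps having to argue that your list of relations is complete.
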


\begin{obs}\label{obs:verma simple}
Let us fix non-zero homogeneous elements $\vt\in\BV^{n_{top}}(V)$ and $\at\in\BV^{n_{top}}(\oV)$; 
note the maximum degree of these Nichols algebras is the same. 
By the triangular decomposition of $\D(V ,H)$, there exists $\Theta\in\D(H)$ such that
\begin{align}\label{eq:discriminante}
\at\vt-\Theta\in\oplus_{n>0}\BV^n(V)\ot\D(H)\ot\BV^n(\oV).
\end{align}
It holds that
\footnote{This follows from the proof of \cite[Corollary 15]{PV}, as it holds for any Hopf algebra $H$.}
$\fL^{V}_{H}(\lambda)=\fM^{V}_{H}(\lambda)$ if and only if $\Theta\cdot(1\ot\fm)\neq0$ for some 
$\fm\in\lambda$.
In such case, these Verma modules are also projective, see 
\cite[Corollary 5.12]{V}.
\end{obs}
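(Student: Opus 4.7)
The plan is to exploit the triangular decomposition $\fM^V_H(\lambda)=\BV(V)\ot\lambda$ and the description $\fS^V_H(\lambda)=\BV(\oV)(\vt\ot\lambda)$ recalled just before the statement, so that simplicity of $\fM^V_H(\lambda)$ becomes equivalent to its socle reaching the top graded component $\lambda$ in degree zero.

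First I would compute $(\at\vt)\cdot(1\ot\fm)$ for $\fm\in\lambda$. Writing $\at\vt=\Theta+\Xi$ with $\Xi\in\bigoplus_{n>0}\BV^n(V)\ot\D(H)\ot\BV^n(\oV)$, each summand of $\Xi$ ends on the right in an element of $\BV^n(\oV)$ with $n>0$, and the highest-weight subspace $\fM^V_H(\lambda)_0=\lambda$ is annihilated by $\oV$; hence $\Xi\cdot(1\ot\fm)=0$ and we obtain
\[
\at\cdot(\vt\ot\fm)\ =\ (\at\vt)\cdot(1\ot\fm)\ =\ \Theta\cdot\fm\ \in\ \lambda,
\]
where the leftmost expression lives in degree $-n_{top}+n_{top}=0$, hence inside $\lambda=\fM^V_H(\lambda)_0$.

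For the forward implication, assume $\Theta\cdot\fm\neq 0$ for some $\fm\in\lambda$. The displayed identity produces a nonzero element of $\lambda$ sitting inside $\BV(\oV)(\vt\ot\lambda)=\fS^V_H(\lambda)$, so $\fS^V_H(\lambda)\cap\fM^V_H(\lambda)_0$ is a nonzero $\D(H)$-submodule of the simple $\D(H)$-module $\lambda$, forcing $\lambda\subseteq\fS^V_H(\lambda)$. Since $1\ot\lambda$ generates $\fM^V_H(\lambda)=\BV(V)\ot\lambda$ as a $\D(V,H)$-module, we conclude $\fS^V_H(\lambda)=\fM^V_H(\lambda)$, so the Verma module is simple and equals $\fL^V_H(\lambda)$. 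Conversely, if $\fM^V_H(\lambda)=\fL^V_H(\lambda)$ is simple it coincides with its socle, so $\lambda=\fM^V_H(\lambda)_0\subseteq\BV(\oV)(\vt\ot\lambda)$; because $\BV^{n_{top}}(\oV)=\Bbbk\at$, the only way to reach degree zero from degree $-n_{top}$ is via a nonzero $\at\cdot(\vt\ot\fm)=\Theta\cdot\fm$ for some $\fm$.

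The projectivity assertion is quoted directly from \cite[Corollary 5.12]{V}. The only genuine (and very small) obstacle is verifying that $\Xi$ annihilates $1\ot\fm$; once that is settled the equivalence falls out cleanly from the simplicity of $\lambda$ as a $\D(H)$-module together with the explicit formula $\fS^V_H(\lambda)=\BV(\oV)(\vt\ot\lambda)$ for the socle.
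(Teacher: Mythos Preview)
Your argument is correct and is precisely the line of reasoning the paper has in mind: the paper does not spell out a proof here but refers to \cite[Corollary 15]{PV}, whose proof proceeds exactly via the computation $\at\cdot(\vt\ot\fm)=\Theta\cdot\fm$ together with the socle description $\fS^V_H(\lambda)=\BV(\oV)(\vt\ot\lambda)$. So you have essentially reconstructed the cited argument.
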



\subsubsection{The character of the simple modules}\

\smallbreak

For any $N\in{}_{\D(H)}\cM$,  the proper standard module $\fM_{H}^{V}(N)$ in the category ${}_{\D(V,H)}\cM$ inherits the grading 
afforded by $\D(V,H)$. Explicitly,
\begin{align*}
\fM_{H}^{V}(N)=\bigoplus_{k=0}^{-n_{top}}\big(\fM_{H}^{V}(N)\big)_k\quad\mbox{with}\quad \big(\fM_{H}^{V}(N)\big)_k=\BV^{-k}(V) \ot N.
\end{align*}
It follows from \cite[Proposition 3.5]{GG} that the head of $\fM^{V}_{H}(N)$ in ${}_{\D(V,H)}\cM$ is 
a graded quotient. Moreover, $\fL_H^V$ commutes with the grading-forgetful functors. 
In particular, for any $\lambda\in\Irr{}_{\D(H)}\cM$ we have a decomposition
\begin{align*}
\fL^{V}_{H}(\lambda)=\bigoplus_{n\leq 0} \fL^{V}_{H}(\lambda)_{n}
\end{align*}
making it an object in ${}_{\D(V,H)}\cG$.

On the other hand, since $\D(H)$ is concentrated in degree $0$, we can consider each $\lambda\in\Irr{}_{\D(H)}\cM$ inside  ${}_{\D(H)}\cG$ 
as an object concentrated in degree $0$. Thus, $\Irr{}_{\D(H)}\cM\times\Z$ is in bijection with $\Irr{}_{\D(H)}\cG$ via $(\lambda,n)\leftrightarrow\lambda[n]$. 

Assume now $H$ is semisimple; hence $\D(H)$ also is. 
Then for each $n$, $\fL^{V}_{H}(\lambda)_{n}\simeq\oplus_{\mu\in\Irr{}_{\D(H)}\cM}\,\mu^{\oplus t_{\mu,n}}$ as $\D(H)$-modules. 
We call {\it the character of $\fL_H^V(\lambda)$} the graded $\D(H)$-module
\begin{align*}
\Res\big(\fL^{V}_{H}(\lambda)\big):=\bigoplus_{\substack{ n\leq 0 \\\mu\in\Irr{}_{\D(H)}\cM}}\mu[n]^{\oplus t_{\mu,n}}.
\end{align*}
This gives us good information about the simple modules as it is a complete invariant, 
since $\Res\big(\fL^{V}_{H}(\lambda)\big)=\lambda\oplus\mbox{weights in degree\,<0}$.


\subsubsection{The action on the Verma modules}\label{subsub:verma}\

\smallbreak

By the paragraphs above, one immediately realizes that to describe a simple module $\fL^{V}_{H}(\lambda)$
one has to deal with the submodules of the Verma module $\fM^{V}_{H}(\lambda)$. 
For explicit computations, it is convenient to keep in mind  the following key facts.
\begin{enumerate}[label=\rm{(\alph*)}]
 \item\label{item:subsub:verma a} The action 
 of $\BV(V)$ and $\D(H)$  on $\fM^{V}_{H}(\lambda)$ is given by
\begin{align*}
z\cdot(v\ot\fm)=(zv)\ot\fm\quad\mbox{and}\quad h\cdot(v\ot\fm)=(h\_{1}\cdot v)\ot (h\_{2}\cdot \fm)
 \end{align*}
 for all $z,v\in\BV(V)$, $\fm\in\lambda$ and $h\in\D(H)$. In particular,
 \begin{align}\label{eq:verma iso}
 \fM^{V}_{H}(\lambda)\simeq\BV(V)\ot\lambda=\bigoplus_{n=0}^{n_{top}}\BV^n(V)\ot\lambda
 \end{align}
 is an isomorphism and a decomposition as $\D(H)$-modules, respectively.
 
 \item\label{item:subsub:verma V oV degre} Let 
 $\fM^{V}_{H}(\lambda)_{k}=\BV^{-k}(V)\ot\lambda$. Then 
 \begin{align*}
 V\fM^{V}_{H}(\lambda)_{k}=\fM^{V}_{H}(\lambda)_{k-1}\quad\mbox{and}\quad \oV
 \fM^{V}_{H}(\lambda)_{k} \subseteq \fM^{V}_{H}(\lambda)_{k+1}.\end{align*}
 
\item To compute the action of $\BV(\oV)$ we use the commutation rule \eqref{eq:commutation rules gral}.
\item
The action of $\oV$ on a $\D(V ,H)$-module $\fN$ is a morphism of $\D(H)$-modules:
\begin{align}\label{eq:action map}
h(\alpha\fm)=(h\_{1}\cdot\alpha)(h\_{2}\fm) 
\end{align}
for all $\alpha\in\oV$, $\fm\in\fN$ and $h\in\D(H)$, cf. \cite[$(31)$]{PV}.
\end{enumerate}


\subsubsection{The simple \texorpdfstring{$\D(V,H)$}{D(V,H)}-modules as socles of the Verma modules}\

\smallbreak

We introduce now the functor
$$
\xymatrix{{}_{\D(H)}\cC \ar[r]^{\fS^{V}_{H}} & {}_{\D(V,H)}\cC, & \fS^{V}_{H}(N) = \soc(\fM^{V}_{H}(N)),
}
$$
{\it i.e.} $\fS^{V}_{H}(N)$ is the maximal semisimple submodule of 
$\fM^{V}_{H}(N)$. As in the case of the head, it follows from \cite[Proposition 3.5]{GG} that the socle of a 
standard module is a graded submodule even if $N\in{}_{\D(H)}\cM$, considered as a graded module
concentrated in degree 0. Also, $\fS^{V}_{H}$ 
commutes with the grading-forgetful functors and the shift-of-grading  functors. 

The socles of the Verma modules give us another classification of the simple modules over $\D(V,H)$. 
The next result for $H$ being a group algebra is in \cite{PV}. 

\begin{theorem}\label{thm:pogo-vay soc}
\
\begin{enumerate}[label=\rm{(\alph*)}]
 \item\label{item:thm:pogo-vay soc a}
 $\fS^{V}_{H}(\lambda)$ is a simple lowest-weight module with lowest-weight is $\lambda_V\lambda$ for all $\lambda\in\Irr{}_{\D(H)}\cC$
 \item\label{item:thm:pogo-vay soc b} Any 
 simple object in ${}_{\D(V,H)}\cC$ is isomorphic to $\fS^{V}_{H}(\lambda)$ for a unique weight $\lambda\in\Irr{}_{\D(V,H)}\cC$. 
 \end{enumerate}
\end{theorem}

\begin{proof}
From \S\ref{subsub:verma} \ref{item:subsub:verma a}, we see that the socle of $M_H^V(\lambda)$ in ${}_{\D^{\leq0}(V,H)}\cC$ is $\BV^{n_{top}}(V)\ot\lambda$ and this is simple and isomorphic to $\Inf_{\D(H)}^{\D^{\leq0}(V, H)}(\lambda_V\lambda)$. This implies \ref{item:thm:pogo-vay soc a}.

For \ref{item:thm:pogo-vay soc b}, we first consider the category ${}_{\D(V,H)}\cM$ in which the number of non-isomorphic simple modules is $\#\Irr{}_{\D(H)}\cM$ by Theorem \ref{thm:pogo-vay}. In \ref{item:thm:pogo-vay soc a} we have found the same number of non-isomorphic simple modules as tensoring by $\lambda_V$ gives a bijection on $\Irr{}_{\D(H)}\cM$. Therefore any simple module in ${}_{\D(V,H)}\cM$ is isomorphic to $\fS_H^V(\lambda)$ for a unique $\lambda\in\Irr{}_{\D(H)}\cM$. 

Let now $S\in{}_{\D(V,H)}\cG$ be a simple object and $F$ the grading-forgetful functor. Then $FS\simeq\fS^V_H(\lambda)$ for a unique $\lambda\in\Irr{}_{\D(H)}\cM$ by the above paragraph and hence $S\simeq\fS^V_H(\lambda)[n]\simeq\fS^V_H(\lambda[n])$ for some $n\in\Z$; the first isomorphism is consequence of \cite[Theorem 4.1]{GG}. This proves \ref{item:thm:pogo-vay soc b} for ${}_{\D(V,H)}\cG$ and completes the proof.
\end{proof}

Naturally, the socle of a Verma module is isomorphic to a simple highest-weight module. 
We can determine its highest-weight as follows. The next result is very useful to compute the simple modules.

\begin{cor}\label{cor:highest weight of the socle}
Let $\lambda\in\Irr{}_{\D(H)}\cC$ be a weight and
\begin{align*}
n=\min\left\{k\in\Z\mid\mbox{ there is a highest-weight $\mu$ in }\big(\fM^V_H(\lambda)\big)_k\right\}.
\end{align*}
Then $\fM^V_H(\lambda)$ has a unique highest-weight $\mu$ in degree $n$ and $\fS_H^V(\lambda)\simeq\fL_H^V(\mu)[n]$.
\end{cor}

\begin{proof}
As we mentioned, the socle of $\fM_H^V(\lambda)$ is a graded submodule, then  
$\fS_H^V(\lambda)\simeq\fL_H^V(\mu)[k]$ for some homogeneous highest-weight 
$\mu\subset\big(\fM^V_H(\lambda)\big)_k$. Let $\nu\subset\big(\fM^V_H(\lambda)\big)_j$ 
be another homogeneous highest-weight. 
The submodule generated by $\nu$ contains the socle. In particular, $\mu\subset\D(V,H)\nu$. 
Also, by the triangular decomposition, $\D(V,H)\nu=\BV(V)\nu=\nu+\sum_{i>0}\BV^{i}(V)\nu$. 
Then \S\ref{subsub:verma} \ref{item:subsub:verma V oV degre} 
implies that either $j>k$ or $j=k$ and $\nu=\mu$, and the corollary follows.
\end{proof}

\subsubsection{Example}\label{subsub:application 2}\ 

\smallbreak

Exterior algebras are
examples of Nichols algebras. They arise when the braiding of $V$ is $-flip$, 
that is $c_{V,V}(v\ot w)=-w\ot v$ for all $v,w\in V$.  We explain here a general strategy 
that applies to exterior algebras of two-dimensional vector spaces. In \S\ref{subsub:application 2n}, 
we consider any even dimensional vector space, as the Nichols algebras appearing in the 
context of the dihedral groups $\dm$ are all exterior algebras 
of vector spaces of even dimension.

Fix $V\in\ydh$ a two-dimensional module with basis $\{v_+,v_-\}$ and braiding $-flip$. 
Then $\BV(V)\simeq \bigwedge V=\ku\oplus V\oplus\ku\{v_{top}\}$ with $v_{top}=v_+v_-$. Let $\lambda$ be a weight with $V\ot\lambda$ 
semisimple and let $\mu$ be the highest-weight of minimum degree in $\fM_H^V(\lambda)$, recall Corollary \ref{cor:highest weight of the socle}. 
We have the following three  possibilities:
\begin{itemize}
 \item If $\deg\mu=0$, then $\mu=\lambda$ and $\fM_H^V(\lambda)=\fL_H^V(\lambda)=\fS_H^V(\lambda)$ is simple projective. 
 This occurs when $\Theta$ acts non-trivially on $\lambda$, see Remark \ref{obs:verma simple}. 
 One can compute $\Theta$ using $v_{top}$ and $\alpha_{top}=\alpha_+\alpha_-$ where $\{\alpha_+,\alpha_-\}$ is a basis of $\oV$.

\item If $\deg\mu=-1$, then $\Res\big(\fS_H^V(\mu)\big)=\mu[-1]\oplus\lambda_V\lambda[-2]$. 
We depict this situation in Figure \ref{fig:V dim 2}. To find $\mu$, first one has to decompose 
$V\ot\lambda$ as a direct sum of weights and then determine which is annihilated by $\oV$.

\item If $\deg\mu=-2$, then $\mu=\lambda_V\lambda$ and $\fS_H^V(\lambda)=\fL_H^V(\lambda_V\lambda)[-2]$ is a rigid module. 
To find the rigid module, one can use Remark \ref{obs:simple simple}. For that, one should compute the four elements 
$\Phi_{\pm,\pm}$ associated with the elements $v_{\pm}$ and $\alpha_{\pm}$.
\end{itemize}

\begin{figure}[!h]
\begin{tikzpicture}
\begin{scope}[on background layer]
\draw[dotted] (-4.5,1) -- (5.9,1) node at (6.15,1) {$0$};

\draw[dotted] (-4.5,0) --  (6,0) node at (6,0) {$-1$} ;

\draw[dotted] (-4.5,-1) -- (6,-1) node at (6,-1) {$-2$};

\draw[->] (-3,.75) to [out=18,in=162]
 node [midway,above,sloped] {$\ot\lambda$} (2,.75);
\end{scope}

\node at (-6,0) {};

\begin{scope}[xshift=2.5cm,xscale=.6]
\fill[fill=gray!50] 
(-.15,-1.15) to [out=315,in=225] 
(.15,-1.15) to
[out=45,in=-45] 
(.15,-.85) --
(-.85,.15) to
[out=135,in=45]
(-1.15,.15) to
[out=225,in=135]
(-1.15,-.15) --
(-.15,-1.15);

\node[circle,fill=black,inner sep=0pt, minimum width=5pt,label=above:{$\lambda$}] at (0,1) {};

%

\node[circle,fill=black,inner sep=0pt, minimum width=5pt] at (1.75,0) {};

\node[circle,fill=black,inner sep=0pt, minimum width=5pt] at (1,0) {};

\node[circle,fill=black,inner sep=0pt, minimum width=5pt] at (.5,0) {};

\node[circle,fill=black,inner sep=0pt, minimum width=5pt] at (.1,0) {};

\node[circle,fill=black,inner sep=0pt, minimum width=5pt] at (-.2,0) {};

\node[circle,fill=black,inner sep=0pt, minimum width=5pt,label={[fill=white,label distance=1pt]left:$\mu$}] at (-1,0) {};

\node[circle,fill=black,inner sep=0pt, minimum width=5pt,label=below:{$\lambda_V\lambda$}] at (0,-1) {};
\end{scope}

\begin{scope}[xshift=-3.5cm]

\node[circle,fill=black,inner sep=0pt, minimum width=5pt,label=above:{$\ku$}] at (0,1) {};

\node[circle,fill=black,inner sep=0pt, minimum width=5pt,label={[fill=white]left:$V$}] at (0,0) {};

\node[circle,fill=black,inner sep=0pt, minimum width=5pt,label=below:{$\lambda_V$}] at (0,-1) {};
\end{scope}

\end{tikzpicture}
\caption{The big dots represent the weights of $\BV(V)$ and $\fM_H^V(\lambda)$. 
Their degrees are indicated on the right. Those in the shadow region form the socle $\fS^{V}_H(\lambda)$ when $\deg\mu=-1$.}
\label{fig:V dim 2}
\end{figure}
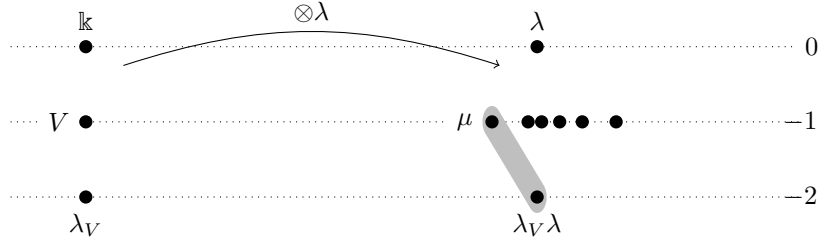

\subsubsection{Tensoring by rigids}\ 

\smallbreak

We observe that any semisimple object in ${}_{\D(V,H)}\cG$ is of the form $\fL^V_H(M):=\oplus_i\fL^V_H(\lambda_i[n_i])$ 
for some semisimple object $M=\oplus_i\lambda_i[n_i]$ in ${}_{\D(H)}\cG$.

\begin{prop}\label{prop:top-Lcero}
Let $\fN=\oplus_{i\leq t} \fN_{i}$ be an object in ${}_{\D(V,H)}\cG$ with $t \in \Z$. 
 If $\fN_{t}$ is a semisimple $\D(H)$-module and generates $\fN$
as $\D(V ,H)$-module, then 
$$
\operatorname{top}(\fN) \simeq \fL^V_H(\fN_{t}).
$$ 
\end{prop}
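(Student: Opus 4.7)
The plan is to exhibit $\head(\fN)\simeq\fL^V_H(\fN_t)$ by building surjections in both directions (from induction and from quotients, respectively) and checking that their composition is the identity on the degree-$t$ piece that generates everything.

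For the first surjection, note that $\oV\cdot\fN_t\subseteq\fN_{t+1}=0$ since $\fN$ sits in degrees $\leq t$, so $\fN_t$ becomes a $\D^{\geq0}(V,H)$-module with $\oV$ acting trivially. By the universal property of induction, the inclusion lifts to a graded $\D(V,H)$-linear map
$$
\phi:\fM^V_H(\fN_t)\longrightarrow\fN,\qquad 1\ot n\mapsto n,
$$
which is surjective because $\fN_t$ generates $\fN$. Decomposing $\fN_t=\bigoplus_i\lambda_i$ as $\D(H)$-modules, additivity of $\fM^V_H$ and the fact that $\head$ sends surjections to surjections yields
$$
\head(\phi):\;\fL^V_H(\fN_t)=\bigoplus_i\fL^V_H(\lambda_i)\twoheadrightarrow\head(\fN).
$$

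For each $i$ I would produce a $\D(V,H)$-map $\psi_i:\fN\to\fL^V_H(\lambda_i)$ whose restriction to $\fN_t$ is the projection onto $\lambda_i$. Take $\fN^{(i)}:=\D(V,H)\cdot\bigoplus_{j\neq i}\lambda_j$. Since $\bigoplus_{j\neq i}\lambda_j$ is a $\D(H)$-submodule of $\fN_t$ and $\oV$ annihilates $\fN_t$, the triangular decomposition $\D(V,H)\simeq\BV(V)\ot\D(H)\ot\BV(\oV)$ collapses this to $\fN^{(i)}=\BV(V)\cdot\bigoplus_{j\neq i}\lambda_j$; its degree-$t$ component equals $\bigoplus_{j\neq i}\lambda_j$ because $\BV^k(V)$ sits in degree $-k$ and only $\BV^0(V)=\ku$ preserves degree. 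Therefore $\fN/\fN^{(i)}$ has degree-$t$ part $\lambda_i$ and is generated by it, so it is a highest-weight module of weight $\lambda_i$; by Remark \ref{obs:highest weight module}, its head is $\fL^V_H(\lambda_i)$, and one defines $\psi_i$ as the resulting composite.

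Bundling them, $\psi:=\bigoplus_i\psi_i:\fN\to\fL^V_H(\fN_t)$ factors through a map $\bar\psi:\head(\fN)\to\fL^V_H(\fN_t)$ because the codomain is semisimple. The composition $\bar\psi\circ\head(\phi):\fL^V_H(\fN_t)\to\fL^V_H(\fN_t)$ acts as the identity on the degree-$t$ component $\bigoplus_i\lambda_i$ by construction, and since $\fL^V_H(\fN_t)$ is generated by this degree-$t$ component as a $\D(V,H)$-module, any endomorphism which is the identity there is globally the identity. Hence $\head(\phi)$ is injective, completing the isomorphism. The main obstacle I anticipate is establishing $(\fN^{(i)})_t=\bigoplus_{j\neq i}\lambda_j$: this is where the triangular decomposition, the vanishing of $\oV\cdot\fN_t$, and the $\D(H)$-invariance of the chosen submodule must be combined carefully. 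Once this is in hand, the rest is formal manipulation with induction and tops.
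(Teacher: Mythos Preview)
Your proof is correct and follows essentially the same route as the paper: the crucial step---showing that $\fN/\fN^{(i)}$ is a highest-weight module of weight $\lambda_i$ by using the triangular decomposition to reduce $\D(V,H)\cdot\bigoplus_{j\neq i}\lambda_j$ to $\BV(V)\cdot\bigoplus_{j\neq i}\lambda_j$ and then reading off the degree-$t$ component---is exactly the argument the paper gives. The only cosmetic difference is packaging: the paper phrases the result as an equality of $\dim\Hom_{\operatorname{gr-}\D(V,H)}(\fN,\fL^V_H(\lambda[t]))$ with the multiplicity $n_\lambda$ (bounding above via restriction to $\D(H)$ and below via the quotient construction), whereas you build explicit surjections in both directions and verify their composite is the identity on the generating degree.
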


\begin{proof}
We have that $\oV\cdot\fN_t=0$ by the grading assumption on $\fN$. 
Then there is an epimorphism $p:\fM_H^V(\fN_t)\longrightarrow\fN$. 
Let $\fR$ be the Jacobson radical of $\fM_H^V(\fN_t)$, that is $\fM_H^V(\fN_t)/\fR\simeq\fL_H^V(\fN_t)$. 
Then $p$ induces a projection $\overline{p}:\fL_H^V(\fN_t)\longrightarrow\fN/p(\fR)$ and hence $\fN/p(\fR)$ is semisimple. 
Also, the homogeneous component of $\fR$ of degree $t$ is zero. 
Then $\overline{p}_{|\fN_t}$ is injective and therefore $\overline{p}$ so is. 
This implies $\fL_H^V(\fN_t)$ is a direct summand of $\operatorname{top}(\fN)$. 
On the other hand, $\operatorname{top}(\fN)$ is a semisimple quotient of $\fM_H^V(\fN_t)$. 
Then $\operatorname{top}(\fN)$ is a direct summand of $\fL_H^V(\fN_t)$. Thus we obtain the desired isomorphism.
\end{proof}

We now prove that tensoring a simple module by a rigid module yields a semisimple module
when $H$ is semisimple.

\begin{theorem}\label{thm:rigid tensor}
Let $\lambda,\mu\in{}_{\D(H)}\cM$ with $\fL^V_H(\mu)=\mu$ rigid. 
If $\mu\ot\lambda$ is a semisimple $\D(H)$-module,
then
\begin{align*}
\fL^V_H(\mu)\ot\fL^V_H(\lambda)\simeq\fL^V_H(\mu\ot\lambda)\simeq\fL^V_H(\lambda\ot\mu)\simeq\fL^V_H(\lambda)\ot\fL^V_H(\mu).
\end{align*}
In particular, $\fL^V_H(\mu)\ot\fL^V_H(\lambda)$ is a semisimple $\D(V ,H)$-module. Moreover, 
it is a direct sum of simple rigid modules if $\lambda$ is also rigid.
\end{theorem}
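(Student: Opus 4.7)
The plan is to reduce the four-way isomorphism to a single claim, identify its head via Proposition \ref{prop:top-Lcero}, and then tackle the semisimplicity; the last step will be the main obstacle. First, the two ``flip'' isomorphisms in the chain are immediate: the braiding of ${}_{\D(V,H)}\cM$ (coming from the $R$-matrix of $\D(V,H)$) yields $\fL^V_H(\mu)\ot\fL^V_H(\lambda)\simeq\fL^V_H(\lambda)\ot\fL^V_H(\mu)$, while the braiding of ${}_{\D(H)}\cM$ gives $\mu\ot\lambda\simeq\lambda\ot\mu$ and hence $\fL^V_H(\mu\ot\lambda)\simeq\fL^V_H(\lambda\ot\mu)$ by functoriality of $\fL^V_H$. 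It thus suffices to prove the central isomorphism $\mu\ot\fL^V_H(\lambda)\simeq\fL^V_H(\mu\ot\lambda)$.

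Next, I would view $\fN:=\mu\ot\fL^V_H(\lambda)$ as a graded $\D(V,H)$-module concentrated in degrees $\leq 0$ with top component $\fN_0=\mu\ot\lambda$, in order to apply Proposition \ref{prop:top-Lcero} at $t=0$. Semisimplicity of $\fN_0$ is the standing hypothesis. For the highest-weight property $\oV\cdot\fN_0=0$, the comultiplication $\Delta(\alpha)=\alpha\ot 1+\alpha_{(-1)}\ot\alpha_{(0)}$ of $\alpha\in\oV$ inside $\D(V,H)$ splits the action on $m\ot x$ into two terms: the first vanishes since $\alpha\cdot m=0$ (rigidity of $\mu$) and the second since $\alpha_{(0)}\cdot x=0$ ($\lambda$ being a highest-weight in $\fL^V_H(\lambda)$). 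For generation of $\fN$ by $\fN_0$, rigidity analogously reduces the $V$-action to $v\cdot(m\ot x)=(v_{(-1)}\cdot m)\ot(v_{(0)}\cdot x)$; since the Yetter-Drinfeld braiding $c_{V,\mu}\colon V\ot\mu\to\mu\ot V$, $v\ot m\mapsto v_{(-1)}\cdot m\ot v_{(0)}$, is a linear isomorphism, we have $V\cdot(\mu\ot\lambda)=\mu\ot V\cdot\lambda$, and iterating, $\BV(V)\cdot(\mu\ot\lambda)=\mu\ot\BV(V)\cdot\lambda=\fN$. Proposition \ref{prop:top-Lcero} then yields $\operatorname{top}(\fN)\simeq\fL^V_H(\mu\ot\lambda)$.

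The hard part will be promoting the resulting surjection $\fN\twoheadrightarrow\fL^V_H(\mu\ot\lambda)$ to an isomorphism, equivalently showing that $\fN$ is semisimple. My plan is to upgrade the preceding braiding identification to a $\D(V,H)$-module isomorphism $\Psi\colon\fM^V_H(\mu\ot\lambda)\to\mu\ot\fM^V_H(\lambda)$ between the Verma modules; $\D(V,H)$-linearity of $\Psi$ is exactly where rigidity enters, cancelling the cross terms $(v\cdot m)\ot x$ and $(\alpha\cdot m)\ot x$ that would otherwise obstruct it. By exactness of $\mu\ot(-)$, $\Psi$ transports the canonical projection $\fM^V_H(\lambda)\twoheadrightarrow\fL^V_H(\lambda)$ to a surjection $\fM^V_H(\mu\ot\lambda)\twoheadrightarrow\fN$, whose kernel lies in the maximal proper submodule by the head computation already established. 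I would then run the mirror analysis on $\fN\simeq\fL^V_H(\lambda)\ot\mu$ (where rigidity now forces both $V$ and $\oV$ to act only on the first factor, so the bottom graded component is lowest-weight) to identify $\operatorname{soc}(\fN)$ and to compare dimensions against $\fM^V_H(\mu\ot\lambda)$, pinning the kernel down as the full maximal proper submodule. This forces $\fN\simeq\fL^V_H(\mu\ot\lambda)$ and thereby delivers the semisimplicity asserted in the theorem.
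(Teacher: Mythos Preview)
Your first two paragraphs are correct and largely parallel the paper: the braiding reduces to one isomorphism, and the generation/highest-weight checks (with the observation that rigidity kills the ``wrong'' tensor factor in $\Delta(v)$ and $\Delta(\alpha)$) feed correctly into Proposition~\ref{prop:top-Lcero} to give $\operatorname{top}(\fN)\simeq\fL^V_H(\mu\ot\lambda)$. Your Verma isomorphism $\Psi$ is also fine.

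The gap is in the last paragraph. You correctly observe that on $\fL^V_H(\lambda)\ot\mu$ both $V$ and $\oV$ act only on the first tensor factor, but you then use this only to say that the bottom component is a lowest-weight and to gesture at a dimension comparison. That does not yield semisimplicity: knowing $\operatorname{top}(\fN)$ and even $\operatorname{soc}(\fN)$ does not rule out a non-split middle, and there is no a~priori dimension relation between $\dim\mu\cdot\dim\fL^V_H(\lambda)$ and $\sum_i\dim\fL^V_H(\lambda_i)$ that you could invoke without already knowing the theorem. The paper instead exploits your observation in the \emph{highest-weight} direction: if $\nu$ is the highest-weight of a simple submodule, write an element of $\nu$ as $\sum_k u_k\ot n_k$ with $\{n_k\}\subset\mu$ independent; then $0=\alpha\cdot\sum u_k\ot n_k=\sum(\alpha\cdot u_k)\ot n_k$ forces $\oV\cdot u_k=0$, so each $u_k$ lies in the unique highest-weight $\lambda$ of the simple module $\fL^V_H(\lambda)$ and hence $\nu\subset\lambda\ot\mu$ sits in degree~$0$. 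Applying this inside each cyclic submodule $\D(V,H)\lambda_i$ shows $\lambda_i\subset\operatorname{soc}(\D(V,H)\lambda_i)$, so $\D(V,H)\lambda_i$ is already simple; since these generate $\fN$, it is a sum of simples. Replace your socle/dimension sketch by this argument and the proof closes.
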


\begin{proof}
We prove only the last isomorphism, for the others follow from the fact that 
${}_{\D(H)}\cM$ and ${}_{\D(V ,H)}\cM$ are braided categories.

We start by pointing out that, as $\fL^V_H(\lambda)\ot\fL^V_H(\mu)=\fL^V_H(\lambda)\ot \mu$ is a graded $\D(V ,H)$-module, 
any simple $\D(V ,H)$-submodule $\fL^V_H(\nu)$ is a graded submodule by \cite[Proposition 3.5]{GG}.
Without loss of generality, we assume that $\lambda$ and $\mu$ are concentrated in degree $0$, since the
functors involved commute with the shift of grading. Then the homogeneous components are
$\fL^V_H(\lambda)_n\ot\mu$ for all $n\leq 0$; 
in particular, its homogeneous component of degree 0 is $\lambda\ot \mu$.

We prove first that all highest-weights are in degree 0.
Let $\fL^V_H(\nu)$ be a simple 
graded
$\D(V ,H)$-submodule of $\fL^V_H(\lambda)\ot \mu$ and 
assume $\nu\subset\fL^V_H(\lambda)_n\ot\mu$. We claim that $n=0$. Indeed, we pick 
$\sum_k u_k\ot n_k\in\nu$ with $\{n_k\}_{k\in K}$ linearly independent. For $\alpha\in\oV$, we have
\begin{align*}
0 & =\alpha\cdot\big(\sum_k u_k\ot n_k\big)=
\sum_k \big((\alpha\cdot u_k)\ot n_k+(\alpha\_{-1}\cdot u_k)\ot (\alpha\_{0}\cdot n_k)\big)\\
& =\sum_k(\alpha\cdot u_k)\ot n_k;
\end{align*}
where the first and the last equality hold because $\nu$ and $\mu$ are highest-weights, respectively.
Hence $\oV \cdot u_k=0$ for all $k$ and therefore $\oV\cdot(\D(H)\cdot u_k)=\D(H)\cdot(\oV\cdot u_k)=0$, for all $k$. 
Being $\D(H)\cdot u_k$ an graded $\D(H)$-submodule of $\fL^V_H(\lambda)$ which is annihilated by the action of $\oV$, 
it must be $\lambda$. In particular, $u_k\in\lambda$ and $\nu$ is in degree $n=0$. 

Let $\nu\subset\fL^V_H(\lambda)\ot\fL^V_H(\mu)$ be a weight in degree $0$. 
Then it is a highest-weight. Moreover, the $\D(V,H)$-submodule $\fN$ generated by $\nu$ is isomorphic to 
$\fL_H^V(\nu)$. Indeed, if $\fN=\nu\oplus_{i<0}\fN_i$ is not simple, 
there should be a highest-weight in degree $<0$ which is not possible by the paragraph above.

In conclusion, the $\D(V,H)$-submodule generated by $\lambda\ot\mu$ is semisimple and isomorphic to $\fL_H^V(\lambda\ot\mu)$. 
We prove next that $\lambda\ot\mu$ actually generates the whole module $\fL^V_H(\lambda)\ot\fL^V_H(\mu)$ and hence 
$\fL^V_H(\lambda)\ot\fL^V_H(\mu)\simeq\fL_H^V(\lambda\ot\mu)$ as we wanted. For that, we fix bases $\{m_j\}_{j\in J}$ and 
$\{n_k\}_{k\in K}$ of the weights $\lambda$ and $\mu$, respectively. 
Since $\mu$ is rigid, we have that $x\cdot n_k=0$ for any homogeneous element $x\in\BV(V)$ 
of degree $\geq1$. Also, because $\BV(V)\#H$ is a graded coalgebra, for such an element $x$ one may write its comultiplication by
$\Delta(x)=x\ot 1+\sum_t y_{t}\ot z_{t}$, where the elements $z_{t}$ 
are homogeneous of degree $\geq1$ for all $t$. 
Hence, for all $j\in J$ and $k\in K$ we have that 
\begin{align*}
x\cdot(m_j\ot n_k)=(x\cdot m_j)\ot n_k+\sum_{t}(y_{t}\cdot m_j)\ot(z_{t}\cdot n_k)=(x\cdot m_j)\ot n_k.
\end{align*}
Since $\fL^V_H(\lambda)$ is generated as a $\D(V ,H)$-module by the action of $\BV(V)$ on
$\{m_j\}_{j\in J}$, our assertion follows.

Lastly, if $\lambda$ is also rigid, then $\fL_H^V(\lambda\ot\mu)$ is concentrated in degree $0$. 
This implies that $\fL_H^V(\nu)=\nu$ for every weight $\nu$ of $\lambda\ot\mu$ and hence $\nu$ is rigid.
\end{proof}

\begin{obs}
The hypothesis of $\fL^V_H(\mu)$ being rigid is necessary. Otherwise, the tensor product might neither be generated in 
degree zero nor all its highest-weights be in degree zero. See for instance \cite[Theorem 4.1]{EGST} 
or \cite[Proposition 4.3]{PV2}. 
\end{obs}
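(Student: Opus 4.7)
The plan is to establish the necessity of the rigidity hypothesis by producing explicit counterexamples exhibiting each of the two failure modes recorded in the remark: (a) a tensor product $\fL^V_H(\lambda)\ot\fL^V_H(\mu)$ that is not generated by its degree-zero component, and (b) a tensor product that contains a highest-weight in strictly negative degree, so in particular cannot be written as $\oplus_i \fL^V_H(\lambda_i)$ with all $\lambda_i$ placed in degree zero. Both phenomena can only arise when $\fL^V_H(\mu)$ has a non-trivial component in some negative degree, i.e.\ precisely when rigidity fails.

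First, I would sketch the general mechanism explaining why dropping rigidity can cause the failure, so the reader sees that it is not a pathology of the specific examples. If $\fL^V_H(\mu)=\bigoplus_{n\leq 0}\fL^V_H(\mu)_n$ has $\fL^V_H(\mu)_n\neq 0$ for some $n<0$, then inside $\fL^V_H(\lambda)\ot\fL^V_H(\mu)$ the degree-$n$ component contains the subspace $\lambda\ot\fL^V_H(\mu)_n$. In contrast with the proof of Theorem \ref{thm:rigid tensor}, where rigidity of $\mu$ forced $\oV\cdot(m_j\ot n_k)=(\oV\cdot m_j)\ot n_k$, now the commutation rule \eqref{eq:commutation rules gral} produces genuine cross-terms via $\alpha\_{-1}\cdot u_k$ and $\alpha\_{0}\cdot n_k$. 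These cross-terms can vanish on carefully chosen $\D(H)$-summands of $\lambda\ot\fL^V_H(\mu)_n$, producing new highest-weights in degree $n<0$; and since such a summand lies outside $\D(V,H)\cdot(\lambda\ot\mu)$ in general, generation in degree zero also collapses.

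Second, I would import the concrete witnesses from the literature. From \cite[Proposition 4.3]{PV2}, formulated exactly in the Drinfeld-double framework of the Fomin--Kirillov algebra $\mathcal{E}_3$ over $\ku\mathbb{S}_3$, one extracts a pair of non-rigid simples whose tensor product contains a simple direct summand whose highest-weight sits in strictly negative degree; this alone contradicts the natural analogue of Theorem \ref{thm:rigid tensor} without the rigidity hypothesis. Complementarily, from \cite[Theorem 4.1]{EGST} one obtains, in the closely related restricted quantum-group setting, tensor products of simples that are indecomposable but not simple, exhibiting the failure of semisimplicity.

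The main obstacle is not conceptual but notational: aligning the conventions of the two cited references with the triangular-decomposition language of Section \ref{sec:general framework}, so that their modules are unambiguously identified with our $\fL^V_H(\lambda)$ and $\fL^V_H(\mu)$ and their gradings match the one induced by $\deg V=-1$, $\deg\oV=1$. Once that dictionary is in place, no further computation is needed: the cited decompositions transport verbatim into the present setting and simultaneously witness the two obstructions claimed in the remark, proving that rigidity is indispensable.
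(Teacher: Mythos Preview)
The paper gives no proof for this remark at all: it simply asserts that rigidity is necessary and points to \cite[Theorem 4.1]{EGST} and \cite[Proposition 4.3]{PV2} for instances where the conclusion of Theorem \ref{thm:rigid tensor} fails. Your proposal follows exactly the same route---invoke those two references as witnesses---so it is aligned with the paper's treatment; the additional heuristic paragraph explaining \emph{why} dropping rigidity allows new highest-weights in negative degree is extra commentary that the paper omits, but it is correct and harmless.
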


\subsection{A recursive strategy for \texorpdfstring{$V$}{V} decomposable}\label{subsec:decomposable} \

\smallbreak

We assume here that $V=W\oplus U$ is decomposable as $\D(H)$-module with $W\neq0\neq U$. This situation arises when $H$ is
the group algebra of the dihedral group $\dm$. In particular, 
$\BV(W)$ and $\BV(U)$ are braided graded Hopf subalgebras of $\BV(V)$. Following \cite[\S2.3]{AA}, we set 
\begin{align*}
Z=\ad_c\BV(U)(W)
\subset\BV(V),
\end{align*}
where $\ad_c$ is the braided adjoint action of a Hopf algebra in $_{\D(H)}\cM$.
Notice that $W\subseteq Z$.
It holds that $Z$ is a Yetter-Drinfeld module over $\BV(U)\#H$ via the adjoint action and the coaction 
$(\pi_{\BV(U)\#H}\ot\id)\circ\Delta_{\BV(V)\#H}$, where $\pi_{\BV(U)\#H}$ is the natural projection on $\BV(U)\#H$.
Besides, it turns out that
\begin{align}\label{eq:DVH BZ DUH}
\BV(V)\#H\simeq\BV(Z)\#(\BV(U)\#H),
\end{align}
as Hopf algebras, see {\it loc.~ cit.} or \cite[\S 8]{HS} for details and references.

Naturally, we can apply the techniques described in the previous sections 
to the bosonization on the right hand side of \eqref{eq:DVH BZ DUH}, 
{\it i.e.} $\BV(U)\#H$ and $Z$ playing the role of $H$ and $V$, respectively. 
This gives us a new description of $\D(V,H)$ and its simple modules in terms of those over $\D(U,H)$, 
the Drinfeld double of $\BV(U)\#H$. In this sense, we have
$$
\D(Z,U,H):=\D\big(\BV(Z)\#(\BV(U)\#H)\big)\simeq\D(\BV(V)\#H)=\D(V,H).
$$
Namely, one may consider another $\Z$-grading on the Drinfeld double $\D(Z,U,H)$  
given by $-\deg Z=\deg\oZ =1$ and $\deg \D(U,H)=0$. 
Then
\begin{align}\label{eq:triangular decomposition of DWV}
\D(Z,U,H)\simeq\BV(Z)\ot\,\D(U,H)\,\ot\BV(\oZ)
\end{align}
yields a new triangular decomposition on $\D(V,H)$. Hence the simple $\D(Z,U,H)$-modules can be constructed 
from the simple $\D(U,H)$-modules as before. 
Of course, the latter can also be described by the same proceeding. 
Then, we have the functors 
$$
\xymatrix{{}_{\D(H)}\cM \ar@<1ex>[rr]^{\fM^{U}_{H}} \ar@<-1ex>[rr] _{\fL^{U}_{H}} && {}_{\D(U,H)}\cM 
\ar@<1ex>[rr]^{\fM^{Z}_{\BV(U)\#H}} \ar@<-1ex>[rr]_{\fL^{Z}_{\BV(U)\#H}}& 
&{}_{\D(Z,U,H)}\cM
}
$$
For instance, the Verma module associated with the simple module
$\fL^U_{H}(\lambda)$ is 
\begin{align}\label{eq:verma prima}
\fM^{Z}_{\BV(U)\#H}(\fL^U_{H}(\lambda))=\D(Z,U,H)\ot_{\D(U,H)\ot\BV(\oZ)}\fL^U_H(\lambda),
\end{align}
where we consider $\oZ$ acting by zero on $\fL^U_H(\lambda)$. 
Observe that $\fM^{Z}_{\BV(U)\#H}(\fL^U_{H}(\lambda))$ is not necessarily isomorphic to the Verma module $\fM^{V}_{H}(\lambda)$ 
defined in \eqref{eq:verma}. 
Nevertheless, we show that their heads are isomorphic. This allows us to 
construct the simple modules in a recursive way. 

\begin{lema}\label{le:M-composicion} Keeping the notation above, we have
 $\fM^{Z}_{\BV(U)\#H}\circ \fM^{U}_{H} \simeq \fM^{V}_{H}$.
\end{lema}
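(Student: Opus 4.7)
The plan is to apply transitivity of induction once the ``positive'' subalgebras are correctly matched on both sides. Concretely, it suffices to show that, under the Hopf algebra equality $\D(V,H)=\D(Z,U,H)$ coming from $\BV(V)\#H\simeq\BV(Z)\#(\BV(U)\#H)$, one has $\D^{\geq0}(V,H)=\BV(\oZ)\#\D^{\geq0}(U,H)$ as subalgebras. Once this identification is in place, the argument is formal.

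For the identification, dualize the bosonization iso to obtain
\[
\BV(\oV)\#H^{*\op}\simeq(\BV(V)\#H)^{*\op}\simeq(\BV(Z)\#(\BV(U)\#H))^{*\op}\simeq\BV(\oZ)\#(\BV(\oU)\#H^{*\op}),
\]
the last step being the iterated dual bosonization. The leftmost algebra sits inside $\D^{\geq0}(V,H)=\BV(\oV)\#\D(H)$ via $H^{*\op}\hookrightarrow\D(H)$, while the rightmost contains $\BV(\oZ)$ and $\BV(\oU)$ as subalgebras. Hence $\BV(\oZ),\BV(\oU),\D(H)\subseteq\D^{\geq0}(V,H)$, which forces $\BV(\oZ)\#\D^{\geq0}(U,H)\subseteq\D^{\geq0}(V,H)$; equality follows from the dimension identity $\dim\BV(\oV)=\dim\BV(Z)\cdot\dim\BV(\oU)$, itself a consequence of the bosonization iso applied on both the covariant and contravariant sides.

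Transitivity of induction then yields, for any $N\in{}_{\D(H)}\cM$:
\begin{align*}
\fM^V_H(N)&=\D(V,H)\otimes_{\D^{\geq0}(V,H)}N=\D(Z,U,H)\otimes_{\BV(\oZ)\#\D^{\geq0}(U,H)}N\\
&\simeq\D(Z,U,H)\otimes_{\BV(\oZ)\#\D(U,H)}\bigl(\BV(\oZ)\#\D(U,H)\otimes_{\BV(\oZ)\#\D^{\geq0}(U,H)}N\bigr).
\end{align*}
A short calculation using the triangular decomposition $\D(U,H)=\BV(U)\#\D^{\geq0}(U,H)$ together with the bosonization relations in $\BV(\oZ)\#\D(U,H)$ shows that the inner induction is naturally isomorphic, as a $\BV(\oZ)\#\D(U,H)$-module, to $\fM^U_H(N)$ extended by the trivial (counit) action of $\BV(\oZ)$; concretely, every element reduces to a sum of $v\otimes n$ with $v\in\BV(U)$, $n\in N$, and one checks $\alpha\cdot(v\otimes n)=\e(\alpha)\,v\otimes n$ for $\alpha\in\BV(\oZ)$ by commuting $\alpha$ past $v$ via the bosonization relations. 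Substituting produces the desired $\fM^V_H(N)\simeq\fM^Z_{\BV(U)\#H}(\fM^U_H(N))$, with naturality in $N$ inherited from the naturality of each induction. The main obstacle is the subalgebra identification, which depends on carefully interpreting the iterated dual bosonization inside $\D(V,H)$ and matching it with $\D^{\geq0}(V,H)$.
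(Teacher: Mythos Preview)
Your approach is correct but takes a genuinely different route from the paper. You argue via transitivity of induction after identifying the positive subalgebras, whereas the paper gives a short direct argument: both $\fM^{Z}_{\BV(U)\#H}(\fM^{U}_{H}(N))$ and $\fM^{V}_{H}(N)$ are generated by $N$ as $\D(V,H)$-modules, and since $\oW\subseteq\oZ$ and $\oU$ both kill $N$ inside the iterated Verma, so does $\oV=\oW\oplus\oU$; the universal property of $\fM^{V}_{H}(N)$ then yields a surjection $\eta_N:\fM^{V}_{H}(N)\twoheadrightarrow\fM^{Z}_{\BV(U)\#H}(\fM^{U}_{H}(N))$, and a dimension comparison via $\BV(V)\otimes N=\BV(Z)\otimes\BV(U)\otimes N$ shows it is an isomorphism, with naturality immediate from $\eta_N|_N=\id_N$.

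Your structural route works, but several of the steps you sketch need more care than you give them. The identification $\D^{\geq0}(V,H)=\BV(\oZ)\#\D^{\geq0}(U,H)$ requires knowing that the $\D(U,H)$-coaction on $\oZ$ factors through $\D^{\geq0}(U,H)$; this holds because that coaction is defined via $R^{-1}$ and hence lands in $(\BV(U)\#H)^{*\op}\subset\D^{\geq0}(U,H)$, but you do not say so. Likewise, your claim that $\alpha\cdot(v\otimes n)=\e(\alpha)\,v\otimes n$ ``by commuting $\alpha$ past $v$'' is not straightforward from the bosonization relations alone: the cleanest way to get the inner induction isomorphism is to build the map to $\Inf_{\D(U,H)}^{\BV(\oZ)\#\D(U,H)}\fM^U_H(N)$ by Frobenius reciprocity and then invoke Nichols--Zoeller freeness of $\BV(\oZ)\#\D(U,H)$ over the Hopf subalgebra $\BV(\oZ)\#\D^{\geq0}(U,H)$ to match dimensions. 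Once these points are filled in, your argument is sound and more categorical; the paper's version simply sidesteps all of this bookkeeping.
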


\begin{proof} 
Let $N$ be a finite-dimensional $\D(H)$-module. Then both  
$\fM^{Z}_{\BV(U)\#H}\big( \fM^{U}_{H}(N)\big)$ and $\fM^{V}_{H}(N)$ are generated by $N$ 
as $\D(V,H)$-modules. Besides, by definition 
$\overline{Z}$ and $\overline{U}$ act trivially on $N$ inside $\fM^{Z}_{\BV(U)\#H}\big( \fM^{U}_{H}(N)\big)$. 
Since $\overline{Z}$ contains $\overline{W}$ as a $\D(H)$-submodule, it follows that 
$\oV = \overline{W}\oplus \overline{U}$ also acts trivially on it. Thus, 
there exists a $\D(V,H)$-module epimorphism $\eta_{N}: \fM^{V}_{H}(N) \twoheadrightarrow \fM^{Z}_{\BV(U)\#H}\big( \fM^{U}_{H}(N)\big)$,
which is the identity on $N$. 
By \S\ref{subsub:verma} \ref{item:subsub:verma a} and \eqref{eq:DVH BZ DUH}, we have that
$$ 
\Res\big(\fM^{Z}_{\BV(U)\#H}\big( \fM^{U}_{H}(N)\big)\big) \simeq \BV(Z)\ot \BV(U)\ot N \simeq \BV(V)\ot N \simeq\Res\big(\fM^{V}_{H}(N)\big),
$$
that is both objects have the same dimension. This implies that  
$\eta_{N}$ is in fact an isomorphism. 
Moreover, as $\eta|_{N} = \id_{N}$, we see that $\eta_{Y}\circ \fM^{V}_{H}(f) = \fM^{Z}_{\BV(U)\#H}\big( \fM^{U}_{H}(f)\big) \circ \eta_{X}$ for any
morphism $f:X\to Y$ in ${}_{\D(H)}\cM$. Hence, $\eta$ defines a natural isomorphism between both functors.
\end{proof}

Since $\D(Z,U,H) \simeq \D(V,H)$ as Hopf algebras,
we may consider the Verma module $\fM^{Z}_{\BV(U)\#H}(\fL_H^U(\lambda))$ and its head $\fL^{Z}_{\BV(U)\#H}(\fL_H^U(\lambda))$  as graded $\D(V ,H)$-modules 
with the unique grading satisfying that $\deg\lambda=0$ thanks to \cite{GG}. 
We prove next that there is a commutative diagram 
\begin{align*}
\xymatrix{
\fM_H^V(\lambda)
\ar@{->>}[rr]\ar@{->>}[d]
&&\fL_H^V(\lambda)\\
\fM_{\BV(U)\#H}^Z(\fL_H^U(\lambda))
\ar@{->>}[rr]
&&\fL_{\BV(U)\#H}^Z(\fL_H^U(\lambda))\ar@{-}_{\simeq}[u]
}
\end{align*}
whose arrows are epimorphisms of $\D(V ,H)$-modules.

\begin{theorem}\label{thm:simples W op V}
Let $\lambda\in\Irr{}_{\D(H)}\cM$. Then $\fM^{Z}_{\BV(U)\#H}(\fL_H^U(\lambda))$ and $\fL_{\BV(U)\#H}^Z(\fL_H^U(\lambda))$ are both 
highest-weight $\D(V ,H)$-modules with
highest-weight $\lambda$. Moreover, 
\begin{align*}
\fL^{V}_{H}(\lambda)\simeq \fL_{\BV(U)\#H}^Z(\fL_H^U(\lambda)) 
\end{align*} 
and the neous components of $\fL^{V}_{H}(\lambda)$ and $\fL^U_{H}(\lambda)$ satisfy as $\D(H)$-modules
\begin{align*}
\fL^{V}_{H}(\lambda)_{n}=\bigoplus_{\substack{n=-i-k(j+1)\\ i,j,k\geq 0}}\BV^k(\ad_c\BV^j(U)(W))\fL^U_{H}(\lambda)_{-i}\ .
\end{align*}
\end{theorem}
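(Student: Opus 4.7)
The plan is to transport the canonical surjection $\pi\colon\fM^U_H(\lambda)\twoheadrightarrow\fL^U_H(\lambda)$ through the functor $\fM^Z_{\BV(U)\#H}$, combine with Lemma \ref{le:M-composicion} to realize $\fM^V_H(\lambda)$ as the domain, and then invoke the uniqueness of the simple top from Remark \ref{obs:highest weight module}.

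Since $\fM^Z_{\BV(U)\#H}$ is induction preceded by inflation, it is right exact. Applying it to $\pi$ and using Lemma \ref{le:M-composicion} produces an epimorphism of $\D(Z,U,H)\simeq\D(V ,H)$-modules
\[
\fM^V_H(\lambda)\simeq\fM^Z_{\BV(U)\#H}(\fM^U_H(\lambda))\twoheadrightarrow\fM^Z_{\BV(U)\#H}(\fL^U_H(\lambda)).
\]
Its restriction to $\lambda\subset\fM^V_H(\lambda)$ is the natural embedding of $\lambda$ as the degree-zero $\D(H)$-submodule of $\fL^U_H(\lambda)$, hence is injective. Consequently, the image of $\lambda$ in the target is a simple $\D(H)$-submodule that generates the whole module (because $\lambda$ generates $\fM^V_H(\lambda)$) and is annihilated by $\oV$ (because $\oV\cdot\lambda=0$ in the source and the map is $\D(V ,H)$-equivariant). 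This shows that $\fM^Z_{\BV(U)\#H}(\fL^U_H(\lambda))$, and a fortiori its simple quotient $\fL^Z_{\BV(U)\#H}(\fL^U_H(\lambda))$, is a highest-weight $\D(V ,H)$-module of weight $\lambda$. Remark \ref{obs:highest weight module} then yields
\[
\fL^Z_{\BV(U)\#H}(\fL^U_H(\lambda))=\head(\fM^Z_{\BV(U)\#H}(\fL^U_H(\lambda)))\simeq\fL^V_H(\lambda).
\]

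For the formula describing the $n$-th homogeneous component, I would use the triangular decomposition \eqref{eq:triangular decomposition of DWV} to identify $\fM^Z_{\BV(U)\#H}(\fL^U_H(\lambda))\simeq\BV(Z)\ot\fL^U_H(\lambda)$ as a $\D(U,H)$-module. The finer decomposition $Z=\bigoplus_{j\geq 0}\ad_c\BV^j(U)(W)$, together with the inclusion $\ad_c\BV^j(U)(W)\subset\BV^{j+1}(V)$, places a length-$k$ product of elements of $\ad_c\BV^j(U)(W)$ acting on $\fL^U_H(\lambda)_{-i}$ into $\D(V ,H)$-old-degree $-i-k(j+1)$. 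By \cite{GG}, the grading in which $\lambda$ sits in degree zero is unique, so the isomorphism $\fL^V_H(\lambda)\simeq\fL^Z_{\BV(U)\#H}(\fL^U_H(\lambda))$ is grading-preserving; passing to the graded simple quotient then yields the claimed $\D(H)$-module decomposition.

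The step I expect to require the most care is matching the two gradings on $\D(V ,H)\simeq\D(Z,U,H)$ --- the old one with $\deg V=-1$ and the new one with $\deg Z=-1$ and $\deg\D(U,H)=0$ --- so that the stratification of $\BV(Z)$ by the pieces $\ad_c\BV^j(U)(W)$ (natural in the old degree) combines consistently with the stratification of $\fL^U_H(\lambda)$ to produce the stated formula. Once the uniqueness argument of \cite{GG} is in hand, the remaining bookkeeping is routine.
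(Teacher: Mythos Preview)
Your proposal is correct and follows essentially the same approach as the paper. The only difference is in packaging: the paper verifies directly, via the triangular decomposition $\D(V,H)=\BV(Z)\,\D(U,H)\,\BV(\oZ)$, that $\lambda$ is a highest-weight generating $\fM^{Z}_{\BV(U)\#H}(\fL_H^U(\lambda))$ (checking that $\oW\subseteq\oZ$ and $\oU$ kill $\lambda$, and that $\D(V,H)\lambda=\BV(Z)\fL_H^U(\lambda)$), whereas you obtain the same epimorphism from $\fM_H^V(\lambda)$ by pushing $\pi$ through the right-exact functor $\fM^{Z}_{\BV(U)\#H}$ and invoking Lemma~\ref{le:M-composicion}. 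Both routes then finish with Remark~\ref{obs:highest weight module}, and the grading formula is handled identically in both.
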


\begin{proof}
By definition, we know that $\oW\subseteq\oZ$ and $\oU$ act by zero on 
$\lambda=1\ot\lambda$ inside $\fM^{Z}_{\BV(U)\#H}(\fL_H^U(\lambda))$. 
That is, $\lambda$ is a highest-weight. Besides, this weight generates $\fM^{Z}_{\BV(U)\#H}(\fL_H^U(\lambda))$, and hence also 
$\fL^{Z}_{\BV(U)\#H}(\fL_H^U(\lambda))$, as $\D(V ,H)$-module. 
For, 
\begin{align*}
\D(V ,H)\lambda&=\BV(Z)\D(U,H)\BV(\oZ)\lambda\\
&=\BV(Z)\D(U,H)\lambda=\BV(Z)\fL^U_{H}(\lambda) = \fM^{Z}_{\BV(U)\#H}(\fL_H^U(\lambda)). 
\end{align*}
Then, by the characterization of the highest-weight modules,
$\fM^{Z}_{\BV(U)\#H}(\fL_H^U(\lambda))$ and $\fL^{Z}_{\BV(U)\#H}(\fL_H^U(\lambda))$ are quotients of $\fM^{V}_{H}(\lambda)$. 
Since $\fL^{Z}_{\BV(U)\#H}(\fL_H^U(\lambda))$ is a simple module, we get
$\fL^{Z}_{\BV(U)\#H}(\fL_H^U(\lambda))\simeq\fL^{V}_{H}(\lambda)$ by Remark \ref{obs:highest weight module}.
Finally, by looking at the associated gradation, we deduce the second assertion.
\end{proof}

We have an analogous result for the socle. 

\begin{prop}\label{prop:soc W op V}
Let $\lambda\in\Irr{}_{\D(H)}\cM$. Then $\fS^{V}_{H}(\lambda)\simeq \fS_{\BV(U)\#H}^Z(\fS_H^U(\lambda))$ as $\D(V,H)$-modules.
\end{prop}

\begin{proof}
Let $\lambda_Z=\BV^{n_Z}(Z)$ and $\lambda_U=\BV^{n_U}(U)$ be the 
homogeneous components of maximum degree of $\BV(Z)$ and $\BV(U)$, respectively. 
These are one-dimensional and simple as modules over $\D(U,H)$ and $\D(H)$, respectively, 
recall  \S\ref{subsubsec:Def-Nichols}. In particular, $U\cdot\lambda_Z=0$. 
Moreover, $\lambda_Z\lambda_U\simeq\lambda_V$ as $\D(H)$-modules by \eqref{eq:DVH BZ DUH}. 

Using the triangular decomposition \eqref{eq:triangular decomposition of DWV}, 
Theorem \ref{thm:pogo-vay soc}  says that $\fS_{\BV(U)\#H}^Z(\fS_H^U(\lambda))$ 
is the unique simple lowest-weight $\D(Z,U,H)$-module with lowest-weight $\lambda_Z\ot\fS_H^U(\lambda)$. 
That is, $\lambda_Z\ot\fS_H^U(\lambda)$ is a simple $\D(U,H)$-submodule generating 
$\fS_{\BV(U)\#H}^Z(\fS_H^U(\lambda))$ and  $Z\cdot\big(\lambda_Z\ot\fS_H^U(\lambda)\big)=0$. 
In particular, $W\cdot\big(\lambda_Z\ot\fS_H^U(\lambda)\big)=0$.

Also by Theorem \ref{thm:pogo-vay soc}, $\fS_H^U(\lambda)$ is the unique simple lowest-weight 
$\D(U,H)-$module with lowest-weight $\lambda_U\lambda$. 
That is, $\lambda_U\lambda$ is a simple $\D(H)$-submodule generating $\fS_H^U(\lambda)$ and 
$U\cdot\lambda_U\lambda=0$. Hence $U\cdot\big(\lambda_Z\ot\lambda_U\lambda\big)=0$. 
In fact, if $u\in U$, then
$u\cdot\big(\lambda_Z\ot\lambda_U\lambda\big)=u\cdot\lambda_Z\ot\lambda_U\lambda+u\_{-1}\cdot\lambda_Z\ot u\_{0}\cdot(\lambda_U\lambda)=0$; 
for the first equality recall \eqref{eq:verma iso} and the second one follows from the first paragraph.

In conclusion, $\fS_{\BV(U)\#H}^Z(\fS_H^U(\lambda))$ is a simple $\D(V,H)$-module 
with lowest-weight $\lambda_Z\ot\lambda_U\lambda\simeq\lambda_V\lambda$. 
Again by Theorem \ref{thm:pogo-vay soc} it should be isomorphic to $\fS^{V}_{H}(\lambda)$ as desired.
\end{proof}

We stress that Theorems \ref{thm:pogo-vay} and \ref{thm:pogo-vay soc}, 
Corollary \ref{cor:highest weight of the socle}, Remarks \ref{obs:highest weight module}, \ref{obs:simple simple} and \ref{obs:verma simple} 
and \S\ref{subsub:verma} also apply to $\fL^{Z}_{\BV(U)\#H}(\fL_H^U(\lambda))$ and $\fM^{Z}_{\BV(U)\#H}(\fL_H^U(\lambda))$, since one may take 
$\BV(U)\#H$ and $Z$ to play the role of $H$ and $V$, respectively. 
We will make use of these remarks under these generalized hypotheses when we consider Nichols algebras over the dihedral groups $\dm$. 
In such a case we refer to them as \textit{the recursive version}.

\begin{obs}\label{obs:Z igual W}
The coaction $(\pi_{\BV(U)\#H}\ot\id)\circ\Delta_{\BV(V)\#H}$ and the adjoint action of $H$ on 
$W\subset Z$ coincide with its structure in $\ydh$. Moreover, if $c_{U,W}\circ c_{W,U}=\id_{W\ot U}$, 
then $Z=W$ and hence $\BV(Z)=\BV(W)$ \cite[Remark 2.5]{AA}. In this case, we have
\begin{align*}
\BV(V)\#H\simeq\BV(W)\#(\BV(U)\#H).
\end{align*}
On the other hand, if $W$ is a simple $\D(H)$-module, then $Z\simeq\fL^U_{H}(W)$ by \cite[Proposition 2.10]{AA}. 
Thus, $W$ is a rigid $\D(U,H)$-module if $c_{U,W}\circ c_{W,U}=\id_{W\ot U}$.
\end{obs}

\begin{obs}\label{obs:Z igual W -- homogeneous}
Assume that $Z=W=\fL_H^U(W)$ is simple and rigid; \textit{e.g.} when $c_{U,W}\circ c_{W,U}=\id_{W\ot U}$ by Remark \ref{obs:Z igual W}. 
The Nichols algebras appearing in the present work satisfy this property. 
By applying Theorem \ref{thm:rigid tensor}, we have that $W^{\ot k}$ is a direct sum of simples rigid modules and hence so is 
$\BV(W)=\BV(Z)$. Therefore $\fM_{\BV(U)\#H}^W(\fL_H^U(\lambda))$ and its head are semisimple as $\D(U,H)$-modules for any $\lambda\in\Irr{}_{\D(H)}\cM$. 
The homogeneous components of its head satisfy 
for all $n\leq 0$ that 
$$
\fL^{V}_{H}(\lambda)_{n}=\bigoplus_{k= 0}^{-n}\BV^k(W)\fL^U_{H}(\lambda)_{n+k}.
$$
Finally, we point out that the Hopf subalgebra generated by $W$ and $H$ is $\D(W,H)$.
\end{obs}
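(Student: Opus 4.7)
The plan is to establish the four assertions of the remark in order: the semisimple-rigid decomposition of the tensor powers $W^{\ot k}$ (and hence of $\BV(W)=\BV(Z)$) as $\D(U,H)$-modules; the semisimplicity of the recursive Verma module $\fM^W_{\BV(U)\#H}(\fL^U_H(W))$ and of its head; the closed-form expression for the homogeneous pieces of $\fL^V_H(\lambda)$; and the identification of the sub-Hopf-algebra generated by $W$ and $H$ with $\D(W,H)$.

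For the first stage, I would start by noting that rigidity of $\fL^U_H(W)=W$ is precisely $U\cdot W = 0 = \oU\cdot W$. Using the coproduct of $u\in U\subset \BV(U)\#H$ (of the form $u\ot 1 + u\_{-1}\ot u\_{0}$) and the analogous coproduct for $\alpha\in \oU\subset \D^{\geq 0}(U,H)$ (with $\alpha\_{-1}\in \D(H)$ and $\alpha\_{0}\in\oU$), a straightforward induction on $k$ shows that both $U$ and $\oU$ annihilate $W^{\ot k}$: the $u\ot 1$-summand vanishes by the inductive hypothesis on $W^{\ot(k-1)}$, while the $u\_{-1}\ot u\_{0}$-summand vanishes because $u\_{0}\cdot W=0$, and similarly for $\oU$. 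Since $\D(H)$ is semisimple, each $W^{\ot k}$ decomposes as a direct sum of simple $\D(H)$-modules, and the vanishing above promotes every simple summand to a rigid simple $\D(U,H)$-module; Theorem \ref{thm:rigid tensor} applied inductively to the pair $(W,W^{\ot(k-1)})$ makes this decomposition explicit. As $\BV^k(W)$ is a quotient of $W^{\ot k}$ in ${}_{\D(U,H)}\cM$ and both semisimplicity and rigidity pass to quotients, $\BV(W)$ is semisimple rigid as $\D(U,H)$-module.

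For the second stage, the recursive triangular decomposition yields $\Res\bigl(\fM^W_{\BV(U)\#H}(\fL^U_H(W))\bigr)\simeq \bigoplus_k \BV^k(W)\ot W$ as $\D(U,H)$-modules; each summand is the tensor product of a rigid simple ($W$) with a semisimple ($\BV^k(W)$, by Stage 1), so another appeal to Theorem \ref{thm:rigid tensor} yields semisimplicity of the whole, and the head is semisimple as a quotient. For the third stage, I would specialize Theorem \ref{thm:simples W op V}: the hypothesis $Z=W$ is equivalent to $\ad_c U(W)=0$, so the derivation property of $\ad_c$ forces $\ad_c\BV^j(U)(W)=0$ for $j\geq 1$, while $\ad_c\BV^0(U)(W)=W$. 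The index set $\{(i,j,k)\mid n=-i-k(j+1)\}$ then collapses to $j=0$, i.e.\ $i=-n-k$ with $0\leq k\leq -n$, recovering exactly the displayed formula.

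The fourth stage uses the braided bosonization isomorphism $\BV(V)\#H\simeq \BV(W)\#(\BV(U)\#H)$: the inclusion $\BV(W)\#H\hookrightarrow \BV(V)\#H$ lifts to a Hopf algebra embedding $\D(W,H)\hookrightarrow \D(V,H)$ compatible with the triangular decompositions of both sides, identifying the Hopf subalgebra generated by $W$, $\oW$, $H$ and $H^*$ with $\D(W,H)$. The main obstacle I anticipate is the careful verification in Stage 1: although the idea is clean, one has to be meticulous about which coproduct (in $\BV(U)\#H$ versus in $\D^{\geq 0}(U,H)\subset \D(U,H)$) governs the action on tensor products, and to ensure that the rigidity-propagation step does not covertly rely on the commutation rule \eqref{eq:commutation rules gral}.
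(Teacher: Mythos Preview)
Your proposal is correct and follows essentially the same route as the paper's inline justifications: invoke Theorem~\ref{thm:rigid tensor} for the semisimple-rigid decomposition of $W^{\ot k}$ and $\BV(W)$, then specialize Theorem~\ref{thm:simples W op V} using $\ad_c\BV^j(U)(W)=0$ for $j\geq1$. Your direct coproduct induction in Stage~1 is a welcome elaboration---it makes the rigidity of each summand transparent without the implicit dimension comparison hidden behind a bare citation of Theorem~\ref{thm:rigid tensor}---and your anticipated obstacle about which coproduct governs the tensor action is resolved exactly as you suspect, since $U\subset\BV(U)\#H$ and $\oU\subset\D^{\geq0}(U,H)$ are sub-Hopf-algebras of $\D(U,H)$.
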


\subsubsection{A recursive example}\label{subsub:application 2n}\ 

\smallbreak

Keep the notation and the assumptions of Remark \ref{obs:Z igual W -- homogeneous}. 
Assume further that $W\in\ydh$ is a simple two-dimensional module with basis $\{w_+,w_-\}$ and braiding $-flip$. 
For instance, these hypotheses are satisfied by exterior algebras of vector spaces of even dimension; 
such is the case for $H=\Bbbk \dm$. 
Then we have that $\fL_H^U(W)=W$ is rigid and $\BV(W)=\bigwedge W=\ku\oplus W\oplus\ku\{w_+w_-\}$. 

Let $\lambda\in\Irr{}_{\D(H)}\cM$ be such that $W\ot\lambda$ is semisimple. 
We explain below how to describe the socle of $\fM_{\BV(U)\#H}^W(\fL_H^U(\lambda))$ 
using the recursive version of Corollary \ref{cor:highest weight of the socle}. See Figure \ref{fig:V dim even}.

\begin{figure}[!h]
\begin{tikzpicture}    

\draw[dotted] (-4.5,1) -- (5.5,1) node at (5.95,1) {$0$};

\draw[dotted] (-4.5,0) --  (5.85,0) node[fill=white] at (5.85,0) {$-1$} ;

\draw[dotted] (-4.5,-1) -- (5.85,-1) node[fill=white] at (5.85,-1) {$-2$};

\draw[->] (-3,.75) to [out=18,in=162]
 node [midway,above,sloped] {$\ot\fL^U_H(\lambda)$} (2,.75);

\node at (-6.35,0) {};

\begin{scope}[xshift=-3.5cm]

\node[circle,fill=black,inner sep=0pt, minimum width=5pt,label=above:{$\ku$}] at (0,1) {};

\node[circle,fill=black,inner sep=0pt, minimum width=5pt,label={[fill=white]left:$W$}] at (0,0) {};

\node[circle,fill=black,inner sep=0pt, minimum width=5pt,label=below:{$\lambda_W$}] at (0,-1) {};
\end{scope}

\begin{scope}[xshift=2.5cm,xscale=.6]

\fill[fill=gray!50] 
(-.15,-1.15) to [out=315,in=225] 
(.15,-1.15) to
[out=45,in=-45] 
(.15,-.85) --
(-.85,.15) to
[out=135,in=45]
(-1.15,.15) to
[out=225,in=135]
(-1.15,-.15) --
(-.15,-1.15);

\node[circle,fill=black,inner sep=0pt, minimum width=5pt,label=above:{$\fL^U_H(\lambda)$}] at (0,1) {};

\node[circle,fill=black,inner sep=0pt, minimum width=5pt] at (1.75,0) {};

\node[circle,fill=black,inner sep=0pt, minimum width=5pt] at (1,0) {};

\node[circle,fill=black,inner sep=0pt, minimum width=5pt] at (.5,0) {};

\node[circle,fill=black,inner sep=0pt, minimum width=5pt] at (.1,0) {};

\node[circle,fill=black,inner sep=0pt, minimum width=5pt] at (-.2,0) {};

\node[circle,fill=black,inner sep=0pt, minimum width=5pt,label={[fill=white,,label distance=4pt]left:$\fL^U_H(\mu)$}] at (-1,0) {};

\node[circle,fill=black,inner sep=0pt, minimum width=5pt,label=below:{$\fL^U_H(\lambda_W\lambda)$}] at (0,-1) {};
\end{scope}

\end{tikzpicture}
\caption{The dots represent the simple $\D(U,H)$-summands of $\BV(W)$ and 
$\fM^{W}_{\BV(U)\#H}(\fL^{U}_H(\lambda))$. 
Their degrees are indicated on the right. Those in the shadow region form its socle in the case that $\deg\fL_H^U(\mu)=-1$.}
\label{fig:V dim even}
\end{figure}

First, we observe that $W\ot\fL_H^U(\lambda)\simeq\fL_H^U(W\ot\lambda)$ by Theorem \ref{thm:rigid tensor}. 
Hence, as $\D(U,H)$-module, $\fM_{\BV(U)\#H}^W(\fL_H^U(\lambda))\simeq\fL_H^U(\lambda)\oplus\fL_H^U(W\ot\lambda)\oplus\fL_H^U(\lambda_W\lambda)$ 
is semisimple. Pick the homogeneous simple $\D(U,H)$-module 
$\fL_H^U(\mu)$ of $\fM_{\BV(U)\#H}^W(\fL_H^U(\lambda))$ of minimum degree such that $\oW\cdot\fL_H^U(\mu)=0$. 
Note that it is enough to check for which homogeneous summand $\fL_H^U(\mu)$ it holds that $\oW\cdot\mu=0$, since 
the Verma module is semisimple and the action of $\oW$ is a morphism of $\D(U,H)$-modules, by the recursive version of \S\ref{subsub:verma}. 
A similar reasoning can be made using the recursive versions of Remarks \ref{obs:simple simple} and \ref{obs:verma simple}. 
That is, it is enough to check that the elements $\Phi$ act trivially on $\lambda$ (resp., $\Theta$ acts non trivially on $\lambda$) 
to conclude that they also act trivially (resp., non-trivially) on all $\fL_H^U(\lambda)$. Thus, as in \S\ref{subsub:application 2}, we have three possibilities: 
\begin{itemize}
 \item If $\deg\fL_H^U(\mu)=0$, then $\mu=\lambda$ and $\fL_H^V(\lambda)=\fM^{W}_{\BV(U)\#H}(\fL^{U}_H(\lambda))$ 
 is simple and projective as $\D(V,H)$-module.

\item If $\deg\fL_H^U(\mu)=-1$, then
the socle of $\fM_{\BV(U)\#H}^W(\fL_H^U(\lambda))$ decomposes into $\fL_H^U(\mu)[-1]\oplus\fL_H^U(\lambda_W\lambda)[-2]$ as $\D(U,H)$-module. 
To find $\mu$, one has to decompose $W\ot\lambda$ 
as a direct sum of weights and then determine the one that is annihilated by $\oW$. In this case, 
one may deduce that $W\ot\lambda=\mu\oplus\overline
{\lambda}$ for some weight $\overline
{\lambda}$ and hence $\fL^V_H(\lambda)=\fL^U_H(\lambda)\oplus\fL^U_H(\overline
{\lambda})[-1]$ as $\D(U,H)$-modules. We leave the computation for the interested reader.

\item If $\deg\fL_H^U(\mu)=-2$, then $\fL_H^U(\lambda_W\lambda)[-2]$ 
is the socle of $\fM_{\BV(U)\#H}^W(\fL_H^U(\lambda))$ and hence it is a simple $\D(V,H)$-module over which $W$ and $\oW$ act trivially.
\end{itemize}

\section{The dihedral groups framework} \label{sec:dihedral groups}\ 

From now on, we fix a natural number $m\geq12$ divisible by $4$ and an 
$m$-th primitive root of unity $\omega$. We also set $n=\frac{m}{2}$. 

The dihedral group 
of order $2m$ is presented by generators and relations by 
\begin{align*}
\DD_m=\langle x,y \mid x^2,\ y^m,\  xyxy\rangle.
\end{align*}
It has $n+3$ conjugacy classes:
$\cO_{e}=\{e\}$ with $e$ the identity, $\cO_{y^{n}}=\{y^{n}\}$, 
$\cO_{x}= \{xy^{j}:\ j \text{ even} \}$, $\cO_{xy}= \{xy^{j}:\ j \text{ odd} \}$
and $\cO_{y^{i}}=\{y^{i}, y^{-i}\}$ for $1\leq i\leq n-1$.

The algebra of functions $\ku^{\DD_m}$ is the dual Hopf algebra of $\ku\DD_m$. 
We denote by $\{\delta_t\}_{t\in\DD_m}$ the dual basis of the basis of $\ku\DD_m$
given by the group-like elements, {\it i.e.} $\delta_t(s)=\delta_{t,s}$ for all $t,s \in \dm$.
The comultiplication and the counit of these elements are 
$\Delta(\delta_t)=\sum_{s\in\dm}\delta_s\ot\delta_{s^{-1}t}$ and $\e(\delta_{t})= \delta_{t,e}$ 
for all $t\in\DD_m$, respectively.

We denote by $\D\DD_m$ the Drinfeld double of $\ku\DD_m$. Since $\ku^{\DD_m}$ is a commutative algebra, 
$\ku^{\DD_m} = \big(\ku^{\DD_m}\big)^{\op}$ and consequently $\ku^{\DD_m}$ and $\ku\DD_m$ 
are Hopf subalgebras of $\D\DD_m$. Thus, the algebra structure of $\D\DD_m$ is completely determined by
the equality
\begin{align*}
&&&&\delta_{tst^{-1}}\,t=t\,\delta_{s}&&\quad\text{ for all }  s,t\in\DD_m.
\end{align*}
In this case, the
$R$-matrix reads $R=  \sum_{t\in\DD_m}\delta_t\ot t \in \D\dm\ot \D\dm$.

\subsection{The weights of \texorpdfstring{$\D\dm$}{DDm}}\label{simple DDDm modules}\

It is well-known that the simple modules over the Drinfeld double of a group algebra are classified by 
the conjugacy classes of the group and irreducible representations of their centralizers, c.f. \cite{AG} 
and references therein. 
Namely, for $g\in\DD_m$, write $\cO_g$ for its conjugacy class and $\cC_g$ for its centralizer in $\DD_{m}$. 
Let $(U,\varrho)$ be an irreducible representation of $\cC_g$. 
The $\ku\DD_m$-module induced by $(U,\varrho)$,
\begin{align}\label{eq:Mgrho}
M(g,\varrho)=\Ind_{\cC_g}^{\DD_m}U=\ku\DD_m\ot_{\ku\cC_g}U,
\end{align}
is a $\D\DD_m$-module with the $\Bbbk^{\DD_{m}}$-action defined by  
$$f\cdot (t\,\ot_{\ku\cC_g} u)=\langle f,tgt^{-1}\rangle t\,\ot_{\ku\cC_g} u,\quad\mbox{for all $f\in\ku^{\DD_m}$, $t\in\DD_m$ and $u\in U$}.$$
Then the set $\Lambda$ consisting of  the modules 
$M(g,\varrho)$'s is a set of representative of simple $\D\dm$-modules up to isomorphism, that is
\begin{align*}
\Lambda=\Irr{}_{\D\dm}\cM.
\end{align*}
It is worth noting that a $\ku^{\DD_m}$-action on a vector space $V$ is the same as a $\DD_m$-grading.
In this sense, a left $\D\DD_{m}$-module $V$ 
(or equivalently a left Yetter-Drinfeld module over $\DD_{m}$) is a $\Bbbk\DD_{m}$-module with
a $\DD_m$-grading that is compatible with the conjugation in $\DD_m$.
In our example, we have that  $U$ is concentrated in degree $g$ and 
the $\DD_m$-degree of $t\,\ot_{\ku\cC_g} u$ in 
$M(g,\varrho)$ is $tgt^{-1}$. The action of $f\in \ku^{\DD_{m}}$ is performed via the evaluation on the degree. 
We will denote by $M[s]$ the homogeneous component of degree $s\in\dm$ of a  
$\D\DD_{m}$-module. Although this notation coincides with the shift of a grading, 
we believe that this would not confuse the reader since in the latter case $s$ is an integer and here 
is an element of $\dm$.

In the following, we recall the description of the simple $\D\dm$-modules
according to the set of conjugacy classes. We present them
by fixing a basis and by describing the action of $x$, $y$ and 
the $\DD_{m}$-grading. We use symbols like $\mm{w}$ to denote elements of a particular basis for each simple module.
For more details, see \cite{FG}.

\subsubsection{The modules $M(e,\varrho)$}\label{subsec:the mechis}
Let $e$  be the identity element in $\DD_{m}$. 
Since $\cC_{e}=\DD_{m}$, we use 
the simple representations of $\DD_m$ to describe the 
simple $\D\DD_m$-modules. 
According to the amount of conjugacy classes of $\DD_{m}$, 
these are $4$ one-dimensional, say $\chi_1$, $\chi_2$, $\chi_3$, $\chi_4$, and 
$n-1$ two-dimensional, which we denote by $\rho_{\ell}$, $1\leq \ell<n$. 
For $1\leq i\leq 4$ and $1\leq \ell \leq n-1$, the simple $\D\DD_m$-modules are:
 \smallskip
\begin{itemize}
 \item[$\triangleright$] $M(e,\chi_i)=\ku\{\mm{u_i}\}$ with
 $M(e,\chi_i)= M(e,\chi_i)[e]$  and 
 \begin{align*}
&x\cdot\mm{u_1}=\mm{u_1},& &y\cdot\mm{u_1}=\mm{u_1};&
 \\
&x\cdot\mm{u_2}=-\mm{u_2},& &y\cdot\mm{u_2}=\mm{u_2};&
 \\
&x\cdot\mm{u_3}=\mm{u_3},& &y\cdot\mm{u_3}=-\mm{u_3};&
 \\
&x\cdot\mm{u_4}=-\mm{u_4},& &y\cdot\mm{u_4}=-\mm{u_4}.& 
\end{align*}

 \smallskip
 
 \item[$\triangleright$] $M(e,\rho_\ell)=\ku\{\mm{+,\ell},\mm{-,\ell}\}$ with $M(e,\rho_\ell)= M(e,\rho_\ell)[e]$ and 
 \begin{align*}
 & x\cdot\mm{\pm,\ell}=\mm{\mp,\ell},& &y\cdot\mm{\pm,\ell}=\omega^{\pm\ell}\mm{\pm,\ell}.&
 \end{align*}
\end{itemize}
Note that  $M(e,\chi_1)$ is given by the counit of $\D\dm$. 
To shorten notation, we write $\mm{\pm}=\mm{\pm,\ell}$ when the parameter $\ell$ is clear from the context.

\subsubsection{The modules $M(y^{n},\varrho)$}\label{subsec: the mels} 
Since $m=2n$, the element $y^n$ is central in $\DD_m$. 
Therefore the simple $\D\DD_m$-modules associated with $y^n$ are given by the simple representations of $\DD_m$.
As $\DD_{m}$-modules they coincide with the ones given in \S\ref{subsec:the mechis}, but these 
are concentrated in degree $y^{n}$ instead of $e$.
Explicitly, for $1\leq i\leq 4$ and $1\leq\ell\leq n-1$ these are
 \smallskip
\begin{itemize}
 \item[$\triangleright$]  $M(y^n,\chi_i)=\ku\{\mm{u_i,n}\}$ 
 with $M(y^n,\chi_i) = M(y^n,\chi_i)[y^{n}]$ and 
$M(y^n,\chi_i)\simeq M(e,\chi_i)$ as $\DD_m$-modules via $\mm{u_i,n}\mapsto\mm{u_i}$.
 
 \smallskip
 
 \item[$\triangleright$] $M_\ell:=M(y^n,\rho_\ell)=\ku\{\mm{+,n,\ell},\mm{-,n,\ell}\}$ with 
 $M(y^n,\rho_\ell) = M(y^n,\rho_\ell)[y^{n}]$,
 \begin{align*}
 x\cdot\mm{\pm,n,\ell}=\mm{\mp,n,\ell}\quad\mbox{and}\quad y\cdot\mm{\pm,n,\ell}=\omega^{\pm\ell}\mm{\pm,n,\ell}.
 \end{align*}
\end{itemize}
Again, we write $\mm{\pm}=\mm{\pm,n,\ell}$ when both the parameters $\ell$ and $n$ are clear from the context.
The latter are the modules $M_\ell$ of \cite[\S2A1]{FG}.

\subsubsection{The modules $M(y^{i},\varrho)$}\label{subsec: the mikes} 
Let $1\leq i\leq n-1$. The conjugacy class of $y^i$ is $\{y^{i},y^{-i}\}$ and 
its centralizer $\cC_{y^{i}}$ is the subgroup $\langle y\rangle\simeq\Z_m$ 
whose simple representations are given by the characters
$\chi\_{k}(y)=\omega^k$ for $0\leq k\leq m-1$. So, the simple $\D\DD_m$-modules associated with $y^i$ are
\smallskip
\begin{itemize}
 \item[$\triangleright$] $M_{i,k}:=M(y^i,\chi\_{k})=\ku\{\mm{+,i,k},\mm{-,i,k}\}$ with $\mm{\pm, i,k}\in M_{i,k}[y^{\pm i}]$,
 \begin{align*}
 x\cdot\mm{\pm i,k}=\mm{\mp, i,k}\quad\mbox{and}\quad y\cdot\mm{\pm, i,k}=\omega^{\pm k}\mm{\pm, i,k}. 
 \end{align*} 
 \end{itemize}
Note that here the simple module is not concentrated in a single degree, in fact $\dim M_{i,k} = 2$ and 
$M_{i,k}= M_{i,k}[y^{-i}] \oplus M_{i,k}[y^{+i}]$. These are the modules $M_{i,k}$ of \cite[\S2A2]{FG}.
As before, we simply write $\mm{\pm}=\mm{\pm, i,k}$ when the context allows us to simplify notation.
Note that the simple modules $M_{\ell}$ can be describe as $M_{n,\ell}$, where the elements
are concentrated in degree $y^n=y^{-n}$.

\begin{notation}\label{notation: Mik Mell}
Given $1\leq i\leq n$ and $0\leq k\leq m-1$, we set $M_{i,k}=M(y^i,\chi\_{k})$ for $i\neq n$, and  $M_{n,k}=M(y^n,\rho_k)$.
\end{notation}

\subsubsection{The modules $M(x,\varrho)$}\label{subsec:simples x} 
The conjugacy class of $x$ is $\{xy^{2j}\mid j\in\Z_n\}$ and its centralizer is given 
by the subgroup $\langle x\rangle\oplus\langle y^n\rangle\simeq\Z_2\oplus\Z_2$. The  
irreducible representations are given by the characters 
$\sgn^s\ot\sgn^t$, $s,t\in\Z_2$, where $\sgn(x)=\sgn(y^n)=-1$ are the corresponding $\sgn$ representation
of the $\Z_{2}$ summand. 
Hence, the simple $\D\DD_m$-modules are
\smallskip
\begin{itemize}
 \item[$\triangleright$] $M_{0,s,t}:=M(x,\sgn^s\ot\sgn^t)=\ku\{\mm{j,0,s,t}\mid j\in\Z_n\}$ with 
 \begin{align*}
 &x\cdot\mm{0,0,s,t}=\sgn^s(x) \mm{0,0,s,t},&
 \\
 &x\cdot\mm{j,0,s,t}=\sgn^s(x)\sgn^t(y^n) \mm{n-j,0,s,t},\quad \text{ for all }j\neq0,&
 \\
 &y\cdot\mm{0,0,s,t}=\sgn^t(y^n) \mm{n-1,0,s,t},&
 \\
 &y\cdot\mm{j,0,s,t}=\mm{j-1,0,s,t},\quad \text{ for all }j\neq0,&\\
 &\text{and }\mm{j,0,s,t}\in M(x,\sgn^s\ot\sgn^t)[xy^{2j}],\quad \text{ for all }j\in\Z_n.&
 \end{align*}
\end{itemize}
In particular, $\dim M_{0,s,t}=n$ and $M_{0,s,t} = \bigoplus_{j\in\Z_n}M_{0,s,t}[xy^{2j}] $ 
as $\DD_{m}$-graded module.
We write $\mm{j}=\mm{j,0,s,t}$ when the notation is clear from the context.

\begin{figure}[!h]
\begin{tikzpicture}[scale=.9,every node/.style={scale=0.9}]

\node (M0st) at (0,0) {$M_{0,s,t}$};
\node (D) at (0,-4) {$\DD_m$-degree};

\draw[->] (M0st) to [out=135,in=180]
 (0,1) node [above,sloped] {$x\cdot$} to [out=00,in=45] (M0st);

\draw[->] (M0st) to [out=-135,in=180]
 (0,-1) node [below,sloped] {$y\cdot$} to [out=00,in=-45] (M0st);

\node (0) at (2,0) {$\mm{0}$};
\node (x) at (2,-4) {$x$};
\node (1) at (3.5,0) {$\mm{1}$};
\node (xy2) at (3.5,-4) {$xy^2$};
\node (2) at (5,0) {$\mm{2}$};
\node (xy4) at (5,-4) {$xy^4$};
\node     at (7,0) {$\cdots$};
\node (n2) at (8,0) {$\mm{\frac{n}{2}}$};
\node (xyn) at (8,-4) {$xy^n$};
\node     at (9,0) {$\cdots$};
\node (n-2) at (11,0) {$\mm{n-2}$};
\node (xyn-2) at (11,-4) {$xy^{m-4}$};
\node (n-1) at (12.5,0) {$\mm{n-1}$};
\node (xyn-1) at (12.5,-4) {$xy^{m-2}$};

\draw[->] (0) to [out=135,in=180]
 (2,1) node [above,sloped] {$\sgn(x)^s$} to [out=00,in=45] (0);

\draw[<->] (1) to [out=45,in=135]  node [above,sloped] {$\sgn(x)^s\sgn(y^n)^t$} (n-1);
\draw[<->] (2) to [out=45,in=135] node [above,sloped] {\tiny$\sgn(x)^s\sgn(y^n)^t$}(n-2) ;
\draw[->] (n2) to [out=135,in=180]
 (8,1) node [above,sloped,scale=.75] {\tiny$\sgn(x)^s\sgn(y^n)^t$} to [out=0,in=45] (n2);

\draw[->] (2,-.3) to [out=-45,in=-135]   node [below,sloped] {$\sgn(y^n)^t$} (12.5,-.3);

\draw[->] (n-1) to [out=225,in=-45] (n-2); 

\draw[->,dashed] (n-2) to [out=225,in=0] (10,-.5); 

\draw[->,dashed] (6,-.5) to [out=180,in=-45] (2);

\draw[->] (2) to [out=225,in=-45] (1); 
\draw[->] (1) to [out=-135,in=-45] (0);

%

\draw[dotted] (M0st) -- (D);
\draw[dotted] (0) -- (x);
\draw[dotted] (1) -- (xy2);
\draw[dotted] (2) -- (xy4);
\draw[dotted] (n2) -- (xyn);
\draw[dotted] (n-2) -- (xyn-2);
\draw[dotted] (n-1) -- (xyn-1);

\end{tikzpicture}
\caption{The simple module $M_{0,s,t}$ associated with the conjugacy class of $x$.}
\label{fig:M0st}
\end{figure}

\subsubsection{The modules $M(xy,\varrho)$}\label{subsec:simples xy} 
The conjugacy class of $xy$ is $\{xy^{2j+1}\mid j\in\Z_n\}$. 
Its centralizer is the subgroup $\langle xy\rangle\oplus\langle y^n\rangle\simeq\Z_2\oplus\Z_2$ whose simple representations are 
given by the characters $\sgn^s\ot\sgn^t$, $s,t\in\Z_2$, where $\sgn(xy)=\sgn(y^n)=-1$ 
are the corresponding $\sgn$ representation
of the $\Z_{2}$ summand. 
Thus, the simple $\D\DD_m$-modules are
\smallskip
\begin{itemize}
\item[$\triangleright$] $M_{1,s,t}:=M(xy,\sgn^s\ot\sgn^t)=\ku\{\mm{j,1,s,t}\mid j\in\Z_n\}$ with
 \begin{align*}
 &y\cdot\mm{0,1,s,t}=\sgn^t(y^n) \mm{n-1,1,s,t},&
 \\
 &y\cdot\mm{j,1,s,t}=\mm{j-1,1,s,t},&
 \\
 &x\cdot\mm{j,1,s,t}=\sgn^s(xy)\sgn^t(y^n) \mm{n-j-1,1,s,t},&\quad& \text{for all } j\in\Z_n,&\\
 &\text{and } \mm{j,1,s,t}\in M(xy,\sgn^s\ot\sgn^t)[xy^{2j+1}], &\quad& \text{for all } j\in\Z_n.&
 \end{align*}
 \end{itemize}
 Here, $\dim M_{1,s,t}=n$ and $M_{1,s,t} = \bigoplus_{j\in\Z_n}M_{1,s,t}[xy^{2j+1}] $ 
as $\DD_{m}$-graded module.
Eventually, we might simply write $\mm{j}=\mm{j,1,s,t}$.

\begin{figure}[!h]
\begin{tikzpicture}[scale=.9,every node/.style={scale=0.9}]

\node (M1st) at (0,0) {$M_{1,s,t}$};
\node (D) at (0,-4) {$\DD_m$-degree};

\draw[->] (M1st) to [out=135,in=180]
 (0,1) node [above,sloped] {$x\cdot$} to [out=00,in=45] (M1st);

\draw[->] (M1st) to [out=-135,in=180]
 (0,-1) node [below,sloped] {$y\cdot$} to [out=00,in=-45] (M1st);

\node (0) at (2,0) {$\mm{0}$};
\node (x) at (2,-4) {$xy$};
\node (1) at (3.5,0) {$\mm{1}$};
\node (xy2) at (3.5,-4) {$xy^3$};
\node (2) at (5,0) {$\mm{2}$};
\node (xy4) at (5,-4) {$xy^5$};

\node     at (7.35,0) {$\cdots$};

\node (n-3) at (9.5,0) {$\mm{n-3}$};
\node (xyn-3) at (9.5,-4) {$xy^{m-5}$};

\node (n-2) at (11,0) {$\mm{n-2}$};
\node (xyn-2) at (11,-4) {$xy^{m-3}$};
\node (n-1) at (12.5,0) {$\mm{n-1}$};
\node (xyn-1) at (12.5,-4) {$xy^{m-1}$};

\draw[<->] (0) to [out=45,in=135]  node [above,sloped] {$\sgn(x)^s\sgn(y^n)^t$} (n-1);

\draw[<->] (1) to [out=45,in=135]  node [above,sloped] {\tiny $\sgn(x)^s\sgn(y^n)^t$} (n-2);

\draw[<->] (2) to [out=45,in=135]  node [above,sloped,scale=.75] {\tiny $\sgn(x)^s\sgn(y^n)^t$} (n-3);
 
 \draw[->] (2,-.3) to [out=-45,in=-135]  node [below,sloped] {$\sgn(y^n)^t$} (12.5,-.3);
 
\draw[->] (n-1) to [out=225,in=-45] (n-2); 

\draw[->,dashed] (n-3) to [out=225,in=0] (8.5,-.5); 

\draw[->,dashed] (6,-.5) to [out=180,in=-45] (2); 

\draw[->] (1) to [out=225,in=-45] (0); 
\draw[->] (2) to [out=225,in=-45] (1); 
\draw[->] (n-2) to [out=225,in=-45] (n-3);

%

\draw[dotted] (M1st) -- (D);
\draw[dotted] (0) -- (x);
\draw[dotted] (1) -- (xy2);
\draw[dotted] (2) -- (xy4);
\draw[dotted] (n-3) -- (xyn-3);
\draw[dotted] (n-2) -- (xyn-2);
\draw[dotted] (n-1) -- (xyn-1);

\end{tikzpicture}
\caption{The simple module $M_{1,s,t}$ associated with the conjugacy class of $xy$.}
\label{fig:M1st}
\end{figure}
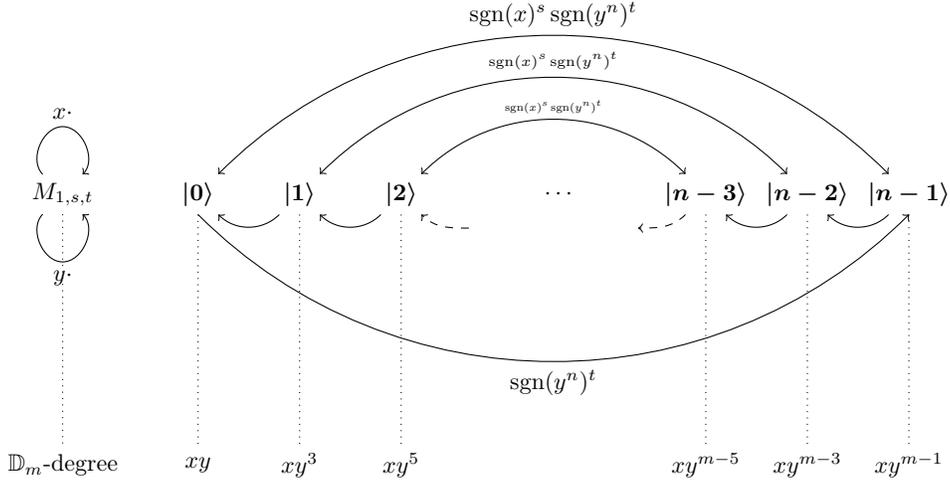

\subsection{Some tensor products of weights}\label{subsec:some tensor products} \

The category of $\D\DD_{m}$-modules is semi\-simple. As such, any tensor product of two weights
can be written as a direct sum of weights. In order to perform our study on simple modules
over doubles of bosonizations of Nichols algebras, which is carried out in 
\S\ref{sec:simple-MI-modules}, by
dealing with Verma modules, 
we need to know the direct summands of the following products of simple $\D\DD_m$-modules. 

In the following two lemmata, we decompose the tensor product of the 
simple modules $M_{r,s,t}$ with $r,s,t\in\Z_2$ as in \S \ref{subsec:simples x}, \S \ref{subsec:simples xy}
with some other families of simple modules.

\begin{lema}\label{le:mik ot mrst}
Let $M_{i,k}$ be a simple $\D\dm$-module  with 
 $1\leq i\leq n$, $0\leq k\leq m-1$ as in Notation \ref{notation: Mik Mell}.
 Then
 $$
 M_{i,k}\ot M_{r,s,t}\simeq M_{r+i,s+1+\delta_{i,n}t,t+k}\oplus M_{r+i,s+\delta_{i,n}t,t+k},
 $$
as $\D\DD_m$-modules; here we write $r+i$, $s+1+\delta_{i,n}t$ and $t+k$ for their classes in $\Z_{2}$. 
Moreover, the simple submodules inside the tensor product are given by  
$N_{\pm} = \Bbbk \{y^{a}\cdot \fn_{\pm} :\ 0\leq a\leq n-1\}$, where $\fn_{\pm}=\omega^{rk}\mm{-}\ot\mm{0}\pm\mm{+}\ot\mm{i}$, with $\mm{n}=\mm{0}$ if $i=n$, and  
\begin{align*}
N_+\simeq M_{r+i,s+1+\delta_{i,n}t,t+k}\,\mbox{ and }\, N_-\simeq M_{r+i,s+\delta_{i,n}t,t+k}.
\end{align*}
\end{lema}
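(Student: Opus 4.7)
My plan is to construct $N_{+}$ and $N_{-}$ explicitly as $\D\dm$-submodules of $M_{i,k}\otimes M_{r,s,t}$, each of dimension $n$, and identify them with the asserted simple modules; the direct sum decomposition then follows by dimension count since $\dim(M_{i,k}\otimes M_{r,s,t})=2n$.

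First I check that $\fn_{+}$ and $\fn_{-}$ are nonzero and lie in the homogeneous component of $\dm$-degree $xy^{r+i}$: using the relation $y^{-i}x=xy^{i}$ one has $\deg(\mm{-}\otimes\mm{0})=y^{-i}\cdot xy^{r}=xy^{r+i}$ and $\deg(\mm{+}\otimes\mm{i})=y^{i}\cdot xy^{2i+r}=xy^{r+i}$, while the two tensor monomials involved are linearly independent (also when $i=n$, since $\mm{+}$ and $\mm{-}$ remain so in $M_{n,k}$). By Yetter--Drinfeld compatibility applied to the group-like element $y$, the vector $y^{a}\cdot\fn_{\pm}$ sits in degree $y^{-a}\cdot xy^{r+i}\cdot y^{a}=xy^{r+i+2a}$, so the $n$ vectors $y^{a}\cdot\fn_{\pm}$, $0\le a\le n-1$, lie in pairwise distinct homogeneous components and are in particular linearly independent. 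Hence $N_{\pm}:=\Bbbk\{y^{a}\cdot\fn_{\pm}\}$ has dimension $n$; it is automatically stable under $y$ (with $y^{n}\cdot\fn_{\pm}=(-1)^{k+t}\fn_{\pm}$, using $\omega^{n}=-1$ for the action on $M_{i,k}$, and the fact that $y^{n}$ acts on $M_{r,s,t}$ as the scalar $(-1)^{t}$), and stable under $\ku^{\dm}$.

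The main step is to show stability under $x$. Expanding with $\Delta(x)=x\otimes x$ and $x\cdot\mm{\mp}=\mm{\pm}$ gives
\begin{align*}
x\cdot\fn_{\pm}=\omega^{rk}\,\mm{+}\otimes(x\cdot\mm{0})\pm\mm{-}\otimes(x\cdot\mm{i}),
\end{align*}
while a parallel expansion using $y\cdot\mm{\pm}=\omega^{\pm k}\mm{\pm}$ and the explicit $y$-action on $M_{r,s,t}$ yields a closed form for $y^{i}\cdot\fn_{\pm}$ whose coefficients $\omega^{\pm ik}$ collapse to $-1$ thanks to the hypothesis $\omega^{ik}=-1$. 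Comparing the two expressions, one finds an identity of the shape $x\cdot\fn_{\pm}=\epsilon_{\pm}\,y^{i}\cdot\fn_{\pm}$, where $\epsilon_{+}$ and $\epsilon_{-}$ are explicit signs that differ from each other by a factor $-1$; this proves that $N_{\pm}$ is indeed a $\D\dm$-submodule.

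To identify $N_{\pm}$, I read off the three parameters from the structure just computed: the parity of the degrees $xy^{r+i+2a}$ gives $r+i\pmod{2}$, the $y^{n}$-eigenvalue of $\fn_{\pm}$ gives $t+k$, and the element $\fn_{\pm}$ corresponds, via its degree $xy^{r+i}$, to the unique basis vector $\mm{j_{0},r+i,s',t+k}$ of $M_{r+i,s',t+k}$ with matching degree. Comparing $\epsilon_{\pm}$ with the $x$-action formulas from \S\ref{subsec:simples x}--\S\ref{subsec:simples xy} evaluated on $\mm{j_{0}}$ then pins down $s'=s+1+\delta_{i,n}t$ for $+$ and $s'=s+\delta_{i,n}t$ for $-$; the correction $\delta_{i,n}t$ records exactly that when $i=n$ the factor $\mm{i}$ in $\fn_{\pm}$ coincides with $\mm{0}$, whose $x$-action carries an extra $(-1)^{t}$ relative to the generic formula $x\cdot\mm{j}=(-1)^{s+t}\mm{n-j}$ valid for $j\ne 0$. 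The main obstacle is the sign bookkeeping in this $x$-action computation, which must be split into subcases according to $r\in\{0,1\}$, the parity of $i$ and whether $i=n$, to verify that $\omega^{ik}=-1$ is precisely the ingredient that produces the cancellation $x\cdot\fn_{\pm}=\epsilon_{\pm}\,y^{i}\cdot\fn_{\pm}$ in every case.
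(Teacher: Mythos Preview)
Your approach is essentially the same as the paper's: compute the $\dm$-degree of $\fn_{\pm}$, verify that $N_{\pm}$ is stable under $x$ via an identity of the form $x\cdot\fn_{\pm}=(\text{sign})\cdot y^{?}\cdot\fn_{\pm}$, and then identify the parameters $(r',s',t')$ by comparing eigenvalues. Two small corrections: the Yetter--Drinfeld compatibility gives $\deg(y^{a}\cdot\fn_{\pm})=y^{a}(xy^{r+i})y^{-a}=xy^{r+i-2a}$, not $xy^{r+i+2a}$ (harmless, since distinctness is all you need); and for $r=1$ the exact identity is $x\cdot\fn_{\pm}=\mp\sgn^{s}(xy)\,y^{i+1}\cdot\fn_{\pm}$ rather than $y^{i}\cdot\fn_{\pm}$, so your ``shape'' statement should allow the exponent to depend on $r$. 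The paper carries out the identification by moving to the basis vector of degree $x$ or $xy$ (acting by a suitable power of $y$) and reading off the $x$- or $xy$-eigenvalue there, whereas you propose to compare directly at $\fn_{\pm}$; both work, but the paper's choice avoids having to track separately which basis vector $\mm{j_{0}}$ you land on.
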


\begin{proof}
We prove first the case $i<n$ and $r=0$, \textit{i.e.} $M_{0,s,t}$ is as in \S  \ref{subsec:simples x}.
Let us show that the subspaces $N_{\pm}$ are simple $\D\dm$-modules.
The elements $\fn_{\pm}$ are eigenvectors of $y^{n}$ with eingenvalue $\sgn^{t+k}(y^n)$ because
\begin{align*}
y^n\cdot(\mm{\pm}\ot\mm{0})&=(-1)^k\sgn^t(y^n)\,\mm{\pm}\ot\mm{0}
\end{align*}
and $\omega^{n}=\omega^{-n}=-1$. Also, it is straightforward to check that
\begin{align}\label{eq:x yi}
x\cdot \fn_{\pm}= \mp \sgn^{s}(x) y^{i}\cdot \fn_{\pm}. 
\end{align}
This implies that $N_{\pm} = \Bbbk \{y^{a}\cdot \fn_{\pm} :\ 0\leq a\leq n-1\}$ are $\DD_m$-submodules of $M_{i,k}\ot M_{r,s,t}$. 
Moreover, the elements $\fn_{\pm}$ are homogeneous of the same degree for
\begin{align*}
\deg(\mm{-}\ot\mm{0})&=\deg\mm{-}\deg\mm{0}=y^{-i}x=xy^{i}\mbox{ and}\\ 
\deg(\mm{+}\ot\mm{i})&=\deg\mm{+}\deg\mm{i}=y^{i}xy^{2i}=xy^{i}.
\end{align*}
Hence $\deg y^{a}\cdot \fn_{\pm} = y^{a}xy^{i}y^{-a} = xy^{i-2a}$ for 
all $0\leq a\leq n-1$. This implies that in fact  $N_{\pm}$ are $\D\dm$-modules with $\dim N_{\pm} = n$. 
Moreover, a direct check shows that they are isomorphic 
to the simple $\D\DD_m$-modules displayed in \S\ref{subsec:simples x} and \S\ref{subsec:simples xy}, 
depending on the parity of $i$. One way to distinguish these modules is by looking at the 
eigenvalues of the action of $y^n$ and $x$ 
on the homogeneous component of $\DD_m$-degree $x$ if $i=2z$ is even, or 
the action of $y^n$ and $xy$
on the homogeneous component of $\DD_m$-degree $xy$ if
$i=2z+1$ is odd. 
In the case of $N_{\pm}$, these homogeneous components are spanned by $y^z\cdot\fn_{\pm}$, respectively.

If $i=2z$ is even, then  
$x \cdot \big(y^{z} \cdot\fn_{\pm}\big) = 
\mp \sgn^{s}(x) \big(y^{z}\cdot \fn_{\pm}\big)$ by  \eqref{eq:x yi}. Hence $M_{0,s,t+k}\simeq N_-$ and $M_{0,s+1,t+k}\simeq N_+$. If $i=2z+1$ is odd, then 
$(xy) \cdot \big(y^{z} \cdot \fn_{\pm}\big) = 
\mp \sgn^{s}(x) \big(y^{z}\cdot \fn_{\pm}\big)$ by \eqref{eq:x yi}. Hence $M_{1,s,t+k}\simeq N_-$ and $M_{1,s+1,t+k}\simeq N_+$. 
In both cases the submodules are simple and non-isomorphic. Therefore $N_{+}\cap N_{-} = \{0\}$ and consequently, 
$M_{i,k}\ot M_{0,s,t}= N_{-}\oplus N_{+} \simeq M_{i,s,t+k}\oplus M_{i,s+1,t+k}$.

The strategy to prove the case $i<n$ and $r=1$ is similar. We still have that $y^n\cdot\fn_{\pm}= \sgn^{t+k}(y^n)\,\fn_{\pm}$ and, 
instead of \eqref{eq:x yi}, we have that $x\cdot\fn_{\pm} = \mp \sgn^{s}(xy) \big(y^{i+1}\cdot\fn_{\pm}\big)$. In this case, both $\fn_{\pm}$ are homogeneous of
degree $xy^{i+1}$. For $i$ even, we have $N_{-}\simeq  M_{1,s,t+k}$ and $N_{+}\simeq  M_{1,s+1,t+k}$, meanwhile for $i$ odd, we have
$N_{-}\simeq  M_{0,s,t+k}$ and $N_{+}\simeq  M_{0,s+1,t+k}$. We leave the details for the reader.

The proof for $i=n$ follows {\it mutatis mutandis} from the paragraphs above.
\end{proof}

We end this subsection with the following lemma.

\begin{lema}\label{le:chi2 ot x}
Let $M(e,\chi_2)$ be a simple $\D\dm$-modules as in 
 \S \ref{subsec:the mechis}. 
Then 
$$M(e,\chi_2)\ot M_{r,s,t}\simeq M_{r,s+1,t},$$
as $\D\DD_m$-modules, where we write $s+1$ for its class in $\Z_{2}$.
\end{lema}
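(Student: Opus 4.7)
The plan is to exhibit an explicit isomorphism by comparing bases and actions. Since $M(e,\chi_2)=\ku\{\mm{u_2}\}$ is one-dimensional and concentrated in degree $e$, the tensor product has basis $\{\mm{u_2}\ot\mm{j,r,s,t}\mid j\in\Z_n\}$, and $\deg(\mm{u_2}\ot\mm{j,r,s,t}) = e\cdot \deg\mm{j,r,s,t}= \deg\mm{j,r,s,t}$. This gives a natural candidate for the isomorphism, namely $\phi\colon \mm{u_2}\ot\mm{j,r,s,t}\mapsto\mm{j,r,s+1,t}$, which is obviously a bijection preserving the $\DD_m$-grading and hence $\ku^{\DD_m}$-equivariant.

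It remains to verify that $\phi$ is $\ku\DD_m$-equivariant. Since $y\cdot\mm{u_2}=\mm{u_2}$, the action of $y$ on $\mm{u_2}\ot\mm{j,r,s,t}$ coincides with the action of $y$ on $\mm{j,r,s,t}$; on the other side, inspecting the formulas in \S\ref{subsec:simples x} and \S\ref{subsec:simples xy}, the $y$-action in $M_{r,s,t}$ and in $M_{r,s+1,t}$ is identical (it depends on $t$ through $\sgn^t(y^n)$ but not on $s$). Since $x\cdot\mm{u_2}=-\mm{u_2}=\sgn(x)\mm{u_2}$, the action of $x$ on $\mm{u_2}\ot\mm{j,r,s,t}$ equals $-1$ times the action of $x$ on $\mm{j,r,s,t}$; this precisely replaces each occurrence of $\sgn^s(x)$ (in $M_{0,s,t}$) or $\sgn^s(xy)$ (in $M_{1,s,t}$) by the corresponding sign with exponent $s+1$, matching the action in $M_{r,s+1,t}$.

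This establishes $\phi$ as a $\D\dm$-module isomorphism for both parities of $r$. No step is genuinely an obstacle here; the computation is a direct check of the structural formulas, and the only thing to be careful about is verifying that the $y$-action formulas are insensitive to $s$ while the $x$-action formulas absorb the sign $\sgn(x)$ cleanly in both cases $r=0$ and $r=1$.
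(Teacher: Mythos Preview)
Your proof is correct and follows essentially the same approach as the paper: the paper also notes that $M(e,\chi_2)$ is concentrated in degree $e$ (so the grading is preserved) and that $x\cdot\mm{u_2}=-\mm{u_2}$, $y\cdot\mm{u_2}=\mm{u_2}$, from which the result follows directly from the tensor action. You have simply written out the explicit isomorphism and checked the $x$- and $y$-actions in slightly more detail than the paper's ``straightforward'' sketch.
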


\begin{proof} Straightforward. For instance, since $M(e,\chi_2) = M(e,\chi_2)[e]$ we have that  
$\bigg(M(e,\chi_2)\ot M_{r,s,t}\bigg)[xy^{2j+r}]= M(e,\chi_2)\ot \bigg(M_{r,s,t}[xy^{2j+r}]\bigg)$ for all $0\leq j\leq n-1$.
Also, as the action on $M(e,\chi_2)$ is given by $x\cdot\mm{u_2}=-\mm{u_2}$ and $y\cdot\mm{u_2}=\mm{u_2}$,
the lemma follows easily by the definition of the action on the tensor product.
\end{proof}

\subsection{Finite-dimensional Nichols algebras over Dihedral groups}\label{subsec:finite dim nichols}\ 

Here we recall the classification of finite-dimensional Nichols algebras in ${}_{\ku\DD_m}^{\ku\DD_m}\mathcal{YD}$, 
or equivalently in ${}_{\D\DD_m}\cM$. Roughly speaking,
they are all given by exterior algebras of direct sums of some families of simple $\D\dm$-modules $M_{i,k}$, recall Notation \ref{notation: Mik Mell}.

The classification in \cite[Theorem A]{FG} is given in terms of direct sums of three families of simple modules.
To shorten notation, we present them below in just one family by changing slightly the description.

\begin{notation}\label{notation:I}
Let $\cI$ be the family of all finite multisets $\{(i_1,k_1), ..., (i_r,k_r)\}$ of pairs such that 
$1\leq i_s\leq n$, $0\leq k_s\leq m-1$ and $\omega^{i_sk_t}=-1$ for all $1\leq s,t\leq r$. For $I\in\cI$, we define
$$
M_{I}= \bigoplus_{(i,k)\in I}M_{i,k}.
$$
\end{notation}

Observe that the families $\cI$, $\mathcal{L}$ and $\mathcal{K}$ defined in \cite{FG} fit in the description above. 
Indeed, if there is a pair $(n,\ell)$ in a sequence $I\in\cI$ and $(i,k)\in I$, then $\ell$ and $k$ must be odd 
because $\omega^{n\ell}=(-1)^{\ell}=-1=\omega^{nk}=(-1)^{k}$. 

\begin{theorem}\cite{FG}\label{thm:all-nichols-dm}
Let $\BV(M)$ be a finite-dimensional Nichols algebra
in ${}_{\D\DD_m}\mathcal{M}$. Then $M\simeq M_I$ for some $I\in\cI$ and $\BV(M)\simeq \bigwedge M$.\qed
\end{theorem}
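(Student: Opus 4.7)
The plan is to realize this as essentially a reformulation of \cite[Theorem A]{FG}, where the same classification is stated in terms of three distinct families of simple Yetter-Drinfeld modules over $\dm$. The main task is therefore to check that the unified index set $\cI$ of Notation \ref{notation:I} is in bijection with the disjoint union of those three families, and then to verify that the condition $\omega^{i_sk_t}=-1$ forces the Nichols algebra to be an exterior algebra.

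First, I would go through the three families of \cite{FG} one by one and identify each of them with pairs $(i,k)$ in our notation: two of them correspond to pairs $(i,k)$ with $1\leq i\leq n-1$ (distinguished by the parity of $k$), and the third corresponds to pairs $(n,k)$ with $k$ odd. In each case the diagonal constraint $\omega^{ik}=-1$ is automatic from the hypotheses of \cite{FG}, and the cross-compatibilities imposed there for combining components reduce precisely to $\omega^{i_sk_t}=-1$ for all $s,t$. Thus the $M_I$ with $I\in\cI$ exhaust, up to isomorphism, the Yetter-Drinfeld modules listed in \cite{FG}.

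Next, I would derive the exterior algebra statement. Using the explicit $\dm$-grading and the $\ku\dm$-action on each $M_{i,k}$ recalled in \S\ref{subsec: the mikes} and \S\ref{subsec: the mels}, a direct computation of the Yetter-Drinfeld braiding shows that $c_{M_{i,k},M_{i',k'}}\circ c_{M_{i',k'},M_{i,k}}$ acts by the scalar $\omega^{ik'}\omega^{i'k}$ on tensors of homogeneous generators. Under the hypothesis $\omega^{i_sk_t}=-1$ this scalar equals $1$ on cross terms, while on each diagonal summand the braiding itself becomes $-\mathrm{flip}$. Hence $c_{M_I,M_I}=-\mathrm{flip}$, and since the Nichols algebra of a space with such a braiding is the exterior algebra, we obtain $\BV(M_I)\simeq\bigwedge M_I$ of dimension $2^{\dim M_I}$.

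The harder direction, namely ruling out every other simple $\D\dm$-submodule (those supported on $\cO_e$, $\cO_x$, $\cO_{xy}$, or on $y^i$ with $\omega^{ik}\neq -1$), is the main content of \cite{FG} and relies on the analysis of the Weyl groupoid of the underlying rack together with the explicit computation of quadratic obstructions. This is the step I would expect to be the main obstacle were one to redo the proof from scratch; since it is fully established in \cite{FG}, here it suffices to quote the result.
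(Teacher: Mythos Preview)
Your proposal is correct and matches the paper's approach: the paper gives no proof at all, simply citing \cite{FG} with a \qed, after observing in the paragraph preceding the theorem that the three families $\cI$, $\mathcal{L}$, $\mathcal{K}$ of \cite{FG} fit into the unified index set $\cI$ of Notation~\ref{notation:I}. Your sketch of the bijection and of the braiding computation (showing $c_{M_I,M_I}=-\mathrm{flip}$, hence $\BV(M_I)\simeq\bigwedge M_I$) is a reasonable elaboration of what the paper leaves implicit, and your deferral of the hard direction to \cite{FG} is exactly what the paper does.
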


\begin{obs}\label{obs:V=WU then W=Z}
We stress that if $V=M_I=W\oplus U$ is decomposable, then it satisfies 
Remarks \ref{obs:Z igual W} and \ref{obs:Z igual W -- homogeneous}. In particular,
$\BV(V)\#\ku\dm\simeq\BV(W)\#(\BV(U)\#\ku\dm)$ and $Z=W=\fL_H^U(W)$.
\end{obs}

\subsection{Drinfeld doubles of bosonizations of Nichols algebras over \texorpdfstring{$\dm$}{Dm}}
\label{subsec:drinfeld-double-dm}\

Here we present by generators and relations the Drinfeld double of the 
bosonization of a finite-dimensional Nichols algebra over $\dm$ by specifying the recipe given in 
\S\ref{subsub:generalized qg}. For that purpose, we need to set up some notation.

Let $V$ be a $\D\DD_m$-module with
$\dim \BV(V)<\infty$. We write
\begin{align*}
\D(V)=\D(\BV(V)\#\ku\DD_m) 
\end{align*}
and $\oV$ for the dual object of $V$ as in \S \ref{subsec:Drinfeld double}. 
By Theorem \ref{thm:all-nichols-dm}, we can fix a decomposition $V=\bigoplus_{(i,k)\in I}M_{i,k}$ and the orthogonal decomposition 
$\oV=\bigoplus_{(i,k)\in I}\overline{M_{i,k}}$. 
Given a (two-dimensional) 
direct summand $M_{i,k}$, we write $v_{+}$ and $v_{-}$ the elements of the basis 
$\{\mm{+},\mm{-}\}$ given in \S\ref{subsec: the mels} or \S\ref{subsec: the mikes}, as appropriate. 
So, we have
\begin{align*}
x\cdot v_{\pm}&=v_{\mp},\quad 
y\cdot v_{\pm}=\omega^{\pm k} v_{\pm}
\quad\mbox{for}\quad v_{\pm}\in M_{i,k}[y^{\pm i}].
\end{align*}
Also, we denote by $\alpha_+$ and $\alpha_-$ the elements in 
$\overline{M_{i,k}}$ satisfying
$\langle\alpha_{\bullet},v_{\circ}\rangle=\delta_{\bullet,\circ}$ where $\bullet,\circ\in\{+,-\}$. 
That is, $\{\alpha_\pm\}$ and $\{v_\pm\}$ are dual bases. 
Then the action of $\D\DD_m$ on these elements is determined by
\begin{align*}
x\cdot\alpha_{\pm}&=\alpha_{\mp},\quad 
y\cdot\alpha_{\pm}=\omega^{\mp k}\alpha_{\pm}
\quad\mbox{ for }\quad 
\alpha_{\pm}\in \overline{M_{i,k}}[y^{\mp i}].
\end{align*}
Thus, $\overline{M_{i,k}}\simeq M_{i,k}$ as $\D\DD_m$-modules 
via the assignment $\alpha_{\pm}\mapsto v_{\mp}$. 
The $\D\DD_m$-coactions defined by the functors $F_R$ and $F_{R^{-1}}$ on $M_{i,k}$ 
and $\overline{M_{i,k}}$, recall \eqref{eq:coactionsD(H)}, are
\begin{align}\label{eq:coactions}
(v_{\pm})\_{-1}\ot (v_{\pm})\_{0}&=y^{\pm i}\ot v_{\pm}\quad\mbox{and}\\
\notag
(\alpha_{\pm})\_{-1}\ot (\alpha_{\pm})\_{0}&=
\sum_{s=0}^{m-1}\omega^{\mp sk}\delta_{y^{-s}}
\,\ot\,\alpha_{\pm}+\omega^{\mp sk}\delta_{y^{-s}x}
\,\ot\,\alpha_{\mp}.
\end{align}

\begin{prop}
As an 
algebra, $\D(V)$ is generated by the elements of $V$, $\oV$, $\DD_m$ and $\ku^{\DD_m}$ subject to the relations \eqref{eq:relations of DDDm}--\eqref{eq:relations exterior} below.
\begin{itemize}
 \item For $s,t\in\dm$ and $z\in V\cup\oV$,
 \begin{align}
\label{eq:relations of DDDm}\delta_{tst^{-1}}\,t&=t\,\delta_{s}\\
\label{eq:relations bosonization}
  t\,z=(t\cdot z)\,t,
  &\quad\delta_t\,z=\sum_{s\in\DD_m}(\delta_s\cdot z)\,\delta_{s^{-1}t}.
\end{align}
\item For $v_{\pm}\in M_{i,k}$ and $\alpha_{\pm}\in\overline{M_{i,k}}$,
\begin{align}\label{eq:a+v+}
\alpha_+v_+&=-v_+\alpha_++\Phi_{++}\quad\mbox{with}\quad\Phi_{++}=1-\sum_{s=0}^{m-1}\omega^{sk}\delta_{y^{s}}y^{i},\\
\label{eq:a+v-}
\alpha_+v_-&=-v_-\alpha_++\Phi_{+-}
\quad\mbox{with}\quad\Phi_{+-}=
-\sum_{s=0}^{m-1}\omega^{-sk}\delta_{xy^{s}}y^{-i},
\\
\label{eq:a-v+}
\alpha_-v_+&=-v_+\alpha_-+\Phi_{-+}
\quad\mbox{with}\quad\Phi_{-+}
=-\sum_{s=0}^{m-1}\omega^{sk}\delta_{xy^{s}}y^{i},
\\
\label{eq:a-v-}
\alpha_-v_-&=
-v_-\alpha_-+\Phi_{--}
\quad\mbox{with}\quad\Phi_{--}
=1-\sum_{s=0}^{m-1}\omega^{-sk}\delta_{y^{s}}y^{-i}.
\end{align}
\item If $z,w\in V$, $z,w\in\oV$ or $w\in V$ and $z\in \oV$ are in orthogonal direct summands,
\begin{align}\label{eq:relations exterior}
z\,w&=-w\,z.
\end{align}
\end{itemize}
\end{prop}

\begin{proof}
We briefly explain why these relations hold. 
Relation \eqref{eq:relations of DDDm} is the commutation rule in $\D\dm$. 
The commutation rules \eqref{eq:relations bosonization} are given by the bosonizations 
$\BV(V)\#\D\dm$ and $\BV(\oV)\#\D\dm$.  The relations \eqref{eq:a+v+}--\eqref{eq:a-v-} and \eqref{eq:relations exterior} 
for generators in orthogonal direct summands follow from \eqref{eq:coactionsD(H)} and \eqref{eq:commutation rules gral} 
by using \eqref{eq:coactions}. By \eqref{eq:nichols de oV}, $\BV(\oV)$ 
is isomorphic as an algebra to a finite-dimensional Nichols algebra over $\dm$. 
Then it as an exterior algebra like $\BV(V)$ and hence \eqref{eq:relations exterior} holds. 
\end{proof}

Later on, in the upcoming section, we describe the simple $\D(V)$-modules using the strategy 
developed in \S\ref{subsec:simples} and \S\ref{subsec:decomposable}. 
Among all the relations above, we use only those involving $v_{\pm}$ and $\alpha_{\pm}$. 
Besides, the following elements of $\D(V)$ are going to be useful:
For a fixed a summand $M_{i,k}$, we set $\vt=v_+v_-$, $\at=\alpha_+\alpha_-$ and define 
\begin{align}\label{eq:Phi}
\Theta=-\Phi_{++}\Phi_{--}+\Phi_{+-}\Phi_{-+}\in\D\DD_m. 
\end{align}
Using \eqref{eq:a+v+}-\eqref{eq:a-v-}, a straightforward computation shows
that these elements satisfy \eqref{eq:discriminante} or its recursive version, as appropriate. Explicitly,
\begin{align}\label{eq:discriminante mik}
\at\vt-\Theta\in\oplus_{n>0}\BV^n(W)\ot\D(U)\ot\BV^n\left(\overline{W}\right)
\end{align}
where $W=M_{i,k}$ and $V=W\oplus U$, and $\D(U)=\D\DD_m$ if $V=W$.

\subsubsection{Spherical}

We finish this section by characterizing those Drinfeld doubles $\D(V)$ which are {\it spherical Hopf algebras}. 
This means by \cite[Definition 3.1]{BaW-adv} that $\D(V)$ has a group-like element $\varpi$ such that
\begin{align*}
\cS^2(h)=\varpi h\varpi^{-1}\quad\mbox{and}\quad\operatorname{tr}_{\fN}(\vartheta\varpi)=\operatorname{tr}_{\fN}(\vartheta\varpi^{-1})
\end{align*}
for all $h\in\D(V)$, $\fN\in{}_{\D(V}\cM$ and $\vartheta\in\operatorname{End}_{\D(V)}(\fN)$.
A group-like element satisfying the first condition is called {\it pivot} and, if it fulfills both conditions, it is called {\it spherical}. 
An involutive pivot, {\it i.e.} $\varpi^2=1$, is clearly an spherical element. The pivot is unique up to multiplication by a central group-like element.

\begin{theorem}\label{thm:DrinfeldDoubleSpherical}
The Drinfeld double $\D(V)$ is spherical if and only if $V$ does not contain a 
direct summand isomorphic to $M_{i,k}$ with both $i$ and $k$ even. 
In such a case, we may choose $\varpi=y^n\chi_3$ as the involutive spherical element.
\end{theorem}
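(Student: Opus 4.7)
The strategy is to produce the involutive pivot explicitly and to verify the condition $\cS^2(h)=\varpi h\varpi^{-1}$ on a generating set, exploiting that $V\oplus\oV$ consists of skew-primitive elements so every group-like of $\D(V,\dm)$ already lies in $\D\dm$, i.e.\ has the form $g\chi$ with $g\in\dm$ and $\chi$ a character of $\dm$.

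For the sufficient direction, I first check that $\varpi=y^n\chi_3$ is involutive. The element $y^n$ is central in $\dm$ with $y^{2n}=1$; the character $\chi_3$ is a class function, hence central in $\D\dm$ by \eqref{eq:relations of DDDm}, and satisfies $\chi_3^2=\chi_1$. Therefore $\varpi^2=y^{2n}\chi_3^2=1$, so the trace condition in the definition of spherical element will be automatic and it suffices to verify the pivot condition. On $\D\dm$ this is trivial since $\cS^2=\id$ there and $\varpi$ is central. For $v_\pm\in M_{i,k}$, the $\ku\dm$-coaction from \eqref{eq:coactions} yields $\cS(v_\pm)=-y^{\mp i}v_\pm$, hence
\begin{align*}
\cS^2(v_\pm)=y^{\mp i}v_\pm y^{\pm i}=\omega^{-ik}v_\pm=-v_\pm,
\end{align*}
using $\omega^{ik}=-1$; while the commutation rules \eqref{eq:relations of DDDm}--\eqref{eq:relations bosonization} give $\chi_3 v_\pm\chi_3^{-1}=\chi_3(y^{\pm i})v_\pm=(-1)^iv_\pm$ and $y^n v_\pm y^{-n}=\omega^{\pm nk}v_\pm=(-1)^kv_\pm$, so $\varpi v_\pm\varpi^{-1}=(-1)^{i+k}v_\pm$. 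These two scalars agree exactly when $i+k$ is odd. Since $n=m/2$ is even, the Nichols constraint $\omega^{ik}=-1$ forbids $i,k$ both odd, so $i+k$ odd is equivalent to not both $i,k$ even. An analogous computation on $\alpha_\pm\in\overline{M_{i,k}}$, using the $\ku^{\dm}$-coaction of \eqref{eq:coactions}, leads to the same parity constraint.

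For the necessary direction I assume some $M_{i,k}\subseteq V$ has $i,k$ both even and show that no group-like $\varpi'=g\chi$ of $\D(V,\dm)$ can be a spherical element. Forcing $\varpi' v_\pm\varpi'^{-1}\in\Bbbk v_\pm$ requires that $g$ normalise the $\ku^{\dm}$-degree $y^{\pm i}$, restricting $g\in\langle y\rangle$; writing $g=y^a$, the pivot equation on $v_\pm$ reduces to $\chi(y)^{\pm i}\omega^{\pm ak}=-1$. Since $\chi(y)=\pm 1$ and $i$ is even, this collapses to $\omega^{ak}=-1$, and a parity check shows that any pivot $\varpi'$ satisfying it must have $\varpi'^{\,2}\neq 1$. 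The main obstacle is then to deduce that none of these non-involutive pivots can satisfy the second defining equality of a spherical element. I plan to construct a $\D(V,\dm)$-module via the recursive framework of \S\ref{subsec:decomposable} on which $\varpi'^{\,2}$ acts by a non-trivial scalar on one weight component while fixing a dual one, producing an explicit $\vartheta\in\End_{\D(V,\dm)}(\fN)$ with $\operatorname{tr}_{\fN}(\vartheta\varpi')\neq\operatorname{tr}_{\fN}(\vartheta\varpi'^{-1})$ for every admissible $\varpi'$.
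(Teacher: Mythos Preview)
Your sufficient direction matches the paper's: you exhibit $\varpi=y^n\chi_3$, check it is involutive, and verify $\cS^2(v_\pm)=-v_\pm=(-1)^{i+k}v_\pm=\varpi v_\pm\varpi^{-1}$ precisely when $i+k$ is odd.

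The necessary direction, however, has a real gap. You restrict a candidate pivot $\varpi'=g\chi$ to $g\in\langle y\rangle$ by imposing $\varpi' v_\pm\varpi'^{-1}\in\Bbbk v_\pm$, but you never impose the pivot condition on the generators of $\D\dm$ itself. Since $\cS^2=\id$ on $\D\dm$, the element $\varpi'$ must be \emph{central} in $\D\dm$; in particular $y^a x y^{-a}=xy^{-2a}=x$ forces $a\in\{0,n\}$. Hence every candidate pivot lies in $\{e,y^n\}\times\{\chi_1,\chi_2,\chi_3,\chi_4\}$, a set of involutions. This is exactly how the paper argues: once all candidates are involutive, spherical is equivalent to pivotal, and one checks directly that when $i,k$ are both even none of these eight elements conjugates $v_\pm$ to $-v_\pm$ (since $(-1)^k=(-1)^i=1$, every candidate fixes $v_\pm$).

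Because you missed the centrality constraint, your set of ``pivots'' $y^a\chi$ with $\omega^{ak}=-1$ is too large --- those elements with $a\notin\{0,n\}$ are not pivots at all, as they fail $\cS^2(x)=x$ --- and the module-theoretic plan you sketch to defeat sphericity for each of them is both unnecessary and not carried out. The fix is a single line: enforce $\varpi' x\varpi'^{-1}=x$, conclude $g\in\{e,y^n\}$, and finish by the parity check already in your argument.
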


\begin{proof}
By  \cite[Proposition 9]{Rad1}, we know that the group of group-like elements of $\D(V)$ equals
$\DD_m\times\{\chi_1,\chi_2,\chi_3,\chi_4\}$. 
Since $\cS^2$ is the identity on $\D\dm$, a pivot element has to belong to the subset $\{y^n\}\times\{\chi_1,\chi_2,\chi_3,\chi_4\}$, which 
consist only of
involutive elements. Then, in order to prove the statement, it is enough to analyse the existence of the pivot for $V$ simple.

Assume $V=M_{i,k}$ for some $1\leq i\leq n$ and $0\leq k< m$. 
Then
\begin{align*}
\chi_3\, v_{\pm}\, \chi_3 & =(-1)^{\pm i} v_{\pm},\quad &  \chi_3\, \alpha_{\pm}\, \chi_3 & =(-1)^{\mp i} \alpha_{\pm},\quad & \cS^2(v_\pm) & =-v_\pm,\\
y^n v_{\pm}y^n & =(-1)^{\pm k} v_{\pm},\quad & y^n\alpha_{\pm}y^n & =(-1)^{\mp k} \alpha_{\pm},\quad & \cS^2(\alpha_\pm) & =-\alpha_\pm,
\end{align*}
for the generators $v_{\pm}\in M_{i,k}$ and $\alpha_{\pm}\in\overline{M_{i,k}}$. 
Indeed, the formulas for the conjugation by $\chi_3$ and $y^n$ follow from \eqref{eq:relations bosonization}. 
The formulas for $\cS^2$ are deduced using \eqref{eq:coactions} and the definition of the coaction in a bosonization. 
Similarly, one can see that $\chi_{1}$ is central, $\chi_2$ commutes with $v_{\pm}$, and $\alpha_{\pm}$ and $\chi_3\chi_2=\chi_4$. 
We deduce then that $y^n\chi_3$ is a pivot if $i+k$ is odd and 
that there is no pivot when $i$ and $k$ are even. The case $i$ and $k$ both odd cannot occur because by assumption $\omega^{ik}=-1$.
\end{proof}

\begin{obs}
The quantum dimension of any simple module in ${}_{\D(V,\dm)}\cM$ is zero, except for those simple modules that are rigid.
\end{obs}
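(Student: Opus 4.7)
The plan is to invoke Theorem~\ref{thm:DrinfeldDoubleSpherical} to use $\varpi=y^n\chi_3$ as the involutive spherical pivot, so that for every $\fN\in{}_{\D(V,\dm)}\cM$ one has $\operatorname{qdim}(\fN)=\operatorname{tr}_{\fN}(\varpi)$. First I would compute the action of $\varpi$ on $V$ and $\oV$: for $v_\pm\in M_{i,k}\subset V$, the $\dm$-grading yields $\chi_3\cdot v_\pm=\chi_3(y^{\pm i})v_\pm=(-1)^iv_\pm$ and the $y$-action yields $y^n\cdot v_\pm=\omega^{\pm nk}v_\pm=(-1)^kv_\pm$, whence $\varpi\cdot v_\pm=(-1)^{i+k}v_\pm$. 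The constraint $\omega^{ik}=-1$ together with $n=m/2$ even forces $ik$ to be even, and the spherical hypothesis of Theorem~\ref{thm:DrinfeldDoubleSpherical} rules out both $i,k$ even, so $i+k$ is odd and $\varpi|_V=-\operatorname{id}$. An analogous computation on $\oV$ yields $\varpi|_{\oV}=-\operatorname{id}$.

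Since $\varpi$ is group-like, it acts multiplicatively on products of generators, so $\varpi|_{\BV^k(V)}=(-1)^k\operatorname{id}$. Therefore
\begin{align*}
\operatorname{tr}_{\BV(V)}(\varpi)=\sum_{k=0}^{\dim V}(-1)^k\binom{\dim V}{k}=0
\end{align*}
whenever $\dim V>0$. Using the isomorphism $\fM^V_H(\lambda)\simeq\BV(V)\otimes\lambda$ of \eqref{eq:verma iso} as $\D\dm$-modules, and that the group-like element $\varpi$ acts diagonally on tensor products, I obtain $\operatorname{qdim}(\fM^V_H(\lambda))=\operatorname{tr}_{\BV(V)}(\varpi)\cdot\operatorname{tr}_\lambda(\varpi)=0$.

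For the main dichotomy, if $\fL^V_H(\lambda)$ is rigid then $\fL^V_H(\lambda)=\lambda$ as $\D\dm$-module, hence $\operatorname{qdim}(\fL^V_H(\lambda))=\operatorname{tr}_\lambda(\varpi)$; a direct case-by-case computation using the weight descriptions in \S\ref{simple DDDm modules} shows this trace is nonzero on every $\lambda\in\Lambda$ (e.g.\ it is $\pm 2$ on $M_{i,k}$ for $1\le i\le n$, $\pm n$ on $M_{r,s,t}$ for $r\in\{0,1\}$, and $\pm 1$ or $\pm 2$ on the weights supported at $e$ or $y^n$). If instead $\fL^V_H(\lambda)$ is non-rigid, the vanishing $\operatorname{qdim}(\fM^V_H(\lambda))=0$ combined with additivity of quantum dimension on the composition series yields
\begin{align*}
\operatorname{qdim}(\fL^V_H(\lambda))=-\sum_\mu\operatorname{qdim}(\fL^V_H(\mu)),
\end{align*}
where the sum is over the composition factors of the maximal submodule of $\fM^V_H(\lambda)$. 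One then concludes by induction on the $\Z$-grading depth of $\fL^V_H(\lambda)$, using the structural description from Theorem~\ref{thm:simple-D(M_I)-modules} to track which composition factors are rigid (handled by the base case) versus non-rigid (vanishing by the inductive hypothesis).

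The main obstacle is precisely this last inductive step: one needs explicit control over which simples appear as composition factors of each Verma and which of those are rigid. That information is exactly what is developed in the next section, so the remark is established once the classification of simple $\D(V,\dm)$-modules is at hand.
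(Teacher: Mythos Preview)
The paper states this as a remark without proof, so there is no argument in the paper to compare against. Your setup is correct: $\varpi=y^n\chi_3$ acts as $-\id$ on each $M_{i,k}$ (since $i+k$ is odd under the spherical hypothesis), hence $\operatorname{tr}_{\BV(V)}(\varpi)=0$ and every Verma module has quantum dimension zero. Your case-by-case check that $\operatorname{tr}_\lambda(\varpi)\neq 0$ for every weight $\lambda$ is also correct.

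The final inductive step, however, is not airtight as written. You express $\operatorname{qdim}(\fL^V_H(\lambda))$ as minus the sum over the other composition factors of $\fM^V_H(\lambda)$ and then appeal to an inductive hypothesis for the non-rigid ones. But rigid composition factors could in principle appear, and those have \emph{nonzero} quantum dimension; you would then need to show their contributions cancel, which you do not do. The phrase ``handled by the base case'' is misleading here, since the base case establishes nonvanishing for rigid simples, the opposite of what the induction requires.

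A cleaner route, which you essentially already have in hand, is to bypass the induction and compute directly from Theorem~\ref{thm:simple-D(M_I)-modules}. For $\lambda\in\Lambda\setminus\Lbo$ non-rigid one has $\Res(\fL^I(\lambda))\simeq\BV(M_{I_\lambda^{\mathbf p}})\otimes\lambda$ with $I_\lambda^{\mathbf p}\neq\emptyset$, so $\operatorname{qdim}(\fL^I(\lambda))=\operatorname{tr}_{\BV(M_{I_\lambda^{\mathbf p}})}(\varpi)\cdot\operatorname{tr}_\lambda(\varpi)=0$ by your own computation. For $\lambda=M_{r,s,t}\in\Lbo$, part (b) gives
\begin{align*}
\operatorname{qdim}(\fL^I(\lambda))=\sum_{J\subseteq I}n(-1)^{(r+i_J)+(t+k_J)}=n(-1)^{r+t}\sum_{J\subseteq I}(-1)^{|J|}=0,
\end{align*}
since $i+k$ is odd for each $(i,k)\in I$ (so $i_J+k_J\equiv|J|\bmod 2$) and $|I|\geq 1$. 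Alternatively, for $\lambda\in\Lbo$ your induction does go through, because all weights of $\fM^I(\lambda)$ lie in $\Lbo$ and hence no composition factor is rigid; but for $\lambda\in\Lambda\setminus\Lbo$ the direct computation is what is needed.
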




\section{Characters of simple \texorpdfstring{$\D(V)$}{D(V)}-modules}\label{sec:simple-MI-modules} 

In this section we follow the strategy summarized in \S\ref{subsec:simples} and \S\ref{subsec:decomposable}
to describe the simple modules over 
$\D(M_I)$ for $V=M_I=\oplus_{(i,k)\in I}M_{i,k}$ with
$I$ as in Notation \ref{notation:I}. Recall the set of weight $\Lambda$ in \S\ref{simple DDDm modules}. For $\lambda\in\Lambda$, we set
\begin{align*}
\fL^{I}(\lambda):=\fL^{M_{I}}_{\ku\dm}(\lambda),
\end{align*}
the simple highest-weight module over 
$\D(M_I)$ associated with $\lambda$. The appearance of $\fL^{I}(\lambda)$ depends 
on certain subsets of $\Lambda$ where the weight $\lambda$ belongs. 
We present first these subsets and then state the results. 

First, for $(i,k) \in I$, we fix the partition $\Lambda=\Lbr_{i,k}\cup\Lbp_{i,k}\cup\Lbo$ given in 
Table \ref{tab:Lbr Lbp Lbo Miks}. The subset $\Lbr_{i,k}$ corresponds to the rigid 
simple modules when $I=\{(i,k)\}$, that is, those weights
that satisfy $\fL^{(i,k)}(\lambda) = \lambda$ as $\D(M_{i,k})$-modules, see Lemma \ref{le:Lbr Lbp 1}.
The subset $\Lbp_{i,k}$ corresponds to the simple projective modules,
that is, those 
that satisfy $\fL^{(i,k)}(\lambda) = \fM^{(i,k)}(\lambda) := \fM^{M_{i,k}}_{\ku\dm}(\lambda)$ as 
$\D(M_{i,k})$-modules, see Lemma \ref{le:Lbr Lbp 1}.
 
\setlength{\extrarowheight}{5pt}
\begin{table}[h]
\begin{center}
\begin{tabular}{l|c|c|c}
& $\Lbr_{i,k}$ & $\Lbp_{i,k}$ & $\Lbo$ 
\\
\hline
$M(e,\chi_j)$ &  
\begin{tabular}{l}
$j=1,2$,\\
$j=3,4$ if $i$ even
\end{tabular}
&
\begin{tabular}{l}
\\
$j=3,4$ if $i$ odd
\end{tabular}
& --
\\
\hline
$M(e,\rho_\ell)$ & if $\omega^{i\ell}=1$ & if $\omega^{i\ell}\neq1$ & --
\\
\hline
$M(y^n,\chi_j)$ & 
\begin{tabular}{l}
$j=1,2$ if $k$ even,\\
$j=3,4$ if $i+k$ even
\end{tabular}
&
\begin{tabular}{l}
$j=1,2$ if $k$ odd,\\
$j=3,4$ if $i+k$ odd
\end{tabular}
& --
\\
\hline
$M_{p,q}$ & if $\omega^{iq+pk}=1$ & if $\omega^{iq+pk}\neq1$ & --
\\
\hline
$M_{r,s,t}$ & -- & -- &all
\\
\hline
\end{tabular}
\end{center}
\caption{Partition of the sets of weights with respect to $M_{i,k}$}
\label{tab:Lbr Lbp Lbo Miks}
\end{table}

The rigidity or projectivity of $\fL^{I}(\lambda)$ when $|I|>1$ is determined by the subsets defined below.
\begin{definition}\label{def:Ir and Ip}
For each $\lambda\in\Lambda$, we define 
\begin{align*}
I_\lambda^{\mathbf{r}}=\{(i,k)\in I\mid\lambda\in\Lbr_{i,k}\}\quad\mbox{and}\quad 
I_\lambda^{\mathbf{p}}=\{(i,k)\in I\mid\lambda\in\Lbp_{i,k}\}.
\end{align*} 
We also set $M_{I_\lambda^{\mathbf r}}=\bigoplus_{(i,k)\in I_\lambda^{\mathbf r}}M_{i,k}$ and 
$M_{I_\lambda^{\mathbf p}}=\bigoplus_{(i,k)\in I_\lambda^{\mathbf p}}M_{i,k}$. 
\end{definition}

For $\lambda \in \Lambda \setminus \Lbo$, we have that $M_I=M_{I_\lambda^{\mathbf p}}\oplus M_{I_\lambda^{\mathbf r}}$ as $\D\DD_m$-modules,
because $I = I_\lambda^{\mathbf p} \cup I_\lambda^{\mathbf r}$ and $\Lbr_{i,k}\cap\Lbp_{i,k}=\emptyset$ for all $(i,k)\in I$. In particular, 
we are under the hypothesis of \S\ref{subsec:decomposable}, with 
$U = M_{I_\lambda^{\mathbf r}}$ and $W=M_{I_\lambda^{\mathbf p}}$. 

\smallbreak

Here is our main result which, in particular, gives the characters of the simple $\D(M_I)$-modules. To
simplify the notation, we write the associated functors  
$\fL^{J}=\fL^{M_{J}}_{\ku\dm}$ and
$\fM^{K}_J=\fM^{M_{K}}_{\BV(M_{J})\#\Bbbk \dm}$ if $M_{I} = M_{J}\oplus M_{K}$.

\begin{theorem}\label{thm:simple-D(M_I)-modules}
Let $\lambda \in \Lambda$ and $M_I=\bigoplus_{(i,k)\in I}M_{i,k} \in {}_{\D\dm}\cM$ with $I\in\II$.
The simple highest-weight $\D(M_{I})$-modules are described as follows:
\begin{enumerate}
\item[$(a)$] If $\lambda\in\Lambda\setminus\Lbo$, then $\fL^{I_\lambda^{\mathbf r}}(\lambda) = \lambda$ is rigid as $\D(M_{I_\lambda^{\mathbf r}})$-modules and
\begin{align*}
\fL^{I}(\lambda)\simeq \fM^{I_\lambda^{\mathbf p}}_{I_\lambda^{\mathbf r}}(\fL^{I_\lambda^{\mathbf r}}(\lambda))
\end{align*}
as $\D(M_I)$-modules. In particular, $\Res\big(\fL^{I}(\lambda)\big) = \BV(M_{I_\lambda^{\mathbf p}})\ot \lambda$ and 
$\dim\fL^{I}(\lambda)=4^{|I_\lambda^{\mathbf p}|}\,\dim\lambda$. 

\smallbreak

\item[$(b)$] If $\lambda=M_{r,s,t}\in\Lbo$, then as graded $\D\DD_m$-modules,
$$
\Res\big(\fL^{I}(M_{r,s,t})\big)\simeq\bigoplus_{J\subseteq I}M_{r+i_J,s+\ell_J+\epsilon_J,t+k_J}[-|J|],
$$
where $i_J=\sum_{(i,k)\in J}i$, $k_J=\sum_{(i,k)\in J}k$, $\ell_J=\sum_{(n,\ell)\in J}\ell$ and $\epsilon_J=1$ if $\ell_J\neq0$ and zero otherwise; 
$i_{J}=k_{J}=\ell_J=0$ if $J=\emptyset$. In particular, $\dim\fL^{I}(M_{r,s,t})=2^{|I|}n$.
\end{enumerate}
\end{theorem}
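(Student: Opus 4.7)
The plan is to prove both parts by induction on $|I|$, using the recursive strategy of \S\ref{subsec:decomposable}. For any $(i,k)\in I$, write $M_I = M_{i,k}\oplus M_{I'}$ with $I' = I\setminus\{(i,k)\}$. Since $\BV(M_I)=\bigwedge M_I$ carries the $-flip$ braiding, the cross braidings satisfy $c_{M_{I'},M_{i,k}}\circ c_{M_{i,k},M_{I'}}=\id$, so Remarks \ref{obs:Z igual W} and \ref{obs:Z igual W -- homogeneous} give that $Z=M_{i,k}$ is rigid simple over $\D(M_{I'},\dm)$ and $\BV(M_I)\#\ku\dm \simeq \BV(M_{i,k})\#(\BV(M_{I'})\#\ku\dm)$. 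Theorem \ref{thm:simples W op V} then provides the key recursion
\begin{align*}
\fL^I(\lambda) \simeq \fL^{M_{i,k}}_{\BV(M_{I'})\#\ku\dm}(\fL^{I'}(\lambda))
\end{align*}
as $\D(M_I,\dm)$-modules, reducing each inductive step to a single-summand analysis built on top of the inductive hypothesis.

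The next step is to settle the base case $|I|=1$ by a case analysis on the type of $\lambda$ dictated by Table \ref{tab:Lbr Lbp Lbo Miks}. For $\lambda\in\Lbr_{i,k}$, I compute the action of the operators $\Phi_{\pm\pm}$ from \eqref{eq:a+v+}--\eqref{eq:a-v-} on each homogeneous component of $\lambda$ and check that all four vanish; Remark \ref{obs:simple simple} then yields $\fL^{\{(i,k)\}}(\lambda)=\lambda$. For $\lambda\in\Lbp_{i,k}$, a similar computation shows that $\Theta$ from \eqref{eq:Phi} acts by a non-zero scalar on $\lambda$, so Remark \ref{obs:verma simple} gives $\fL^{\{(i,k)\}}(\lambda)=\fM^{\{(i,k)\}}(\lambda)$. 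For $\lambda=M_{r,s,t}\in\Lbo$, the identification $\lambda_{M_{i,k}}\simeq M(e,\chi_2)$ together with Lemmata \ref{le:mik ot mrst} and \ref{le:chi2 ot x} places us in the $\deg\mu=-1$ case of \S\ref{subsub:application 2n}; computing which summand of $M_{i,k}\otimes\lambda$ is annihilated by $\overline{M_{i,k}}$ identifies the socle component $\mu$ and hence the second layer of the head.

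For the inductive step in Part (a), one invokes the recursive versions of Remarks \ref{obs:simple simple} and \ref{obs:verma simple}: if $(i,k)\in I_\lambda^{\mathbf r}$ the $\Phi$'s still annihilate $\fL^{I'}(\lambda)$ and $M_{i,k}$ contributes trivially, while if $(i,k)\in I_\lambda^{\mathbf p}$ then $\Theta$ still acts non-trivially and the associated Verma is simple projective. Iterating until only $I_\lambda^{\mathbf p}$ remains gives $\fL^I(\lambda)\simeq \fM^{I_\lambda^{\mathbf p}}(\lambda)$; the dimension formula follows from $\dim\BV(M_{I_\lambda^{\mathbf p}}) = 4^{|I_\lambda^{\mathbf p}|}$. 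For Part (b), since $\Lbo$ is preserved under tensoring with $M_{i,k}$ by Lemma \ref{le:mik ot mrst}, every recursive step is in the $\deg\mu=-1$ case: Theorem \ref{thm:rigid tensor} splits $M_{i,k}\otimes \fL^{I'}(M_{r,s,t})\simeq \fL^{I'}(M_{i,k}\otimes M_{r,s,t})$ into two simple $\D(M_{I'},\dm)$-modules, exactly one of which survives in the head in degree $-|J'|-1$, contributing the $J=J'\cup\{(i,k)\}$ terms of the announced formula.

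The hard step will be correctly identifying which of the two summands $N_\pm$ from Lemma \ref{le:mik ot mrst} is the head component and which is the socle component $\mu$. This demands an explicit computation of $\alpha_\pm\cdot\fn_\pm$ via \eqref{eq:a+v+}--\eqref{eq:a-v-} applied to the basis vectors $\fn_\pm=\omega^{rk}\mm{-}\ot\mm{0}\pm\mm{+}\ot\mm{i}$, and then matching the resulting middle index to the combinatorial expression $s+\ell_J+\epsilon_J$; the correction $\epsilon_J$ appearing exactly when some $(n,\ell)\in J$ is a subtle parity effect traceable to the factor $\delta_{i,n}t$ in Lemma \ref{le:mik ot mrst}. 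Once this identification is settled, the remaining bookkeeping over subsets $J\subseteq I$, together with the triangular decomposition \eqref{eq:triangular decomposition of DWV}, assembles the decomposition formula.
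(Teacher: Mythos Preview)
Your proposal is correct and follows essentially the same inductive strategy as the paper: induction on $|I|$ via the recursion $\fL^I(\lambda)\simeq\fL^{M_{i,k}}_{\BV(M_{I'})\#\ku\dm}(\fL^{I'}(\lambda))$, with the base case handled by the action of the $\Phi_{\pm\pm}$ and $\Theta$ on $\lambda$ (Remarks \ref{obs:simple simple}, \ref{obs:verma simple}) and the inductive step for $\lambda\in\Lbo$ handled by Theorem \ref{thm:rigid tensor} together with Lemmata \ref{le:mik ot mrst} and \ref{le:chi2 ot x}.

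Two small points where the paper is more explicit than your outline. First, to pass from the iterated one-step Vermas $\fM^{(i,k)}(\fM^{(I')_\lambda^{\mathbf p}}(\lambda))$ to the single module $\fM^{I_\lambda^{\mathbf p}}(\lambda)$ you need Lemma \ref{le:M-composicion} (applied with base $\BV(M_{I_\lambda^{\mathbf r}})\#\ku\dm$); you use this implicitly in ``Iterating until only $I_\lambda^{\mathbf p}$ remains'' but should cite it. Second, the paper organises part $(a)$ by peeling off all of $I_\lambda^{\mathbf r}$ first (Lemma \ref{le:simple simple MIrlambda}) and only then inducting on $|I_\lambda^{\mathbf p}|$ (Lemma \ref{le:Lbr Lbp}); this avoids having to argue directly that the $\Phi$'s for $(i,k)\in I_\lambda^{\mathbf r}$ annihilate a non-rigid $\fL^{I'}(\lambda)$. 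Your more symmetric approach of picking an arbitrary $(i,k)$ also works, but the claim ``the $\Phi$'s still annihilate $\fL^{I'}(\lambda)$'' then relies on the argument sketched in \S\ref{subsub:application 2n} (the action map $\overline{W}\otimes W\otimes\fL^{I'}(\lambda)\to\fL^{I'}(\lambda)$ is a $\D(M_{I'},\dm)$-morphism vanishing on the generators $\overline{W}\otimes W\otimes\lambda$), which you should spell out.
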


\begin{proof}
With the aim of giving a clear exposition, we prove in detail the case in which $I$ does not contain pairs $(n,\ell)$. 
In particular, $\ell_{J}=0$ and $\epsilon_{J} = 0$.
The other case follows \textit{mutatis mutandis}.

We proceed by induction on the cardinal of $I$. 
The case $|I|=1$ is considered in \S\ref{subsec:MIsinglecase}, see Lemmata \ref{le:Lbr Lbp 1} and \ref{le:simples Lbo}. 
The inductive step is then proved in \S\ref{subsec:MIsemisimple}, see Lemma \ref{le:Lbr Lbp} for part $(a)$ 
and Lemma \ref{le:simple simple MIrlambda} for part $(b)$.
\end{proof}

As a direct consequence, one gets the description of the rigid and simple projective modules over $\D(M_{I})$. 

\begin{cor}
 Let $\lambda \in \Lambda$. Then
 \begin{enumerate}
  \item[$(a)$] $\fL^{I}(\lambda) = \lambda$ if and only if $I = I_\lambda^{\mathbf r}$. 
  \item[$(b)$] $\fL^{I}(\lambda) = \fM^{I}(\lambda)$ if and only if $I = I_\lambda^{\mathbf p}$.
 \end{enumerate}
\qed
\end{cor}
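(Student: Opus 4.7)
The plan is to deduce the corollary as an immediate consequence of Theorem \ref{thm:simple-D(M_I)-modules} via a dimension count. First I would record two elementary observations: $\lambda$ always embeds in $\fL^{I}(\lambda)$ as the degree-zero component (built into the construction of the head of the Verma module), and $\fL^{I}(\lambda)$ is always a quotient of $\fM^{I}(\lambda)$ by Remark \ref{obs:highest weight module}. Hence $\fL^{I}(\lambda)=\lambda$ is equivalent to $\dim\fL^{I}(\lambda)=\dim\lambda$, and $\fL^{I}(\lambda)=\fM^{I}(\lambda)$ is equivalent to $\dim\fL^{I}(\lambda)=\dim\fM^{I}(\lambda)=4^{|I|}\dim\lambda$, where the last equality uses that $\BV(M_{I})\simeq\bigwedge M_{I}$ has dimension $4^{|I|}$.

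Next I would split on whether $\lambda$ lies in $\Lbo$ or not and plug in the dimension formulas from the theorem. For $\lambda\in\Lambda\setminus\Lbo$ we have $\dim\fL^{I}(\lambda)=4^{|I_\lambda^{\mathbf{p}}|}\dim\lambda$, and the disjoint decomposition $I=I_\lambda^{\mathbf{r}}\sqcup I_\lambda^{\mathbf{p}}$ makes $|I_\lambda^{\mathbf{p}}|=0$ equivalent to $I=I_\lambda^{\mathbf{r}}$, while $|I_\lambda^{\mathbf{p}}|=|I|$ is equivalent to $I=I_\lambda^{\mathbf{p}}$; this yields (a) and (b) in this case. For $\lambda=M_{r,s,t}\in\Lbo$, the last row of Table \ref{tab:Lbr Lbp Lbo Miks} forces $I_\lambda^{\mathbf{r}}=I_\lambda^{\mathbf{p}}=\emptyset$, so both conditions should reduce to $I=\emptyset$; and indeed the identities $\dim\fL^{I}(\lambda)=2^{|I|}n$ and $\dim\lambda=n$ turn (a) into $2^{|I|}=1$ and (b) into $2^{|I|}=4^{|I|}$, both equivalent to $|I|=0$.

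The argument is essentially arithmetic once Theorem \ref{thm:simple-D(M_I)-modules} is invoked, so I do not anticipate any real obstacle. The only care required is in the case split between $\Lbo$ and its complement, and in checking that the inclusion $\lambda\hookrightarrow\fL^{I}(\lambda)$ and the surjection $\fM^{I}(\lambda)\twoheadrightarrow\fL^{I}(\lambda)$ are genuine morphisms of $\D(M_{I},\dm)$-modules, so that matching dimensions truly force equality of modules rather than merely equality of vector-space dimensions.
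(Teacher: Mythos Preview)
Your argument is correct and is precisely the kind of reasoning the paper has in mind: the corollary is stated with a bare \qed\ immediately after Theorem~\ref{thm:simple-D(M_I)-modules}, and the intended justification is exactly the dimension comparison you outline, using $\fL^{I}(\lambda)_{0}=\lambda$, the quotient map $\fM^{I}(\lambda)\twoheadrightarrow\fL^{I}(\lambda)$, and the formulas $\dim\fL^{I}(\lambda)=4^{|I_{\lambda}^{\mathbf p}|}\dim\lambda$ (for $\lambda\notin\Lbo$) and $\dim\fL^{I}(\lambda)=2^{|I|}n$ (for $\lambda\in\Lbo$).
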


\subsection{The singleton case} \label{subsec:MIsinglecase} \

\smallbreak

We assume here that $I=\{(i,k)\}$ with $1\leq i\leq n-1$, $0\leq k\leq m-1$ and $\omega^{ik}=-1$.
We keep the notation of \S\ref{subsec:drinfeld-double-dm}. 
In particular, $V= M_{i,k}$, $v_{\pm}$ and $\alpha_{\pm}$ are the generators of $\D(M_{i,k})$ that belong to $M_{i,k}$ and its dual, respectively. 
The elements $\Phi_{\bullet,\circ}$ with $\bullet,\circ \in \{+,-\}$   
and $\Theta$ defined in \eqref{eq:a+v+}--\eqref{eq:a-v-} and \eqref{eq:Phi} are instrumental to determine which simple modules are rigid or projective.

\begin{lema}\label{le:Lbr Lbp 1} 
Let $\lambda \in \Lambda$ and $\fL^{I}(\lambda)$ be a simple $\D(M_{i,k})$-module. Then
\begin{enumerate}
\item[$(a)$] \label{le:Lbr i k}  
$\fL^{I}(\lambda)=\lambda$ if and only if $\lambda\in \Lbr_{i,k}$.
\item[$(b)$] \label{le:Lbp i k} 
$\fL^{I}(\lambda)=\fM^{I}(\lambda)$ if and only if $\lambda\in \Lbp_{i,k}$.
\end{enumerate}
\end{lema}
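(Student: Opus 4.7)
The plan is to apply the criteria of Remarks \ref{obs:simple simple} and \ref{obs:verma simple} and match the resulting conditions against the entries of Table \ref{tab:Lbr Lbp Lbo Miks} family by family. By Remark \ref{obs:simple simple}, part $(a)$ will amount to showing that the four elements $\Phi_{++},\Phi_{+-},\Phi_{-+},\Phi_{--}$ of \eqref{eq:a+v+}--\eqref{eq:a-v-} all annihilate $\lambda$ if and only if $\lambda\in\Lbr_{i,k}$, while by Remark \ref{obs:verma simple}, part $(b)$ will amount to showing that the element $\Theta$ of \eqref{eq:Phi} acts non-trivially on $\lambda$ if and only if $\lambda\in\Lbp_{i,k}$.

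The key simplification making these checks tractable is that each $\delta_{y^s}$ and $\delta_{xy^s}$ acts on a homogeneous component of $\lambda$ as either zero or the identity, according to its $\dm$-grading. Consequently each $\Phi_{\bullet,\circ}$ restricts to a scalar operator on every homogeneous line of $\lambda$: on a vector of degree $y^a$ one has $\Phi_{+-}\cdot\mathfrak{m}=\Phi_{-+}\cdot\mathfrak{m}=0$ while $\Phi_{++}\cdot\mathfrak{m}=\mathfrak{m}-\omega^{ak}y^{i}\cdot\mathfrak{m}$ and similarly for $\Phi_{--}$; on a vector of degree $xy^a$, $\Phi_{++}$ and $\Phi_{--}$ act as the identity. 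I would then traverse the five families of simples in \S\ref{subsec:the mechis}--\S\ref{subsec:simples xy}. For the one-dimensional $M(e,\chi_j)$ and $M(y^n,\chi_j)$ the eigenvalue of $y^i$ on $\mathfrak{m}$ is $1$ or $(-1)^i$ according to $j$ and to the conjugacy class, so the vanishing of $1-\omega^{ak}(\pm 1)^i$ reproduces the first and third rows of the table. For $M(e,\rho_\ell)$ the same analysis on the two eigenvectors $\mathfrak{m}_{\pm}$ gives the symmetric condition $\omega^{\pm i\ell}=1$, and for $M_{p,q}$ one gets $\omega^{iq+pk}=1$; these match rows two and four. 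For part $(b)$, whenever any $\Phi_{\bullet,\circ}$ fails to annihilate $\lambda$, one checks that $\Phi_{++}\Phi_{--}\cdot\mathfrak{m}=(1-\zeta)(1-\zeta^{-1})\mathfrak{m}$ for a non-trivial root of unity $\zeta$, hence non-zero, while $\Phi_{+-}\Phi_{-+}\cdot\mathfrak{m}=0$, so $\Theta\cdot\mathfrak{m}\neq 0$ and $\lambda\in\Lbp_{i,k}$; conversely, if $\lambda\in\Lbr_{i,k}$ then all $\Phi_{\bullet,\circ}$ annihilate $\lambda$, hence so does $\Theta$.

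The only remaining case is $\lambda=M_{r,s,t}$, which is supported on the non-abelian conjugacy classes $\cO_x$ or $\cO_{xy}$. Here $\Phi_{++}\cdot\mathfrak{m}=\mathfrak{m}\neq0$ for each basis vector, so rigidity fails immediately; and a direct computation in $\D\dm$ using \eqref{eq:relations of DDDm} and the hypothesis $\omega^{ik}=-1$ simplifies $\Phi_{+-}\Phi_{-+}$ to $\sum_s\delta_{xy^s}$, showing that both $\Phi_{++}\Phi_{--}$ and $\Phi_{+-}\Phi_{-+}$ act as the identity on every element of $M_{r,s,t}$; hence $\Theta\cdot\mathfrak{m}=0$ and projectivity also fails, in agreement with the fact that $M_{r,s,t}\in\Lbo$. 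The main obstacle I foresee is purely notational bookkeeping: each family carries its own parities (of $i$, $k$, and the character exponents) together with divisibility conditions among roots of unity, and these must be carefully matched against Table \ref{tab:Lbr Lbp Lbo Miks} while tracking the dual $\dm$-degrees of $\mathfrak{m}_+$ and $\mathfrak{m}_-$ via the coaction formulas \eqref{eq:coactions}.
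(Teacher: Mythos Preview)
Your proposal is correct and follows essentially the same approach as the paper: apply Remarks~\ref{obs:simple simple} and~\ref{obs:verma simple} by computing the action of $\Phi_{\bullet,\circ}$ and $\Theta$ on each family of weights, using the $\DD_m$-grading to see which $\delta$-terms survive. The paper's proof is much terser---it simply asserts that $\Phi_{\pm,\mp}$ vanishes on $\Lbr_{i,k}\cup\Lbp_{i,k}$ and is non-zero on $\Lbo$ ``by looking at the $\DD_m$-degree'', and that the remaining checks follow ``through a sheer calculation''---whereas you have spelled out the eigenvalue analysis and the cancellation $\Phi_{++}\Phi_{--}=\Phi_{+-}\Phi_{-+}=\id$ on $M_{r,s,t}$ explicitly, which is exactly the content hidden behind those phrases.
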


\begin{proof}
By looking at the $\DD_m$-degree, one easily sees that  
$\Phi_{\pm,\mp}\,\lambda=0$ for $\lambda\in\Lbr_{i,k}\cup\Lbp_{i,k}$, but it is non-zero for $\lambda\in\Lbo$. 
Besides, one may check that $\Phi_{\pm,\pm}\,\lambda=0$ for $\lambda\in\Lbr_{i,k}$, 
meanwhile it is non-zero for $\lambda\in\Lbp_{i,k}\cup\Lbo$. 
Hence $(a)$
follows from Remark \ref{obs:simple simple} for $V=M_{i,k}$. 

Analogously, through a sheer calculation 
one can show that $\Theta\lambda\neq0$ for $\lambda \in\Lbp_{i,k}$ and $\Theta\lambda=0$ for $\lambda\in\Lbo$. 
Thus, $(b)$
follows from Remark \ref{obs:verma simple} for $V=M_{i,k}$.
\end{proof}

For the remaining simple modules, we proceed as in \S\ref{subsub:application 2}.

\begin{lema}\label{le:simples Lbo}
Let $\lambda=M_{r,s,t}\in\Lbo$. Then, as graded $\D\DD_m$-modules,
\begin{align*}
\Res\big(\fL^{I}(M_{r,s,t}))&\simeq M_{r,s,t}[0]\oplus M_{r+i,s,t+k}[-1],
\\
\Res\big(\fS^{I}(M_{r,s,t}))&\simeq M_{r+i,s+1,t+k}[-1]\oplus M_{r,s+1,t}[-2].
\end{align*}
Moreover,
 $\fL^{I}(M_{r,s,t})\simeq\fM^{I}(M_{r,s,t})/\fS^{I}(M_{r,s,t})$.
\end{lema}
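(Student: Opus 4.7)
The plan is to follow the strategy of \S\ref{subsub:application 2} for $V$ two-dimensional with braiding $-\mathrm{flip}$, applied here to $V=M_{i,k}$. The volume element $v_{top}=v_+v_-$ of $\BV(V)=\ku\oplus V\oplus\lambda_V$ has trivial $\dm$-degree and satisfies $x\cdot v_{top}=-v_{top}$, $y\cdot v_{top}=v_{top}$, so $\lambda_V\simeq M(e,\chi_2)$. Applying Lemmata~\ref{le:mik ot mrst} and \ref{le:chi2 ot x} to the tensor products in each graded component yields
\begin{equation*}
\Res\bigl(\fM^I(M_{r,s,t})\bigr)\simeq M_{r,s,t}[0]\oplus\bigl(M_{r+i,s+1,t+k}\oplus M_{r+i,s,t+k}\bigr)[-1]\oplus M_{r,s+1,t}[-2].
\end{equation*}

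For the socle, I would use the identity $\fS^I(\lambda)=\BV(\oV)(\lambda_V\otimes\lambda)=\BV(\oV)(v_{top}\otimes\lambda)$ and analyse the three graded parts coming from $\BV(\oV)=\ku\oplus\oV\oplus\lambda_{\oV}$. The degree $-2$ part is $\lambda_V\lambda\simeq M_{r,s+1,t}$. The degree $0$ part is $\at\vt\cdot\lambda$, which by \eqref{eq:discriminante mik} equals $\Theta\cdot\lambda$ (the higher-degree terms vanish since $\oV$ annihilates $\lambda$); this is zero because $\lambda\in\Lbo$ forces $\Theta\cdot\lambda=0$ by the argument in the proof of Lemma~\ref{le:Lbr Lbp 1}(b). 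For the degree $-1$ part, applying the commutation rules \eqref{eq:a+v+}--\eqref{eq:a-v-} to $\alpha_\pm\cdot(v_{top}\otimes\fm)$ and using the $\dm$-gradings of $\fm_0$ and $\fm_i=y^{-i}\cdot\fm_0$ together with $\omega^{2ik}=1$, a direct computation shows that $\oV\cdot(v_{top}\otimes\lambda)$ lands in the simple summand $N_+\simeq M_{r+i,s+1,t+k}$ from Lemma~\ref{le:mik ot mrst} rather than in $N_-$. Since this image is non-zero and $N_+$ is simple as a $\D\dm$-module, the degree $-1$ part of the socle fills all of $N_+$, giving the claimed formula for $\Res(\fS^I(M_{r,s,t}))$.

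Finally, to identify $\fL^I(\lambda)\simeq\fM^I(\lambda)/\fS^I(\lambda)$, it suffices to show that the quotient $\fQ:=\fM^I(\lambda)/\fS^I(\lambda)$ is simple, as then Remark~\ref{obs:highest weight module} applied to the highest-weight $\lambda$ generating $\fQ$ gives the isomorphism with $\fL^I(\lambda)$. By the computations above, $\fQ$ has $\D\dm$-decomposition $M_{r,s,t}[0]\oplus M_{r+i,s,t+k}[-1]$. Any non-zero $\D(V,\dm)$-submodule $K$ of $\fQ$ either meets $\lambda=\fQ_0$---in which case the simplicity of $\lambda$ as a $\D\dm$-module forces $K\supseteq\lambda$ and hence $K=\fQ$---or is contained in $\fQ_{-1}\simeq N_-$; the latter is ruled out because $\alpha_-\cdot\fn_-$ is a non-zero element of $\lambda$ by an analogous commutation calculation, so $\oV\cdot K$ cannot remain inside $\fQ_{-1}$. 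The main obstacle is the bookkeeping in the socle computation: tracking the phases $\omega^{\pm sk}$ in $\Phi_{\pm\pm}$ and $\Phi_{\pm\mp}$ against the $\dm$-degrees of $y^{\pm i}\cdot\fm_0$ requires $\omega^{ik}=-1$ to be invoked at precisely the right places for the image of $\oV\cdot(v_{top}\otimes\lambda)$ to align with $N_+$ and not $N_-$.
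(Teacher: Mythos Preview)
Your argument is correct and reaches the same conclusion, but the route differs from the paper's in two places. First, the paper does \emph{not} compute $\alpha_\pm\cdot(v_{top}\otimes\fm)$ to locate the degree $-1$ part of the socle. Instead it runs a counting argument: by Lemma~\ref{le:Lbr Lbp 1} all four weights occurring in $\Res(\fM^I(M_{r,s,t}))$ lie in $\Lbo$, so no composition factor can be rigid (concentrated in one degree) and the Verma is not simple; hence there are exactly two composition factors, each spanning two consecutive degrees. This forces the socle to be $\mu[-1]\oplus\lambda_V\lambda[-2]$ with $\mu$ the unique highest-weight in degree $-1$, and the paper then identifies $\mu$ by the much lighter computation $\alpha_+\fn_\pm=-\mm{i}\pm\mm{i}$, which shows $\fn_+$ is killed by $\oV$ while $\fn_-$ is not. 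Second, the paper obtains $\fL^I(\lambda)\simeq\fM^I(\lambda)/\fS^I(\lambda)$ immediately from the length-two composition series, rather than by your direct simplicity check on the quotient.

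Your bottom-up computation of $\BV(\oV)(v_{top}\otimes\lambda)$ is valid but heavier: moving $\alpha_\pm$ past $v_+v_-$ produces terms like $\Phi_{++}\cdot(v_-\otimes\fm)$ that require unpacking the diagonal $\D\dm$-action, whereas the paper's $\alpha_+\fn_\pm$ needs only a single application of \eqref{eq:a+v+}--\eqref{eq:a+v-}. One point you leave implicit is \emph{why} the image $\oV\cdot(v_{top}\otimes\lambda)$ is non-zero: the cleanest justification is that otherwise $\fS^I(\lambda)=\lambda_V\lambda[-2]$ would be rigid, contradicting $M_{r,s+1,t}\in\Lbo$ and Lemma~\ref{le:Lbr Lbp 1}(a). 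What your approach buys is independence from the counting argument; what the paper's buys is a shorter calculation and a structural explanation for why the Verma has exactly two composition factors.
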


\begin{proof} 
Taking into account (see Figure \ref{fig:M lambda Mik simple}) 
that $\BV(M_{i,k}) = \bigwedge M_{i,k} = \Bbbk \oplus M_{i,k} \oplus \Bbbk v_{top}$ with 
$\BV^{0}(M_{i,k}) = \ku$, $\BV^{1}(M_{i,k}) = M_{i,k}$, $\BV^{2}(M_{i,k}) = \Bbbk v_{top}$
and $\Bbbk v_{top} \simeq M(e,\chi_{2})$ as $\D\dm$-modules, 
we have that 
$\Res\big(\fM^{I}(M_{r,s,t})\big)$ is the direct sum of four weights. Indeed,
$\Res\big(\fM^{I}(M_{r,s,t})_{0}\big)= \Bbbk\ot M_{r,s,t} =M_{r,s,t}$, 
$\Res\big(\fM^{I}(M_{r,s,t})_{-1}\big) = M_{i,k}\ot M_{r,s,t} \simeq M_{i+r,s+1,t+k}\oplus M_{i+r,s,t+k}$
 by Lemma \ref{le:mik ot mrst}, and $ \Res\big(\fM^{I}(M_{r,s,t}) _{-2}\big)=\Bbbk v_{top} \ot M_{r,s,t} \simeq 
M(e,\chi_{2}) \ot M_{r,s,t} \simeq M_{r,s+1,t}$ by Lemma \ref{le:chi2 ot x}. Note that all these weights are in $\Lbo$. 
Then $\fM^{I}(M_{r,s,t})$ is not simple and its composition factors are not concentrated in a single degree 
by Lemma \ref{le:Lbr Lbp 1}, as they are not rigid and consist of more than a weight.

\begin{figure}[!h]
\begin{tikzpicture}
\begin{scope}[on background layer]
\draw[dotted] (-4.5,1) -- (5.9,1) node at (6.15,1) {$0$};

\draw[dotted] (-4.5,0) --  (6,0) node at (6,0) {$-1$} ;

\draw[dotted] (-4.5,-1) -- (6,-1) node at (6,-1) {$-2$};

\draw[->] (-3,.75) to [out=18,in=162]
 node [midway,above,sloped] {$\ot\lambda$} (2,.75);
\end{scope}

\node at (-6,0) {};

\begin{scope}[xshift=2.5cm,xscale=.6]
\fill[fill=gray!50] 
(-.15,-1.15) to [out=315,in=225] 
(.15,-1.15) to
[out=45,in=-45] 
(.15,-.85) --
(-.85,.15) to
[out=135,in=45]
(-1.15,.15) to
[out=225,in=135]
(-1.15,-.15) --
(-.15,-1.15);

\node[circle,fill=black,inner sep=0pt, minimum width=5pt,label=above:{$M_{r,s,t}$}] at (0,1) {};

%

\node[circle,fill=black,inner sep=0pt, minimum width=5pt,label={[fill=white]right:$M_{r+i,s,t+k}$}] at (1,0) {};

\node[circle,fill=black,inner sep=0pt, minimum width=5pt,label={[fill=white,label distance=4pt]left:$M_{r+i,s+1,t+k}$}] at (-1,0) {};

\node[circle,fill=black,inner sep=0pt, minimum width=5pt,label=below:{$M_{r,s+1,t}$}] at (0,-1) {};
\end{scope}

\begin{scope}[xshift=-3.5cm]

\node[circle,fill=black,inner sep=0pt, minimum width=5pt,label=above:{$\ku$}] at (0,1) {};

\node[circle,fill=black,inner sep=0pt, minimum width=5pt,label={[fill=white]left:$M_{i,k}$}] at (0,0) {};

\node[circle,fill=black,inner sep=0pt, minimum width=5pt,label=below:{$M(e,\chi_2)$}] at (0,-1) {};
\end{scope}

\end{tikzpicture}
\caption{The big dots represent the weights of $\BV(M_{i,k})$ and $\fM^{(i,k)}(\lambda)$. 
Their degrees are indicated on the right. Those in the shadow region form the socle $\fS^{(i,k)}(\lambda)$;
the others the head $\fL^{(i,k)}(\lambda)$.}
\label{fig:M lambda Mik simple}
\end{figure}
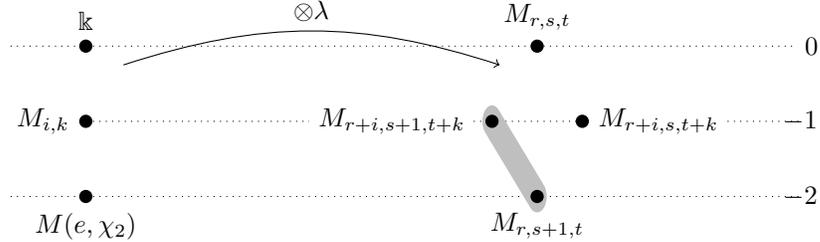

We deduce then that $\fM^{I}(M_{r,s,t})$ has exactly two composition factors, each of them has to be the direct 
sum of two weights. 
One must be the socle $\fS^{I}(M_{r,s,t})$ with $\Res\big(\fS^{I}(M_{r,s,t})\big)\simeq \mu\oplus(\ku\vt\ot M_{r,s,t})$ as $\D\DD_m$-modules, 
where $\mu$ is the unique highest-weight in degree $-1$ and $\fS^{I}(M_{r,s,t})\simeq\fL^{I}(\mu)[-1]$ by Corollary \ref{cor:highest weight of the socle}. 
The other composition factor is the head $\fL^{I}(M_{r,s,t})$ with $\fL^{I}(M_{r,s,t})\simeq\fM^{I}(M_{r,s,t})/\fS^{I}(M_{r,s,t})$ 
as $\D(M_{i,k})$-modules. 
Also,  
$\Res\big(\fL^{I}(M_{r,s,t})\big)\simeq M_{r,s,t}\oplus \overline{\lambda}$ as $\D\DD_m$-modules, where $\overline{\lambda}$
is the complement of $\mu$ in degree $-1$.

Hence, we should determine which weight in degree $-1$ is annihilated by $\oV =\overline{M_{ik}}$. By Lemma \ref{le:mik ot mrst},
we know these weights are generated by
$\fn_{\pm}=\omega^{rk}v_-\ot\mm{0}\pm v_+\ot\mm{i}$. Using \eqref{eq:a+v+}-\eqref{eq:a+v-}, we see that 
\begin{align}\label{eq: alpha+ on -1}
\alpha_+\fn_{\pm}=\alpha_+(\omega^{rk}v_-\ot\mm{0}\pm v_+\ot\mm{i})=-\mm{i}\pm\mm{i}. 
\end{align}
Then the action of $\overline{M_{ik}}$ on the weight generated by $\fn_-$ is non-trivial and
hence $\mu$ must be the weight generated by $\fn_+$; that is, $\mu =  M_{r+i,s+1,t+k}$. 
Therefore, we have that $\Res\big(\fS^{I}(\lambda)\big)\simeq M_{r+i,s+1,t+k}[-1]\oplus M_{r,s+1,t}[-2]$ and 
$\Res\big(\fL^{I}(\lambda)\big) \simeq M_{r,s,t}[0] \oplus M_{r+i,s,t+k}[-1]$.
\end{proof}

\begin{exa}
In Figure \ref{fig:L0st Mik par} below, we depict the simple module $\fL^{I}(M_{0,s,t})$ over $\D(M_{i,k})$ for $i$ even.
The nodes $\mm{j}$ denote the basis elements of the $\D\dm$-direct summands 
$M_{0,s,t}$ and $M_{0,s,t+k}$ of $\Res\big(\fL^{I}(M_{0,s,t})\big)$.
In each node, there should be two arrows going in and two arrows 
going out, but we only draw those corresponding to $\mm{\frac{i}{2}}$ in level $-1$ to make the diagram easy to read.
Keep the notation as in the proof of Lemma \ref{le:simples Lbo}, with 
$\fn_{\pm}=v_-\ot\mm{0}\pm v_+\ot\mm{i}$ and $\mm{0}, \mm{i} \in M_{0,s,t}$.
Set $\overline{\fn_{\pm}}=v_-\mm{0}\pm v_+\mm{i}$ for the images of these elements
in the quotient $\fM^{I}(M_{0,s,t})/\fS^{I}(M_{0,s,t})$.
Since $\overline{\fn_{+}}=0$, we have that $v_-\mm{0}=-v_+\mm{i}$ as elements of degree $-1$. 
Looking at the $\D\dm$-action,
one gets that both elements 
equal (a non-zero scalar multiple of) $\mm{\frac{i}{2}}$ in $M_{0,s,t+k}$ inside $\Res\big(\fL^{I}(M_{0,s,t})\big)$. 
This is depicted by the two arrows arriving at $\mm{\frac{i}{2}}$. Using \eqref{eq:a+v+} and \eqref{eq:a-v-}, one 
may see that $\alpha_+\mm{\frac{i}{2}}=-\mm{i}$ and $\alpha_-\mm{\frac{i}{2}}=\mm{0}$, respectively; these are the two arrows leaving $\mm{\frac{i}{2}}$.
\end{exa}

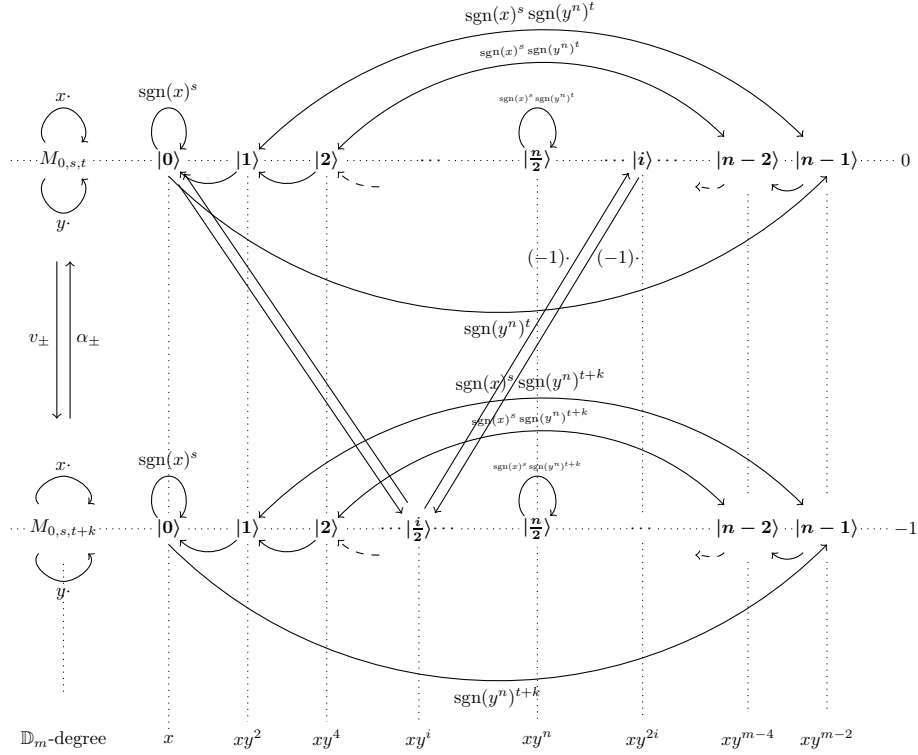
\begin{figure}[!h]
\begin{tikzpicture}[scale=.7,every node/.style={scale=0.7}]

\begin{scope}[yshift=7cm]
\draw[dotted] (-1,0) -- (16,0) node[circle,fill=white,inner sep=0pt, minimum width=5pt] {$0$};
\node[circle,fill=white,inner sep=0pt, minimum width=5pt] (M0st) at (0,0) {$M_{0,s,t}$};

\draw[->] (M0st) to [out=135,in=180]
 (0,1) node [above,sloped] {$x\cdot$} to [out=00,in=45] (M0st);

\draw[->] (M0st) to [out=-135,in=180]
 (0,-1) node [below,sloped] {$y\cdot$} to [out=00,in=-45] (M0st);

\node[circle,fill=white,inner sep=0pt, minimum width=5pt] (0) at (2,0) {$\mm{0}$};
\node[circle,fill=white,inner sep=0pt, minimum width=5pt] (1) at (3.5,0) {$\mm{1}$};
\node[circle,fill=white,inner sep=0pt, minimum width=5pt] (2) at (5,0) {$\mm{2}$};
\node[circle,fill=white,inner sep=0pt, minimum width=5pt]     at (7,0) {$\cdots$};
\node[circle,fill=white,inner sep=0pt, minimum width=5pt] (n2) at (9,0) {$\mm{\frac{n}{2}}$};
\node[circle,fill=white,inner sep=0pt, minimum width=5pt]     at (10.5,0) {$\cdots$};
\node[circle,fill=white,inner sep=0pt, minimum width=5pt]  (i)   at (11,0) {$\mm{i}$};
\node[circle,fill=white,inner sep=0pt, minimum width=5pt]     at (11.5,0) {$\cdots$};
\node[circle,fill=white,inner sep=0pt, minimum width=5pt] (n-2) at (13,0) {$\mm{n-2}$};
\node[circle,fill=white,inner sep=0pt, minimum width=5pt] (n-1) at (14.5,0) {$\mm{n-1}$};

\draw[->] (0) to [out=135,in=180]
 (2,1) node [above,sloped] {$\sgn(x)^s$} to [out=00,in=45] (0);

\draw[<->] (1) to [out=45,in=135]  node [above,sloped] {$\sgn(x)^s\sgn(y^n)^t$} (n-1);
\draw[<->] (2) to [out=45,in=135] node [above,sloped] {\tiny$\sgn(x)^s\sgn(y^n)^t$}(n-2) ;
\draw[->] (n2) to [out=135,in=180]
 (9,1) node [above,sloped,scale=.75] {\tiny$\sgn(x)^s\sgn(y^n)^t$} to [out=0,in=45] (n2);

\draw[->] (2,-.3) to [out=-45,in=-135]   node [below,sloped] {$\sgn(y^n)^t$} (14.5,-.3);

\draw[->] (n-1) to [out=225,in=-45] (n-2); 

\draw[->,dashed] (n-2) to [out=225,in=0] (12,-.5); 

\draw[->,dashed] (6,-.5) to [out=180,in=-45] (2);

\draw[->] (2) to [out=225,in=-45] (1); 
\draw[->] (1) to [out=-135,in=-45] (0);

\end{scope}

\draw[dotted] (-1,0) -- (16,0) node[circle,fill=white,inner sep=0pt, minimum width=5pt] {$-1$};

\node[circle,fill=white,inner sep=0pt, minimum width=5pt] (M0stk) at (0,0) {$M_{0,s,t+k}$};

\node[circle,fill=white,inner sep=0pt, minimum width=5pt] (D) at (0,-4) {$\DD_m$-degree};

\draw[->] (M0stk) to [out=135,in=180]
 (0,1) node [above,sloped] {$x\cdot$} to [out=00,in=45] (M0stk);

\draw[->] (M0stk) to [out=-135,in=180]
 (0,-1) node [below,sloped] {$y\cdot$} to [out=00,in=-45] (M0stk);

\node[circle,fill=white,inner sep=0pt, minimum width=5pt] (0k) at (2,0) {$\mm{0}$};
\node[circle,fill=white,inner sep=0pt, minimum width=5pt] (xk) at (2,-4) {$x$};
\node[circle,fill=white,inner sep=0pt, minimum width=5pt] (1k) at (3.5,0) {$\mm{1}$};
\node[circle,fill=white,inner sep=0pt, minimum width=5pt] (xy2k) at (3.5,-4) {$xy^2$};
\node[circle,fill=white,inner sep=0pt, minimum width=5pt] (2k) at (5,0) {$\mm{2}$};
\node[circle,fill=white,inner sep=0pt, minimum width=5pt]     at (6.25,0) {$\cdots$};
\node[circle,fill=white,inner sep=0pt, minimum width=5pt]  (i2k)   at (6.75,0) {$\mm{\frac{i}{2}}$};
\node[circle,fill=white,inner sep=0pt, minimum width=5pt]  (xyi2k)   at (6.75,-4) {$xy^i$};
\node[circle,fill=white,inner sep=0pt, minimum width=5pt] (xy4k) at (5,-4) {$xy^4$};
\node[circle,fill=white,inner sep=0pt, minimum width=5pt]     at (7.25,0) {$\cdots$};
\node[circle,fill=white,inner sep=0pt, minimum width=5pt] (n2k) at (9,0) {$\mm{\frac{n}{2}}$};
\node[circle,fill=white,inner sep=0pt, minimum width=5pt] (xynk) at (9,-4) {$xy^n$};
\node[circle,fill=white,inner sep=0pt, minimum width=5pt]     at (11,0) {$\cdots$};
\node[circle,fill=white,inner sep=0pt, minimum width=5pt]  (xyik)   at (11,-4) {$xy^{2i}$};
\node[circle,fill=white,inner sep=0pt, minimum width=5pt] (n-2k) at (13,0) {$\mm{n-2}$};
\node[circle,fill=white,inner sep=0pt, minimum width=5pt] (xyn-2k) at (13,-4) {$xy^{m-4}$};
\node[circle,fill=white,inner sep=0pt, minimum width=5pt] (n-1k) at (14.5,0) {$\mm{n-1}$};
\node[circle,fill=white,inner sep=0pt, minimum width=5pt] (xyn-1k) at (14.5,-4) {$xy^{m-2}$};

\begin{scope}[on background layer]
\draw[dotted] (M0stk) -- (D);
\draw[dotted] (0) -- (xk);
\draw[dotted] (1) -- (xy2k);
\draw[dotted] (2) -- (xy4k);
\draw[dotted] (i2k) -- (xyi2k);
\draw[dotted] (i) -- (xyik);
\draw[dotted] (n2) -- (xynk);
\draw[dotted] (n-2) -- (xyn-2k);
\draw[dotted] (n-1) -- (xyn-1k);
\end{scope}

\draw[->] (0k) to [out=135,in=180]
 (2,1) node [above,sloped] {$\sgn(x)^s$} to [out=00,in=45] (0k);

\draw[<->] (1k) to [out=45,in=135]  node [above,sloped] {$\sgn(x)^s\sgn(y^n)^{t+k}$} (n-1k);

\draw[<->] (2k) to [out=45,in=135] node [above,sloped] {\tiny$\sgn(x)^s\sgn(y^n)^{t+k}$}(n-2k) ;

\draw[->] (n2k) to [out=135,in=180]
 (9,1) node [above,sloped,scale=.75] {\tiny$\sgn(x)^s\sgn(y^n)^{t+k}$} to [out=0,in=45] (n2k);

\draw[->] (2,-.3) to [out=-45,in=-135]   node [below,sloped] {$\sgn(y^n)^{t+k}$} (14.5,-.3);

\draw[->] (n-1k) to [out=225,in=-45] (n-2k); 

\draw[->,dashed] (n-2k) to [out=225,in=0] (12,-.5); 

\draw[->,dashed] (6,-.5) to [out=180,in=-45] (2k);

\draw[->] (2k) to [out=225,in=-45] (1k); 
\draw[->] (1k) to [out=-135,in=-45] (0k);

%

\draw[transform canvas={xshift=-2.5pt},shorten <=1cm,shorten >=1cm,->] (M0st) -- node [left] {$v_{\pm}$} (M0stk);
\draw[transform canvas={xshift=2.5pt},shorten <=1cm,shorten >=1cm,<-] (M0st) -- node [right] {$\alpha_{\pm}$} (M0stk);

\draw[transform canvas={xshift=-2.5pt},shorten <=5pt,->] (0) -- (i2k);
\draw[transform canvas={xshift=2.5pt},shorten <=2pt,->] (i) -- node [pos=.25,right] {$(-1)\cdot$} (i2k);

\draw[transform canvas={xshift=2.5pt},shorten <=5pt,->] (i2k) -- (0);
\draw[transform canvas={xshift=-2.5pt},shorten <=2pt,->] (i2k) -- node [pos=.75,left] {$(-1)\cdot$} (i);

\end{tikzpicture}
\caption{$\fL^{(i,k)}(M_{0,s,t})$ over $\D(M_{i,k})$ for $i$ even.}
\label{fig:L0st Mik par}
\end{figure}

\subsection{The recursive step} \label{subsec:MIsemisimple}\

Fix a decomposition $M_I=\oplus_{(i,k)\in I}M_{i,k}$ with $|I|\geq1$ 
and keep the notation of \S\ref{subsec:drinfeld-double-dm}
As in the preceding subsection, we divide our analysis 
with respect to a partition on the set $\Lambda$. Here, we simply take $\Lambda=\Lbo \cup (\Lambda \setminus \Lbo)$.

The first lemma asserts that $\lambda\in\Lambda\setminus\Lbo $ is a rigid module 
over $\D(M_{I_\lambda^{\mathbf r}})$. 

\begin{lema}\label{le:simple simple MIrlambda}
Let $\lambda\in\Lambda\setminus\Lbo$. Then $\fL^{I_\lambda^{\mathbf r}}(\lambda)\simeq \lambda$, that is, $\lambda$
is a simple $\D(M_{I_\lambda^{\mathbf r}})$-module 
by letting $M_{I_\lambda^{\mathbf r}}$ and $\overline{M_{I_\lambda^{\mathbf r}}}$ act trivially.
\end{lema}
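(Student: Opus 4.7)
The plan is to proceed by induction on the cardinality $r=|I_\lambda^{\mathbf{r}}|$. The base case $r=1$ is precisely Lemma \ref{le:Lbr Lbp 1}(a), since then $I_\lambda^{\mathbf{r}}=\{(i,k)\}$ for some $(i,k)$ with $\lambda\in\Lbr_{i,k}$, and the elements $\Phi_{\bullet,\circ}$ attached to that pair annihilate $\lambda$.

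For the inductive step, pick any $(i,k)\in I_\lambda^{\mathbf{r}}$ and write $M_{I_\lambda^{\mathbf{r}}} = W\oplus U$ with $W=M_{i,k}$ and $U=M_{I_\lambda^{\mathbf{r}}\setminus\{(i,k)\}}$. Since $\BV(M_{I_\lambda^{\mathbf{r}}})=\bigwedge M_{I_\lambda^{\mathbf{r}}}$ has braiding $-\mathrm{flip}$ on $M_{I_\lambda^{\mathbf{r}}}\ot M_{I_\lambda^{\mathbf{r}}}$, the compatibility $c_{U,W}\circ c_{W,U}=\id_{W\ot U}$ holds. Hence Remark \ref{obs:Z igual W} gives $Z=W$ and
\begin{align*}
\BV(M_{I_\lambda^{\mathbf{r}}})\#\ku\DD_m \simeq \BV(W)\#\bigl(\BV(U)\#\ku\DD_m\bigr).
\end{align*}
By the inductive hypothesis, $\fL^{I_\lambda^{\mathbf{r}}\setminus\{(i,k)\}}(\lambda)\simeq\lambda$ with $U$ and $\overline{U}$ acting trivially. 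Then Theorem \ref{thm:simples W op V} yields
\begin{align*}
\fL^{I_\lambda^{\mathbf{r}}}(\lambda) \simeq \fL^{W}_{\BV(U)\#\ku\DD_m}\bigl(\fL^{I_\lambda^{\mathbf{r}}\setminus\{(i,k)\}}(\lambda)\bigr) = \fL^{W}_{\BV(U)\#\ku\DD_m}(\lambda).
\end{align*}

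It remains to show that the right-hand side equals $\lambda$, i.e., that $\lambda$ is rigid in the recursive sense with respect to $W$. By the recursive version of Remark \ref{obs:simple simple}, it suffices to check that the four elements $\Phi_{\bullet,\circ}\in\D\DD_m$ attached to the generators $v_\pm\in W$ and $\alpha_\pm\in\overline{W}$ act trivially on $\lambda$. But these are exactly the elements appearing in Lemma \ref{le:Lbr Lbp 1}(a) for the pair $(i,k)$: since $(i,k)\in I_\lambda^{\mathbf{r}}$, we have $\lambda\in\Lbr_{i,k}$, and it was shown in the proof of that lemma that $\Phi_{\pm,\pm}\lambda=0$ and $\Phi_{\pm,\mp}\lambda=0$. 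This concludes the induction.

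The main subtlety — really the only non-routine point — is to make sure one is entitled to apply the recursive version of Remark \ref{obs:simple simple} to the inner step, which requires the identification $Z=W$ provided by Remark \ref{obs:Z igual W}; this is precisely where the exterior-algebra structure of $\BV(M_{I_\lambda^{\mathbf{r}}})$ is used. Once that is in place, the annihilation of the $\Phi_{\bullet,\circ}$ on $\lambda$ is inherited directly from the singleton case already settled in Lemma \ref{le:Lbr Lbp 1}(a), because the $\Phi_{\bullet,\circ}$ associated with the pair $(i,k)$ are the same elements of $\D\DD_m$ whether we view them inside $\D(M_{i,k},\DD_m)$ or inside $\D(M_{I_\lambda^{\mathbf{r}}},\DD_m)$.
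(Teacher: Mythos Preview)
Your proof is correct and follows essentially the same approach as the paper's own proof, which is a one-line sketch invoking induction on $|I_\lambda^{\mathbf r}|$, Lemma \ref{le:Lbr Lbp 1}$(a)$, and the recursive version of Remark \ref{obs:simple simple} applied to the decomposition $M_{I_\lambda^{\mathbf r}}=M_{i,k}\oplus M_{I_\lambda^{\mathbf r}\setminus(i,k)}$. You have simply made explicit the identification $Z=W$ via Remark \ref{obs:Z igual W} and the passage through Theorem \ref{thm:simples W op V}, and you correctly observe that the elements $\Phi_{\bullet,\circ}$ lie in $\D\DD_m$ and hence act on $\lambda$ the same way regardless of the ambient double.
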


\begin{proof}
This follows by induction on the cardinal of $I_\lambda^{\mathbf r}$, using Lemma \ref{le:Lbr Lbp 1} $(a)$ 
and the recursive version of Remark \ref{obs:simple simple} with 
$M_{I_\lambda^{\mathbf r}}=M_{i,k}\oplus M_{I_\lambda^{\mathbf r}\setminus(i,k)}$. 
\end{proof}

Let $\lambda\in\Lambda\setminus\Lbo$. Using the decomposition 
$M_I=M_{I_\lambda^{\mathbf p}}\oplus M_{I_\lambda^{\mathbf r}}$ and the 
triangular decomposition \eqref{eq:triangular decomposition of DWV} associated with it, one may also 
consider the $\D(M_{I_\lambda^{\mathbf p}},M_{I_\lambda^{\mathbf r}},\dm)$-module
given by the description \eqref{eq:verma prima}, this is  
$$
\fM^{I_\lambda^{\mathbf p}}_{I_\lambda^{\mathbf r}}(\fL^{I_\lambda^{\mathbf r}}(\lambda))
= \fM^{I_\lambda^{\mathbf p}}_{I_\lambda^{\mathbf r}}(\lambda) =
\D(M_{I_\lambda^{\mathbf p}},M_{I_\lambda^{\mathbf r}},\dm)\ot_{\D(M_{I_\lambda^{\mathbf r}},\dm)\ot\BV(\overline{M_{I_\lambda^{\mathbf p}}})}\,\lambda.
$$

Then, $\fM^{I_\lambda^{\mathbf p}}_{I_\lambda^{\mathbf r}}(\lambda)$ admits a simple quotient $\fL^{I_\lambda^{\mathbf p}}_{I_\lambda^{\mathbf r}}(\lambda)$ 
which is isomorphic to $\fL^{I}(\lambda)$ by Theorem \ref{thm:simples W op V}.
In particular, there exists an
epimorphism of $\D(M_I)$-modules
\begin{align}\label{eq:epi Lbr Lbp}
\fM^{I_\lambda^{\mathbf p}}_{I_\lambda^{\mathbf r}}(\lambda)\twoheadrightarrow\fL^{I}(\lambda).
\end{align}

One could use the recursive version of Remark \ref{obs:verma simple} to prove that it is actually an isomorphism,
by showing that $\fM^{I_\lambda^{\mathbf p}}_{I_\lambda^{\mathbf r}}(\lambda) = \fL^{I_\lambda^{\mathbf p}}_{I_\lambda^{\mathbf r}}(\lambda)$ is simple. To do so, 
it would be enough to compute the element $\Theta$ for $M_{I_\lambda^{\mathbf p}}$. 
To avoid the long computation, we prove the 
simplicity of $\fM^{I_\lambda^{\mathbf p}}_{I_\lambda^{\mathbf r}}(\lambda)$
by induction on 
$|I_\lambda^{\mathbf p}|$, using the $\Theta$ already computed in the singleton case.

\begin{lema}\label{le:Lbr Lbp}
Let $\lambda\in\Lambda\setminus\Lbo$. Then $\fL^{I}(\lambda)\simeq \fM^{I_\lambda^{\mathbf p}}_{I_\lambda^{\mathbf r}}(\lambda)$ as $\D(M_I)$-modules.
In particular, 
$\fL^{I}(\lambda)\simeq\BV(M_{I_\lambda^{\mathbf p}})\ot_{\D\DD_m}\lambda$ as 
$\BV(M_{I_\lambda^{\mathbf p}})\#\D\DD_m$-modules.
\end{lema}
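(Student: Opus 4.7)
The plan is to reduce the first assertion to showing that a certain Verma module is already simple, and then to establish this by induction on $r=|I_\lambda^{\mathbf p}|$ using the recursive version of Remark~\ref{obs:verma simple}.

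Combining Theorem~\ref{thm:simples W op V} applied to the decomposition $M_I=M_{I_\lambda^{\mathbf p}}\oplus M_{I_\lambda^{\mathbf r}}$ with Lemma~\ref{le:simple simple MIrlambda} yields $\fL^{I}(\lambda)\simeq \fL^{I_\lambda^{\mathbf p}}(\fL^{I_\lambda^{\mathbf r}}(\lambda))=\fL^{I_\lambda^{\mathbf p}}(\lambda)$, so it suffices to prove $\fL^{I_\lambda^{\mathbf p}}(\lambda)\simeq \fM^{I_\lambda^{\mathbf p}}(\lambda)$. I induct on $r$. The base case $r=0$ is immediate since then $\fM^{\emptyset}(\lambda)=\lambda$.

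For the inductive step $r\geq 1$, pick $(i,k)\in I_\lambda^{\mathbf p}$, set $I'=I\setminus\{(i,k)\}$ and observe that $(I')_\lambda^{\mathbf p}=I_\lambda^{\mathbf p}\setminus\{(i,k)\}$ has cardinality $r-1$. The decomposition $M_I=M_{i,k}\oplus M_{I'}$ is admissible for the recursive framework by Remark~\ref{obs:Z igual W -- homogeneous}, so Theorem~\ref{thm:simples W op V} gives $\fL^{I}(\lambda)\simeq \fL^{\{(i,k)\}}(\fL^{I'}(\lambda))$, which by construction is the head of the Verma module $\fM^{\{(i,k)\}}(\fL^{I'}(\lambda))$. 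By the recursive version of Remark~\ref{obs:verma simple} applied with $V=M_{i,k}$ over the base $\BV(M_{I'})\#\Bbbk\dm$, this Verma is simple provided the element $\Theta_{i,k}\in\D\dm$ from \eqref{eq:Phi} associated to the summand $M_{i,k}$ acts non-trivially on some vector of $\fL^{I'}(\lambda)$. But $\lambda$ is contained in $\fL^{I'}(\lambda)$ as its homogeneous degree-$0$ component, and the computation in the proof of Lemma~\ref{le:Lbr Lbp 1}$(b)$ shows that $\Theta_{i,k}\cdot \lambda\neq 0$ precisely because $\lambda\in\Lbp_{i,k}$. Hence the Verma is simple.

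Invoking the inductive hypothesis $\fL^{I'}(\lambda)\simeq \fM^{I_\lambda^{\mathbf p}\setminus\{(i,k)\}}(\lambda)$ together with Lemma~\ref{le:M-composicion}, which composes two successive Verma functors into a single one, yields
\begin{equation*}
\fL^{I}(\lambda)\simeq \fM^{\{(i,k)\}}\bigl(\fM^{I_\lambda^{\mathbf p}\setminus\{(i,k)\}}(\lambda)\bigr)\simeq \fM^{I_\lambda^{\mathbf p}}(\lambda),
\end{equation*}
which completes the induction. The second assertion then follows by unraveling the triangular decomposition \eqref{eq:triangular decomposition of DWV}, identifying $\fM^{I_\lambda^{\mathbf p}}(\lambda)$ with $\BV(M_{I_\lambda^{\mathbf p}})\otimes\lambda$ as a $\BV(M_{I_\lambda^{\mathbf p}})\#\D\dm$-module. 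The main care needed, I expect, is bookkeeping across the successive recursive decompositions: one must verify that the instance of $\Theta_{i,k}$ arising from Remark~\ref{obs:verma simple} in the recursive setting coincides with the element already computed in \S\ref{subsec:MIsinglecase}, which holds because $\Theta_{i,k}\in\D\dm$ depends only on the summand $M_{i,k}$ and lives inside every relevant base subalgebra, so that its non-triviality on $\lambda\subseteq\fL^{I'}(\lambda)$ suffices to trigger simplicity at each step.
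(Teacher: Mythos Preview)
Your proof is correct and follows essentially the same approach as the paper: induction on $|I_\lambda^{\mathbf p}|$, peeling off one summand $M_{i,k}$ via Theorem~\ref{thm:simples W op V}, invoking the recursive version of Remark~\ref{obs:verma simple} with the element $\Theta$ already computed in Lemma~\ref{le:Lbr Lbp 1}$(b)$ to conclude simplicity of the intermediate Verma, and finally collapsing the composed Vermas with Lemma~\ref{le:M-composicion}. The only organizational difference is that you front-load the reduction $\fL^{I}(\lambda)\simeq\fL^{I_\lambda^{\mathbf p}}(\lambda)$ via Lemma~\ref{le:simple simple MIrlambda}, whereas the paper absorbs this into the base case; but the inductive step you then run is exactly the paper's, with $I'=J$.
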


\begin{proof}
We proceed by induction on the cardinal $|I_\lambda^{\mathbf p}|$. 
If $I_\lambda^{\mathbf p}=\emptyset$, this is Lemma \ref{le:simple simple MIrlambda}.

Suppose $|I_\lambda^{\mathbf p}|\geq 1$. Let $(i,k)\in I_\lambda^{\mathbf p}$ and set $J=I\setminus\{(i,k)\}$. 
If $J=\emptyset $, then $I = \{(i,k)\} = I_\lambda^{\mathbf p}$, so
$\fM^{I_\lambda^{\mathbf p}}(\lambda) \simeq \fM^{I}(\lambda)$, and by Lemma \ref{le:Lbr Lbp 1} $(b)$ it follows that 
$\fM^{I_\lambda^{\mathbf p}}(\lambda) = \fL^{I}(\lambda)$.
Hence, we may assume that $J\neq \emptyset $. Since $|J_\lambda^{\mathbf p}| < |I_\lambda^{\mathbf p}|$, 
by the inductive hypothesis we have that
$$
\fL^J(\lambda)\simeq \fM^{J_\lambda^{\mathbf p}}_{J_\lambda^{\mathbf r}}(\lambda) = \fM^{J_\lambda^{\mathbf p}}_{J_\lambda^{\mathbf r}}(\fL^{J_\lambda^{\mathbf r}}(\lambda)) =
\D(M_J,\dm)\ot_{\D(M_{J_\lambda^{\mathbf r}},\dm)\ot\BV(\overline{M_{J_\lambda^{\mathbf p}}})}\,\lambda.
$$

Consider the generators $v_{\pm} \in M_{i,k}$ and $\alpha_{\pm} \in \overline{M_{i,k}}$ of $\D(M_I,\dm)$ 
defined as in \S\ref{subsec:drinfeld-double-dm}. 
Then, the commuting relations \eqref{eq:a+v+} -- \eqref{eq:a-v-} hold in $\D(M_I)$, 
and we may consider the corresponding
element $\Theta$ as in \eqref{eq:Phi} satisfying \eqref{eq:discriminante mik}. 

Furthermore, the action of $\Theta$ on $\fL^{J}(\lambda)$ 
is non-trivial because,
by the proof of Lemma \ref{le:Lbr Lbp 1} $(b)$, the action of $\Theta$ on 
$\lambda\subset\fL^J(\lambda)$ is not so. 
Then, by Theorem \ref{thm:simples W op V}  and the recursive version of Remark \ref{obs:verma simple} 
with $W=M_{i,k}$ and $U=M_J$, we have that
\begin{align*}
\fL^{I}(\lambda) \simeq \fL^{(i,k)}_J(\fL^{J}(\lambda)) \simeq \fM^{(i,k)}_J(\fL^{J}(\lambda)) &\simeq 
 \fM^{(i,k)}_{J_\lambda^{\mathbf p}}(\fM^{J_\lambda^{\mathbf p}}(\lambda))\simeq\fM^{I_\lambda^{\mathbf p}}(\lambda),
\end{align*}
where the last isomorphism follows by Lemma \ref{le:M-composicion}.
\end{proof}

The following lemma gives the description of $\fL^{I}(\lambda)$ for $\lambda\in \Lbo$. With it, we finish 
the proof of Theorem \ref{thm:simple-D(M_I)-modules}. Its
proof also relies on the recursive argument in \S\ref{subsec:decomposable}. 
More explicitly, this fits in the situation of Remark \ref{obs:Z igual W -- homogeneous} and \S\ref{subsub:application 2n}.

\begin{lema}\label{le:LMrst semisimple}
Let $\lambda=M_{r,s,t}\in\Lbo$. Then, as graded $\D\DD_m$-modules
$$
\Res\big(\fL^{I}(\lambda)\big)\simeq\bigoplus_{J\subseteq I}M_{r+i_J,s,t+k_J}[-|J|],
$$
where $i_J=\sum_{(i,k)\in J}i$ and $k_J=\sum_{(i,k)\in J}k$, with $i_{J}=0=k_{J}$ if $J=\emptyset$.
\end{lema}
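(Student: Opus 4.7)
The plan is to proceed by induction on $|I|$, with the base case $|I| = 1$ being exactly Lemma \ref{le:simples Lbo}. For the inductive step, fix $(i_0, k_0) \in I$, set $J_0 = I \setminus \{(i_0, k_0)\}$, $W = M_{i_0, k_0}$ and $U = M_{J_0}$, so that $M_I = W \oplus U$. The orthogonality relations \eqref{eq:relations exterior} yield $c_{U, W} \circ c_{W, U} = \id_{W \ot U}$, so by Remark \ref{obs:Z igual W} we are in the setting of Remark \ref{obs:Z igual W -- homogeneous}: $W$ is rigid as a $\D(U, \dm)$-module, and by Theorem \ref{thm:simples W op V}, $\fL^I(\lambda)$ coincides with the head of the recursive Verma $\fM^W_{\BV(U)\#\ku\dm}(\fL^{J_0}(\lambda))$ as a $\D(M_I, \dm)$-module.

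I would then apply the three-case analysis of \S\ref{subsub:application 2n} to identify which $\D(U, \dm)$-summands of this recursive Verma survive in the head. By Lemma \ref{le:mik ot mrst} we have $W \ot \lambda \simeq \mu \oplus \mu'$ with $\mu = M_{r+i_0, s+1, t+k_0}$ and $\mu' = M_{r+i_0, s, t+k_0}$; replaying the calculation \eqref{eq: alpha+ on -1} in the recursive setting (where $\oW$ acts trivially on $\fL^{J_0}(\lambda)$ inside the recursive Verma by construction, so the same computation applies verbatim) shows that $\oW \cdot \mu = 0$ whereas $\oW \cdot \mu' \neq 0$. We are thus in the case $\deg \fL^{J_0}(\mu) = -1$ of \S\ref{subsub:application 2n}, and the socle of the recursive Verma equals $\fL^{J_0}(\mu)[-1] \oplus \fL^{J_0}(\lambda_W \lambda)[-2]$, where by Lemma \ref{le:chi2 ot x}, $\lambda_W \lambda \simeq M(e, \chi_2) \ot M_{r,s,t} \simeq M_{r, s+1, t}$.

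Quotienting by this socle gives, as $\D(U, \dm)$-modules,
\begin{align*}
\fL^I(\lambda) \simeq \fL^{J_0}(M_{r, s, t})[0] \oplus \fL^{J_0}(M_{r+i_0, s, t+k_0})[-1].
\end{align*}
Both highest-weights on the right lie in $\Lbo$ with the same $s$-parameter as $\lambda$, so the inductive hypothesis applies to each; restricting further to $\ku\dm$ and accounting for the degree shifts yields
\begin{align*}
\Res \fL^I(\lambda) \simeq \bigoplus_{J \subseteq J_0} M_{r + i_J, s, t + k_J}[-|J|] \,\oplus\, \bigoplus_{J \subseteq J_0} M_{r + i_0 + i_J, s, t + k_0 + k_J}[-|J| - 1].
\end{align*}
The natural bijection $\{J' \subseteq I\} \leftrightarrow \{J \subseteq J_0\} \sqcup \{J \cup \{(i_0, k_0)\} : J \subseteq J_0\}$ identifies this with $\bigoplus_{J' \subseteq I} M_{r + i_{J'}, s, t + k_{J'}}[-|J'|]$, completing the induction. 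The main subtlety is justifying \eqref{eq: alpha+ on -1} in the recursive setting so as to pin down precisely that $\mu$ (and not $\mu'$) is the summand feeding the socle at each inductive step; once that is secured, the rest is combinatorial bookkeeping with Lemmas \ref{le:mik ot mrst} and \ref{le:chi2 ot x}.
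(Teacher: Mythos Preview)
Your proposal follows essentially the same route as the paper's proof: induction on $|I|$, the recursive Verma $\fM^{(i_0,k_0)}(\fL^{J_0}(\lambda))$ via Remark \ref{obs:Z igual W -- homogeneous} and Theorem \ref{thm:simples W op V}, the tensor decompositions from Lemmata \ref{le:mik ot mrst} and \ref{le:chi2 ot x}, and the computation \eqref{eq: alpha+ on -1} to single out the summand $M_{r+i_0,s+1,t+k_0}$ feeding the socle.

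There is one step you pass over that the paper makes explicit. You write ``quotienting by this socle gives $\fL^I(\lambda)\simeq\ldots$'', which presumes that the recursive Verma has exactly two composition factors, i.e.\ that $\fL^I(\lambda)=\fM/\fS$. Showing $\oW\cdot\mu=0$ rules out the simple-Verma case, but it does \emph{not} by itself rule out the possibility that the socle is only the degree $-2$ piece $\fL^{J_0}(M_{r,s+1,t})$ (with $\mu$ then a highest-weight above it but not in the socle), nor that the Verma has length $>2$. The paper closes this by checking, via Lemma \ref{le:Lbr Lbp 1}, that some $\Phi_{\pm\pm}$ acts non-trivially on $\lambda$ (so $\fL^I(\lambda)\not\simeq\fL^{J_0}(\lambda)$, ruling out the rigid case) and that $\Theta$ acts trivially on every weight of $\fL^{J_0}(\lambda)$ (so $\fL^I(\lambda)\not\simeq\fM^{(i_0,k_0)}(\fL^{J_0}(\lambda))$, ruling out the simple-Verma case); a counting argument then forces exactly two composition factors. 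Once you insert this check, your argument is complete and matches the paper's.
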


\begin{proof}
We proceed by induction on $|I|$. If $|I|=1$, this is Lemma \ref{le:simples Lbo}.

Assume now $I=\{(i,k)\}\cup E$, then  $M_{I} = M_{i,k} \oplus M_{E}$. 
Following the strategy on \S\ref{subsub:application 2n}, taking $U=M_{E}$ and $W=M_{i.k}$, we 
describe $\fL^{I}(\lambda)$ as a quotient of the induced module
$$
\fM^{(i,k)}_E(\fL^{E}(\lambda)) = \fM^{(i,k)}_{\BV(M_{E})\#\ku\dm}(\fL^{E}(\lambda))= \D(M_{i,k}\oplus M_{E})\ot_{\D(M_{E})\ot \BV(\overline{M_{i,k}})}\fL^{E}(\lambda).
$$
Moreover, by the recursive version of \eqref{eq:verma iso}, we have
an isomorphism of $\D(M_{E})$-modules (here $\Res$ is the restriction functor to $\D(M_{E})$)
\begin{align}\label{eq:auxiliar 2 LMrst semisimple}
\Res\big(\fM^{(i,k)}_E(\fL^{E}(\lambda))\big) \simeq\bigoplus_{j=0}^2\BV^{j}(M_{i,k})\ot\fL^{E}(\lambda).
\end{align}

We first show that $\fL^{I}(\lambda)$ is not isomorphic to neither $\fL^{E}(\lambda)$ nor $\fM^{(i,k)}_E(\fL^{E}(\lambda))$. 
To do this, we consider the generators $v_{\pm}\in M_{i,k}$ and 
$\alpha_{\pm} \in \overline{M_{i,k}}$ of $\D(M_I)$ as in \S\ref{subsec:drinfeld-double-dm}. Thus, the commuting relations
\eqref{eq:a+v+} -- \eqref{eq:a-v-} hold and the corresponding element $\Theta$ as in \eqref{eq:Phi} satisfies \eqref{eq:discriminante mik}. 
Now, the action of some $\Phi_{\pm,\pm}$ is non-zero on $\lambda$ by the proof of Lemma \ref{le:Lbr Lbp 1} $(a)$ 
and hence $\fL^{I}(\lambda)\not\simeq\fL^{E}(\lambda)$ by the recursive version of Remark \ref{obs:simple simple}. 
Also, by the inductive hypothesis and Lemma \ref{le:mik ot mrst}, $\fL^{E}(\lambda)$ is the direct sum of weights in $\Lbo$. 
By Lemma \ref{le:Lbp i k}, the action of $\Theta$ on them is trivial and hence $\fL^{I}(\lambda)\not\simeq\fM^{(i,k)}_E(\fL^{E}(\lambda))$ 
by the recursive version of
Remark \ref{obs:verma simple}. 

We next analyse the structure of $\fM^{(i,k)}_E(\fL^{E}(\lambda))$ as $\D(M_{E})$-module, similarly as we did in Lemma \ref{le:simples Lbo}. See Figure \ref{fig:M lambda Mik decomposable}.

\begin{claim}
$\fM^{(i,k)}_E(\fL^{E}(\lambda))$ is semisimple as $\D(M_{E})$-module and  
\begin{align*}
\Res\big(\fM^{(i,k)}_E(\fL^{E}(\lambda))\big)\simeq&\ \fL^{E}(\lambda)\oplus
\fL^{E}(M_{r+i,s+1,t+k})\oplus\fL^{E}(M_{r+i,s,t+k})\oplus\fL^{E}(M_{r,s+1,t}),
\end{align*}
where the gradation induced by the new triangular decomposition is
\begin{align*}
\Res\big(\fM^{(i,k)}_E(\fL^{E}(\lambda))\big)_{0} &= \fL^{E}(\lambda)=  \fL^{E}(M_{r,s,t}),\\
\Res\big(\fM^{(i,k)}_E(\fL^{E}(\lambda))\big)_{-1} & = \fL^{E}(M_{r+i,s+1,t+k})\oplus\fL^{E}(M_{r+i,s,t+k})\\ 
\Res\big(\fM^{(i,k)}_E(\fL^{E}(\lambda))\big)_{-2} & = \fL^{E}(M_{r,s+1,t}). 
\end{align*}
\end{claim}

Indeed, $\BV^{0}(M_{i,k}) =\Bbbk \simeq M(e,\chi_1)$, $\BV^{1}(M_{i,k}) = M_{i,k}$
and $\BV^2(M_{i,k})\simeq M(e,\chi_2)$ are all rigid simple $\D(M_E)$-modules by the inductive hypotheses applied to $E$.
Then Theorem \ref{thm:rigid tensor} implies that
\begin{align*}
\BV^{0}(M_{i,k})\ot\fL^E(\lambda)&\simeq\fL^E(\lambda),\\
\BV^{1}(M_{i,k})\ot\fL^E(\lambda)&
\simeq\fL^E(M_{r+i,s+1,t+k})\oplus\fL^E(M_{r+i,s,t+k})\quad\mbox{(see Lemma \ref{le:mik ot mrst}),}\\
\BV^2(M_{i,k})\ot\fL^E(\lambda)&\simeq\fL^E( M_{r,s+1,t})\quad\mbox{(see Lemma \ref{le:chi2 ot x})}
\end{align*}
as $\D(M_E)$-modules. Therefore the claim follows from \eqref{eq:auxiliar 2 LMrst semisimple}.

\begin{figure}[!h]
\begin{tikzpicture}    

\draw[dotted] (-4.5,1) -- (5.9,1) node at (6.35,1) {$0$};

\draw[dotted] (-4.5,0) --  (6.25,0) node[fill=white] at (6.25,0) {$-1$} ;

\draw[dotted] (-4.5,-1) -- (6.25,-1) node[fill=white] at (6.25,-1) {$-2$};

\draw[->] (-3,.75) to [out=18,in=162]
 node [midway,above,sloped] {$\ot\fL^{E}(M_{r,s,t})$} (2,.75);

\node at (-6.35,0) {};

\begin{scope}[xshift=-3.5cm]

\node[circle,fill=black,inner sep=0pt, minimum width=5pt,label=above:{$\ku$}] at (0,1) {};

\node[circle,fill=black,inner sep=0pt, minimum width=5pt,label={[fill=white]left:$M_{i,k}$}] at (0,0) {};

\node[circle,fill=black,inner sep=0pt, minimum width=5pt,label=below:{$M(e,\chi_2)$}] at (0,-1) {};
\end{scope}

\begin{scope}[xshift=2.5cm,xscale=.6]

\fill[fill=gray!50] 
(-.15,-1.15) to [out=315,in=225] 
(.15,-1.15) to
[out=45,in=-45] 
(.15,-.85) --
(-.85,.15) to
[out=135,in=45]
(-1.15,.15) to
[out=225,in=135]
(-1.15,-.15) --
(-.15,-1.15);

\node[circle,fill=black,inner sep=0pt, minimum width=5pt,label=above:{$\fL^E(M_{r,s,t})$}] at (0,1) {};

\node[circle,fill=black,inner sep=0pt, minimum width=5pt,label={[fill=white]right:$\fL^E(M_{r+i,s,t+k})$}] at (1,0) {};

\node[circle,fill=black,inner sep=0pt, minimum width=5pt,label={[fill=white,,label distance=4pt]left:$\fL^E(M_{r+i,s+1,t+k})$}] at (-1,0) {};

\node[circle,fill=black,inner sep=0pt, minimum width=5pt,label=below:{$\fL^E(M_{r,s+1,t})$}] at (0,-1) {};
\end{scope}

\end{tikzpicture}
\caption{The dots represent the simple $\D(M_E)$-summands of $\BV(M_{i,k})$ and 
$\fM^{(i,k)}_E(\fL^{E}(M_{r,s,t}))$. 
Their degrees are indicated on the right. Those in the shadow region form its socle and the others its head which is isomorphic to $\fL^{(i,k)}(M_{r,s,t})$.}
\label{fig:M lambda Mik decomposable}
\end{figure}

Being a quotient of $\fM^{(i,k)}_E(\fL^{E}(\lambda))$, $\fL^{I}(\lambda)$ is isomorphic to a direct sum of some of the simple 
$\D(M_{E})$-modules in the claim above. 

\begin{claim}\label{claim:verma LMrst semisimple}
As graded $\D(M_{E})$-modules, we have that 
\begin{align}\label{eq:final mik}
\Res\big(\fL^{I}(\lambda)\big)\simeq \fL^{E}(M_{r,s,t})\oplus
\fL^{E}(M_{r+i,s,t+k})[-1]
\end{align}
with the gradation induced by the new triangular decomposition.
\end{claim} 

Indeed, the claim follows by a counting argument similar to the one in the proof of Lemma \ref{le:simples Lbo}.
Using the decomposition \eqref{eq:auxiliar 2 LMrst semisimple}, one can
deduce that
$\Res\big(\fL^{I}(\lambda)\big)$ is isomorphic to $\fL^{E}(\lambda)\oplus
\fL^{E}(M_{r+i,\tilde s,t+k})[-1]$ as $\D(M_{E})$-module with either $\tilde{s}=s$ or $\tilde{s}=s+1$.
To determine which $\tilde{s}$ is, we can argue as in the proof of Lemma \ref{le:simples Lbo}, see \eqref{eq: alpha+ on -1}. 
Namely, it must be the simple $\D(M_{E})$-module over which the action of $\overline{M_{i,k}}$ is non-trivial. 
As in \eqref{eq: alpha+ on -1}, we see that $\overline{M_{i,k}}$ acts by zero on the weight $M_{r+i,s+1,t+k}$ 
and its action is non-zero on $M_{r+i,s,t+k}$. Then $\overline{M_{i,k}}$ must act trivially over the simple 
$\D(M_E)$-submodule generated by the former weight, 
that is $\fL^{E}(M_{r+i,s+1,t+k})$, and hence $\tilde{s}=s$.

Finally, the lemma follows by \eqref{eq:final mik} and the inductive hypothesis.
\end{proof}


\end{document}